\theoremstyle{plain}
\newtheorem{theorem}[subsection]{{\bf Theorem}}
\newtheorem*{theorem*}{{\bf Theorem}}
\newtheorem{corollary}[subsection]{{\bf Corollary}}
\newtheorem*{corollary*}{{\bf Corollary}}
\newtheorem{proposition}[subsection]{{\bf Proposition}}
\newtheorem{lemma}[subsection]{{\bf Lemma}}
\theoremstyle{definition}
\theoremstyle{remark}
\newtheorem{remark}[subsection]{{\it Remark}}
\newtheorem{example}[subsection]{{\it Example}}
\numberwithin{equation}{subsection}
\DeclareMathOperator{\im}{im}
\DeclareMathOperator{\HH}{H}
\DeclareMathOperator{\K}{K}
\DeclareMathOperator{\B}{B}
\DeclareMathOperator{\Bt}{\tilde{B}}
\DeclareMathOperator{\E}{E}
\DeclareMathOperator{\M}{M}
\DeclareMathOperator{\St}{St}
\DeclareMathOperator{\Hom}{Hom}
\DeclareMathOperator{\GL}{GL}
\DeclareMathOperator{\Frat}{Frat}
\DeclareMathOperator{\res}{res}
\DeclareMathOperator{\cor}{cor}
\DeclareMathOperator{\tra}{tra}
\newcommand{\QZ}{\mathbb{Q}/\mathbb{Z}}
\newcommand{\Z}{\mathbb{Z}}
\newcommand{\cwedge}{\curlywedge}
\newcommand{\dwedge}{\tilde{\wedge}}
\begin{document}
\title[Unramified Brauer groups]
{Unramified Brauer groups of finite and infinite groups}
\author{Primo\v z Moravec}
\address{{
Department of Mathematics \\
University of Ljubljana \\
Jadranska 21 \\
1000 Ljubljana \\
Slovenia}}
\email{primoz.moravec@fmf.uni-lj.si}
\dedicatory{In loving memory of my father}
\subjclass[2000]{}
\keywords{}
\thanks{The author is indebted to
Ming-chang Kang and Boris Kunyavski\u\i \ for sending him their preprint \cite{Chu09}.
He would also like to thank the referees for their comments.}
\date{\today}
%%%%%%%%%%%%%%%%%%%%%%%%%%%%%%%%%%%%%%%%%%%%%%%%%%%%%%%%%%%%%%%%%%%%%
\begin{abstract}
\noindent
The Bogomolov multiplier is a group theoretical invariant isomorphic to
the unramified Brauer group of a given quotient space.
%, and represents an obstruction to the problem of stable rationality.
We derive a homological version of the Bogomolov multiplier, prove a Hopf-type formula, find a five term exact sequence corresponding
to this invariant, and describe the role of the Bogomolov multiplier in the theory of central extensions. A new description of the Bogomolov
multiplier of a nilpotent group of class two is obtained. 
We define the Bogomolov multiplier within K-theory and show that proving its triviality is equivalent
to solving a long-standing problem posed by Bass. An algorithm for computing the Bogomolov multiplier is developed.
\end{abstract}
%%%%%%%%%%%%%%%%%%%%%%%%%%%%%%%%%%%%%%%%%%%%%%%%%%%%%%%%%%%%%%%%%%%%%%%
\maketitle
%%%%%%%%%%%%%%%%%%%%%%%%%%%%%%%%%%%%%%%%%%%%%%%%%%%%%%%%%%%%%%%%%%%%%%%
\section{Introduction}
\label{s:intro}

\noindent
In this paper we develop a homological version of a group theoretical invariant that has served as one of the main tools in studying
the problem of stable rationality of quotient spaces.
Let $G$ be a finite group and $V$ a faithful representation of $G$ over $\mathbb{C}$. Then there is a natural action of $G$ upon
the field of rational functions $\mathbb{C}(V)$. A problem posed by Emmy Noether \cite{Noe16} asks as to whether 
the field of $G$-invariant functions $\mathbb{C}(V)^G$ is purely transcendental over $\mathbb{C}$, i.e.,
whether the quotient space $V/G$ is {\it rational}. A question related to the above mentioned
is whether $V/G$ is {\it stably rational}, that is,
whether there exist independent variables $x_1,\ldots ,x_r$ such that $\mathbb{C}(V)^G(x_1,\ldots ,x_r)$ becomes a pure
transcendental extension of $\mathbb{C}$. This problem has close connection
with L\"{u}roth's problem \cite{Saf91} and the inverse Galois problem \cite{Swa83,Sal84}.
By Hilbert's Theorem 90 stable rationality of $V/G$
does not depend upon the choice of $V$, but only on the group $G$. Saltman \cite{Sal84} found examples of groups $G$ such
that $V/G$ is not stably rational over $\mathbb{C}$. His main method was application of the unramified cohomology group
$\HH^2_{\rm nr}(\mathbb{C}(V)^G,\QZ )$ as an obstruction. 
A version of this invariant had been used before by Artin and Mumford \cite{Art72} who constructed unirational varieties over
$\mathbb{C}$ that were not rational.
Bogomolov \cite{Bog88} further explored this cohomology group. He proved 
that $\HH^2_{\rm nr}(\mathbb{C}(V)^G,\QZ )$ is canonically isomorphic to a certain subgroup $\B_0(G)$ (defined in Section
\ref{s:brauer}) of the {\it Schur multiplier} $\HH ^2(G,\QZ )$ of $G$. Kunyavski\u\i ~\cite{Kun08} coined the term the
{\it Bogomolov multiplier} of $G$ for the group $\B_0(G)$.
Bogomolov used the above description to find
new examples of groups with $\HH^2_{\rm nr}(\mathbb{C}(V)^G,\QZ )\neq 0$. Subsequently,
Bogomolov, Maciel and Petrov \cite{Bog04} showed that $\B_0(G)=0$ when $G$ is a finite simple group of Lie type $A_\ell$, whereas
Kunyavski\u\i \ \cite{Kun08} recently proved that $\B_0(G)=0$ for every quasisimple or almost simple group $G$. 
Bogomolov's conjecture
that $V/G$ is stably rational over $\mathbb{C}$ for every finite simple group $G$, nevertheless, still remains open.

\smallskip

We first observe
that if $G$ is a finite group, then $\B_0(G)$ is canonically isomorphic to $\Hom (\Bt_0(G),\QZ )$, where the group $\Bt_0(G)$ can be
described as a section of the {\it nonabelian exterior square} $G\wedge G$ of the group $G$. The latter appears implicitly in Miller's
work \cite{Mil52}, and was further developed by Brown and Loday \cite{Bro87}.
Let $\gamma _2(G)$ be the derived subgroup of $G$, and denote the kernel of the 
commutator homomorphism $G\wedge G\to\gamma _2(G)$ by $\M (G)$. 
Miller \cite{Mil52} proved that there is a natural isomorphism between $\M(G)$ and $\HH _2(G,\mathbb{Z})$. Using this description, we prove that
$\Bt_0(G)=\M(G)/\M_0(G)$, where $\M_0(G)$ is the subgroup of $\M (G)$ generated by all $x\wedge y$ such that $x,y\in G$ commute. In the finite
case, $\Bt_0(G)$ is thus (non-canonically) isomorphic to $\B_0(G)$.  The functor $\Bt_0$ can be studied within the category of all groups, and
this is the main goal of the paper. In the first part we prove a Hopf-type formula for $\Bt_0(G)$ 
by showing that if $G$ is given by a free presentation $G=F/R$, then
$$\Bt_0(G)\cong \frac{\gamma _2(F)\cap R}{\langle \K (F)\cap R\rangle},$$
where $\K (F)$ denotes the set of commutators in $F$. A special case of this was implicitly used before
by Bogomolov \cite{Bog88}, and Bogomolov, Maciel and Petrov \cite{Bog04}.
With the help of the above formula we derive a five term exact sequence
$$\Bt_0(G)\longrightarrow \Bt_0(G/N)\longrightarrow\frac{N}{\langle \K(G)\cap N]\rangle}\longrightarrow 
G^{\rm ab}\longrightarrow
(G/N)^{\rm ab}\longrightarrow 0,$$
where $G$ is any group and $N$ a normal subgroup of $G$. This is a direct analogue of the well-known five term homological sequence.
By applying Kunyavski\u\i 's work and the above sequence we obtain the following group theoretical result: 
If $G$ is a finite group and $S$ its solvable radical,
then $S\cap \gamma _2(G)=\langle S\cap \K(G)\rangle$. Furthermore, we compute $\Bt _0(G)$ when $G$ is a finite group that is a 
split extension. This corresponds to a well-known result of Tahara \cite{Tah72} who computed the Schur multiplier of semidirect
product of groups (see also \cite{Kar87}). In particular, we obtain a closed formula for $\B_0(G)$ when $G$ is a Frobenius group.

\smallskip

In his paper \cite{Bog88}, Bogomolov extended the definition of $\B_0(G)$ to cover all algebraic groups $G$, cf. Section \ref{s:brauer}. This
can be further extended in a natural way to cover all infinite groups. We prove here that if $G$ is any group, then $\B_0(G)$ is canonically
isomorphic to $\Hom (\Bt_{0\mathcal F}(G),\QZ )$, where $\Bt_{0\mathcal F}(G)$ is the quotient of the subgroup $\M_{\mathcal{F}}(G)$ of 
$\HH_2(G,\Z )$ generated by all images of corestriction maps $\cor_G^H:\HH _2(H,\Z )\to \HH_2(G,\Z)$, where $H$ runs through
all finite subgroups of $G$, by the subgroup $ \M_{0\mathcal{F}}(G)$ generated by all $\im\cor_G^A$, where $A$ runs through all finite abelian
subgroups of $G$. As a consequence we show that if $G$ is a locally finite group, then $\Bt_0(G)\cong\Bt_{0\mathcal{F}}(G)$. This in particular
applies to periodic linear groups. On the other hand, $\Bt_0(G)$ and $\Bt_{0\mathcal{F}}(G)$ may fail to be isomorphic in general.

\smallskip

One of the goals of the paper is to exhibit the role of $\Bt_0(G)$ in studying certain types of central extensions of $G$. This is motivated by the 
classical theory of Schur multipliers which are the cornerstones of the extension theory of groups.
We define $G\cwedge G$ to be the quotient of $G\wedge G$ by $\M_0(G)$. Then it is clear that the sequence
$\Bt_0(G)\rightarrowtail G\cwedge G\twoheadrightarrow \gamma _2(G)$ is exact, therefore $\Bt_0(G)$ can be thought of as the obstruction to
$G\cwedge G$ being isomorphic to $\gamma _2(G)$. This corresponds to a result of Miller \cite{Mil52} who
demonstrated that the nonabelian exterior square $G\wedge G$ of a group $G$ fits into the short exact sequence
$\M(G)\rightarrowtail G\wedge G\twoheadrightarrow \gamma _2(G)$.
This construction enables us to prove that if $G$ is a finite group, then for every stem extension $(E,\pi ,A)$ producing $\Bt_0(G)$ we have that $\gamma _2(E)$ and $G\cwedge G$ are of the same order. Furthermore, there exists
a stem extension of this kind such that $\gamma _2(E)$ is actually isomorphic to $G\cwedge G$. This can be seen as a direct analogue
of the well-known fact that if $G$ is finite, then $G\wedge G$ is naturally isomorphic to the derived subgroup of an arbitrary covering group of $G$.
In addition to that, we prove that if $G$ is a perfect
group, then $G\cwedge G$ is universal within the class of central extensions $E$ of $G$ with the property that every commuting pair of elements
in $G$ has commuting lifts in $E$. Again, this corresponds to the fact that if 
$G$ is a perfect group, then $G\wedge G$ is the universal central extension of $G$.

The first known examples of finite groups $G$ with $\B_0(G)\neq 0$ were found among $p$-groups of class 2 \cite{Bog88,Sal84}. Bogomolov
obtained a description of $\B_0(G)$ when $G$ is a $p$-group of class 2 with $G^{\rm ab}$ elementary abelian. Here we obtain a description
of $\Bt_0(G)$ for any group $G$ that is nilpotent of class 2. More precisely, we show that 
$\Bt_0(G)\cong \ker (\HH_2(G^{\rm ab},\Z )\to\gamma _2(G))/\ker (\HH_2(G^{\rm ab},\Z )\to G\cwedge G)$. In the case when 
$G$ is a $p$-group of class 2 with $G^{\rm ab}$ elementary abelian, this can be further refined using the Blackburn-Evens theory
\cite{Bla79}.

\smallskip

The functor $\Bt_0$ has applications in K-theory. For a unital ring $\Lambda$ define $\Bt_0\Lambda =\Bt_0(\E (\Lambda ))$ where
$\E (\Lambda )$ is the subgroup of $\GL (\Lambda )$ generated by elementary matrices. We prove that $\Bt_0\Lambda$ is naturally
isomorphic to $\K_2\Lambda /\langle \K (\St (\Lambda )\cap \K_2\Lambda\rangle$, where $\St (\Lambda )$ is the Steinberg group. This is
related to a conjecture posed by Bass \cite[Problem 3]{Den73} that $\K_2\Lambda$ is always generated by the so-called Milnor elements.
We show that this problem has a positive solution for a ring $\Lambda$ if and only if $\Bt_0\Lambda$ is trivial. The latter is for instance true
for commutative semilocal rings. A possible approach towards solving Bass' problem could be based on the result that $\Bt_0\Lambda$ is naturally
isomorphic to $\Bt_0(\GL (\Lambda ))$. 

\smallskip

In general it is hard to compute $\B_0(G)$, due to its cohomological description. 
Chu, Hu, Kang, and Kunyavski\u\i ~\cite{Chu09} recently completed calculations of $\B_0(G)$ for all groups of order $\le 64$.
The homological nature of $\Bt_0$, on the other hand, allows machine computation of $\Bt_0(G)$ for polycyclic groups $G$.
There is an efficient algorithm developed recently by Eick and Nickel \cite{Eic08} for computing $G\wedge G$ in case $G$ is polycyclic.
Based on that we develop and implement an algorithm for computing $\Bt_0(G)$ for finite solvable groups $G$. We use this
algorithm to determine the Bogomolov multiplier of all solvable groups of order $\le 729$, apart 
from the orders 512, 576 and 640. Our computations in particular
show that there exist  three groups of order 243 with nontrivial unramified Brauer group. This contradicts a result of
Bogomolov \cite{Bog88} claiming that if $G$ is a finite $p$-group of order at most $p^5$, then $\B_0(G)=0$.
%%%%%%%%%%%%%%%%%%%%%%%%%%%%%%%%%%%%%%%%%%%%%%%%%%%%%%%%%%%%%%%%%%%%%%%
\section{Preliminaries and notations}
\label{s:prelim}

\noindent
In this section we fix some notations used throughout the paper.
Let $G$ be a group and $x,y\in G$.  We use the notation ${}^xy=xyx^{-1}$ for conjugation from the left. 
The commutator $[x,y]$ of elements $x$ and $y$ is defined by
$[x,y]=xyx^{-1}y^{-1}={}^xyy^{-1}$. If $H$ and $K$ are subgroups of $G$, then we
define $[H,K]=\langle [h,k]\mid h\in H, k\in K\rangle$. The commutator subgroup $\gamma _2(G)$ of $G$ is defined to
be the group $[G,G]$. The {\it set} 
$\{ [x,y]\mid x,y\in G \}$ of all commutators of $G$ is denoted by $\K(G)$.

We recall the definition and basic properties of the nonabelian exterior product of groups. The reader is referred to \cite{Bro87,Mil52} for more
thorough accounts on the theory and its generalizations.
Let $G$ be a group and $M$ and $N$ normal subgroups of $G$. We form the group $M\wedge N$, generated by the symbols
$m\wedge n$, where $m\in M$ and $n\in N$, subject to the following relations:
\begin{align}
\label{eq:tens1}
mm'\wedge n &= ({}^mm'\wedge {}^mn)(m\wedge n),\\
\label{eq:tens2}
m \wedge nn' &= (m\wedge n)({}^nm\wedge {}^nn'),\\
\label{eq:tens3}
x\wedge x &= 1,
\end{align}
for all $m,m'\in M$, $n,n'\in N$ and $x\in M\cap N$.

Let $L$ be a group. A function $\phi : M \times N \to L$ is called a {\it crossed
pairing} if for all $m, m' \in M$, $n, n' \in N$,
$\phi (mm', n) = \phi ({}^mm', {}^mn)\phi (m,n)$,
$\phi (m,nn') = \phi (m,n)\phi ({}^nm, {}^nn')$, and $\phi (x,x)=1$ for all $x\in M\cap N$.
A crossed pairing $\phi$ determines a unique homomorphism of groups 
$\phi ^*: M\wedge N\to L$ such that $\phi ^*(m\wedge n)=\phi (m,n)$
for all $m\in M$, $n\in N$.

The group $G\wedge G$ is said to be
the {\it nonabelian exterior square} of $G$.
By definition, the commutator map $\kappa :G\wedge G\to \gamma _2(G)$, given by
$g\wedge h\mapsto [g,h]$, is a well defined homomorphism of groups.
Clearly $\M(G)=\ker\kappa$ is central in $G\wedge G$, and
$G$ acts trivially via diagonal action on  $\M(G)$.
Miller \cite{Mil52} proved that there is a natural isomorphism between $\M(G)$ and $\HH _2(G,\mathbb{Z})$. A direct consequence
of this result is that if a group $G$ is given by a free presentation $G\cong F/R$, then $G\wedge G$ is naturally isomorphic to
$\gamma _2(F)/[R,F]$.

The following lemma collects some basic identities that hold in the nonabelian exterior square of a group:

\begin{lemma}[\cite{Bro87}]
\label{l:ext}
Let $G$ be a group and $x,y,z,w\in G$.
\begin{enumerate}
\item[(a)] $x\wedge y=(y\wedge x)^{-1}$.
\item[(b)] ${}^{x^{-1}}(x\wedge y)=y\wedge x^{-1}$.
\item[(c)] ${}^{[z,w]}(x\wedge y)=(z\wedge w)(x\wedge y)(z\wedge w)^{-1}$.
\end{enumerate}
\end{lemma}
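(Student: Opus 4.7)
The plan is to derive (a)--(c) directly from the three defining relations (\ref{eq:tens1})--(\ref{eq:tens3}), with (a) as the workhorse for the later parts. Two auxiliary identities used freely below follow immediately: $1 \wedge g = g \wedge 1 = 1$ (apply (\ref{eq:tens1}) to $(1 \cdot 1) \wedge g$, which forces $1 \wedge g$ to be an idempotent in a group), and $x \wedge x^{-1} = 1$ (apply (\ref{eq:tens2}) to $x \wedge (x \cdot x^{-1}) = x \wedge 1 = 1$ combined with (\ref{eq:tens3})).

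For (a), I would start from $1 = (xy) \wedge (xy)$, valid by (\ref{eq:tens3}), and expand the right copy of $xy$ using (\ref{eq:tens2}) as $((xy) \wedge x)\,({}^x(xy) \wedge {}^x y)$. Each piece simplifies via (\ref{eq:tens1}) together with the vanishing of diagonal terms: the first piece becomes ${}^x y \wedge x$, and the second (noting that ${}^x(xy) = x \cdot {}^x y$) becomes $x \wedge {}^x y$. The resulting equality $1 = ({}^x y \wedge x)(x \wedge {}^x y)$ says $u \wedge v = (v \wedge u)^{-1}$ for the pair $(u, v) = (x, {}^x y)$, and since conjugation by $x$ is a bijection of $G$, this holds for all pairs in $G$, giving (a).

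Part (b) then follows in a two-step calculation. Apply (\ref{eq:tens1}) to $(x^{-1} x) \wedge y = 1 \wedge y = 1$, using ${}^{x^{-1}} x = x$, to obtain
\[1 = (x \wedge {}^{x^{-1}} y)\,(x^{-1} \wedge y).\]
Rearranging and invoking (a) gives $x \wedge {}^{x^{-1}} y = (x^{-1} \wedge y)^{-1} = y \wedge x^{-1}$. Since ${}^{x^{-1}}(x \wedge y) = {}^{x^{-1}} x \wedge {}^{x^{-1}} y = x \wedge {}^{x^{-1}} y$ under the diagonal action, this is precisely (b).

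Part (c) is the deepest: it is the Peiffer-type identity that endows $\kappa : G \wedge G \to G$ with the structure of a crossed module. My plan is first to verify that the diagonal rule ${}^g(x \wedge y) := {}^g x \wedge {}^g y$ respects each of (\ref{eq:tens1})--(\ref{eq:tens3}) and hence descends to a genuine action of $G$ on $G \wedge G$; a brief direct check, using that conjugation in $G$ commutes with composition, suffices. Both sides of (c) are then automorphisms of $G \wedge G$---the action of $[z,w]$ on the left, inner conjugation by $z \wedge w$ on the right---so it is enough to verify equality on a single generator $x \wedge y$. The main obstacle is this verification: one expands $(z \wedge w)(x \wedge y)(z \wedge w)^{-1}$ through a careful sequence of applications of (\ref{eq:tens1}) and (\ref{eq:tens2}), exploiting the rewriting $zw = [z,w]\cdot wz$ to extract the commutator factor, and folds the remainders back using (a) and (b) already proved, arriving at ${}^{[z,w]}x \wedge {}^{[z,w]} y$. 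The algebra is entirely forced by the defining relations; the difficulty is purely the combinatorial bookkeeping.
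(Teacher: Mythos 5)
Your parts (a) and (b) are fine: the auxiliary identities $1\wedge g=g\wedge1=1$ and $x\wedge x^{-1}=1$, the expansion of $(xy)\wedge(xy)$ via \eqref{eq:tens2} and \eqref{eq:tens1}, and the expansion of $(x^{-1}x)\wedge y$ all check out, and the bijectivity remark correctly upgrades the identity from pairs $(x,{}^xy)$ to all pairs. (Note, for calibration, that the paper itself gives no argument here -- it simply cites Brown--Loday -- so your proof is judged on its own.) A small ordering point: the well-definedness of the diagonal rule, which you postpone to (c), is already implicitly invoked in (b); alternatively one can read ${}^{g}(x\wedge y)$ throughout simply as the generator ${}^{g}x\wedge{}^{g}y$, in which case no well-definedness is needed for the statement of (b) or (c) at all.

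The genuine gap is in (c), which you rightly call the deepest part and then leave as ``combinatorial bookkeeping.'' The recipe you propose -- expand $(z\wedge w)(x\wedge y)(z\wedge w)^{-1}$ by \eqref{eq:tens1} and \eqref{eq:tens2}, using $zw=[z,w]\,wz$ -- is not a workable plan as stated: those relations only rewrite a \emph{single} symbol $a\wedge b$ whose entries are products; they give no way to move one generator past a \emph{different} generator, and that commutation rule is precisely the content of (c), so the sketch assumes what is to be proved. The missing idea is the standard double expansion of the single symbol $(mm')\wedge(nn')$: applying \eqref{eq:tens1} then \eqref{eq:tens2} gives $({}^m m'\wedge{}^m n)({}^{mn}m'\wedge{}^{mn}n')(m\wedge n)({}^n m\wedge{}^n n')$, while applying \eqref{eq:tens2} then \eqref{eq:tens1} gives $({}^m m'\wedge{}^m n)(m\wedge n)({}^{nm}m'\wedge{}^{nm}n')({}^n m\wedge{}^n n')$; cancelling the common outer factors yields
\begin{equation*}
\bigl({}^{mn}m'\wedge{}^{mn}n'\bigr)(m\wedge n)=(m\wedge n)\bigl({}^{nm}m'\wedge{}^{nm}n'\bigr).
\end{equation*}
Now put $m=z$, $n=w$, $m'={}^{(wz)^{-1}}x$, $n'={}^{(wz)^{-1}}y$; since $zw(wz)^{-1}=[z,w]$, the left-hand conjugates become ${}^{[z,w]}x\wedge{}^{[z,w]}y$ and the right-hand ones become $x\wedge y$, giving exactly ${}^{[z,w]}(x\wedge y)=(z\wedge w)(x\wedge y)(z\wedge w)^{-1}$. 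With this identity inserted (together with the routine check, which you correctly flag, that the diagonal rule respects \eqref{eq:tens1}--\eqref{eq:tens3} if one wants to speak of a genuine $G$-action), your argument becomes complete; without it, part (c) is asserted rather than proved.
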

%%%%%%%%%%%%%%%%%%%%%%%%%%%%%%%%%%%%%%%%%%%%%%%%%%%%%%%%%%%%%%%%%%%%%%%
\section{The unramified Brauer group}
\label{s:brauer}

\noindent
Let $G$ be a finite group and $V$ a faithful representation of $G$ over $\mathbb{C}$.
Bogomolov \cite{Bog88} proved that the unramified Brauer group
$\HH^2_{\rm nr}(\mathbb{C}(V)^G,\QZ )$ is canonically isomorphic to the group
\begin{equation}
\label{eq:b0def}
\B_0(G)=\bigcap _{\tiny\begin{matrix}A\le G,\\ A \hbox{ abelian}\end{matrix}} \ker \res ^G_A,
\end{equation}
where $\res ^G_A:\HH^2(G,\QZ )\to \HH^2(A,\QZ )$ is the usual cohomological restriction map.
Our first aim is to obtain a homological description of $\B_0(G)$. Thus
we need a dual of the above construction.
Let $H$ be a subgroup of $G$. Then there is a corestriction map $\cor _G^H: \HH_2(H,\Z ) \to \HH_2(G,\Z )$.
On the other hand, we have a natural map $\tau _G^H:H\wedge H\to G\wedge G$. 
Identifying $\HH_2(G,\Z )$ with $\M(G)$ and
$\HH_2(H,\Z )$ with $\M(H)$, we can write $\cor _G^H=\tau _G^H|_{\M(H)}$. Thus we have the following commutative diagram with
exact rows:
\begin{equation}
\label{eq:diagram}
%\xymatrix@R=15pt{
\xymatrix{
0\ar[r] & \HH_2(H,\Z )\ar[r]\ar[d]_{\cor _G^H} & H\wedge H \ar[r]\ar[d]_{\tau _G^H} & \gamma _2(H)\ar[r]\ar[d] & 1\\
0\ar[r] & \HH_2(G,\Z )\ar[r] & G\wedge G\ar[r] & \gamma _2(G)\ar[r] & 1
}
\end{equation}

Now define
$$\M_0(G)=\langle \cor _G^A \M(A) \mid A\le G, A \hbox{ abelian}\rangle .$$
This group can be described as a subgroup of $G\wedge G$ in the following way.

\begin{lemma}
\label{l:A0}
Let $G$ be a group. Then
$$\M_0(G)=\langle x\wedge y\mid x,y\in G, [x,y]=1\rangle .$$
\end{lemma}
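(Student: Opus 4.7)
The plan is to verify the two inclusions separately, using the description of $\M_0(G)$ as the subgroup of $G\wedge G$ generated by the images $\tau_G^A(\M(A))$ as $A$ ranges over abelian subgroups of $G$ (via the identification $\cor_G^A = \tau_G^A|_{\M(A)}$ from diagram~(\ref{eq:diagram})).

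For the inclusion $\M_0(G)\subseteq\langle x\wedge y\mid [x,y]=1\rangle$, the key observation is that when $A\le G$ is abelian, the commutator map $\kappa\colon A\wedge A\to\gamma_2(A)=1$ is trivial, so $\M(A)=A\wedge A$. Hence $\cor_G^A\M(A)=\tau_G^A(A\wedge A)$ is generated by the elements $\tau_G^A(a\wedge a')=a\wedge a'$ (viewed in $G\wedge G$) with $a,a'\in A$. Since $A$ is abelian we have $[a,a']=1$, and each generator of $\M_0(G)$ lies in the right-hand side.

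For the reverse inclusion, take any $x,y\in G$ with $[x,y]=1$ and set $A=\langle x,y\rangle$. Then $A$ is an abelian subgroup of $G$, so $\M(A)=A\wedge A$ as above. The element $x\wedge y\in A\wedge A$ maps under $\tau_G^A$ to the element $x\wedge y\in G\wedge G$, and therefore $x\wedge y\in\tau_G^A(\M(A))=\cor_G^A\M(A)\subseteq\M_0(G)$.

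There is no real obstacle here; the proof is essentially a bookkeeping exercise once one has set up the correct identifications. The only point requiring a bit of care is to keep straight that the corestriction $\cor_G^A\colon \HH_2(A,\Z)\to\HH_2(G,\Z)$ corresponds, under Miller's natural isomorphism $\HH_2(-,\Z)\cong\M(-)$, to the restriction of the functorially induced map $\tau_G^A\colon A\wedge A\to G\wedge G$; this has already been recorded in diagram~(\ref{eq:diagram}). Given this, the argument reduces to the elementary fact that the abelian subgroups generated by commuting pairs exhaust all abelian subgroups (for the purpose of generating the same subgroup of $G\wedge G$).
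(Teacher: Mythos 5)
Your proposal is correct and follows essentially the same route as the paper: both directions are handled by identifying $\cor_G^A$ with $\tau_G^A|_{\M(A)}$, noting that $\M(A)=A\wedge A$ when $A$ is abelian, and for a commuting pair $x,y$ passing to the abelian subgroup $A=\langle x,y\rangle$. Your explicit remark that $\M(A)=A\wedge A$ because $\gamma_2(A)=1$ is only a slightly more detailed bookkeeping of what the paper's proof uses implicitly.
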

\begin{proof}
Denote $N=\langle x\wedge y\mid x,y\in G, [x,y]=1\rangle .$
Suppose that $x,y\in G$ commute. Then $A=\langle x,y\rangle$ is an abelian subgroup of $G$, hence
$\cor _G^A \M(A)\le \M_0(G)$. In particular, $x\wedge y\in \M_0(G)$.

Conversely, let $A$ be an abelian subgroup of $G$. Let $w\in \cor _G^A \M(A)$. Then $w$ can be written as
$$w=\prod _{i=1}^r(a_i\wedge b_i),$$
where $a_i,b_i\in A$. Since $[a_i,b_i]=1$ for all $i=1,\ldots ,r$, it follows that
$w\in N$. This concludes the proof.
\end{proof}

For a group $G$ denote 
$$\Bt _0(G)=\M(G)/\M_0(G).$$
With this notation we have the following result.

\begin{theorem}
\label{t:main}
Let $G$ be a finite group. Then $\B_0(G)$ is naturally isomorphic to $\Hom (\Bt _0(G),\QZ )$, and thus
$\B_0(G)\cong \Bt _0(G)$ (non-canonically).
\end{theorem}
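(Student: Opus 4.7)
The plan is to reduce the intersection of kernels of restriction maps on the cohomology side to a single annihilator condition on the homology side, and then identify that annihilator with $\Hom(\Bt_0(G),\QZ)$.

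First I would apply the universal coefficient theorem. Since $G$ is finite and $\QZ$ is divisible (hence injective as a $\Z$-module), the $\Ext$-term vanishes and one obtains a natural isomorphism
\[
\HH^2(G,\QZ)\;\cong\;\Hom(\HH_2(G,\Z),\QZ),
\]
and similarly $\HH^2(A,\QZ)\cong \Hom(\HH_2(A,\Z),\QZ)$ for every subgroup $A\le G$. Combining this with Miller's isomorphism $\M(G)\cong \HH_2(G,\Z)$ invoked in Section~\ref{s:prelim}, we can regard $\HH^2(G,\QZ)$ as $\Hom(\M(G),\QZ)$.

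Next I would transport the restriction maps through this identification. The standard fact that cohomological restriction is Pontryagin-dual to homological corestriction gives, under the identifications above, the commutative square
\[
\xymatrix{
\HH^2(G,\QZ)\ar[r]^{\res^G_A}\ar[d]_{\cong} & \HH^2(A,\QZ)\ar[d]^{\cong}\\
\Hom(\M(G),\QZ)\ar[r]^{(\cor_G^A)^*} & \Hom(\M(A),\QZ).
}
\]
Consequently a homomorphism $\phi\colon\M(G)\to\QZ$ lies in $\ker\res^G_A$ if and only if $\phi\circ\cor_G^A=0$, i.e.\ $\phi$ vanishes on $\im \cor_G^A$. Taking the intersection over all abelian $A\le G$ and recalling the definition $\M_0(G)=\langle \cor_G^A\M(A)\mid A\le G \text{ abelian}\rangle$, I get
\[
\B_0(G)\;\cong\;\{\phi\in\Hom(\M(G),\QZ)\mid \phi|_{\M_0(G)}=0\}\;\cong\;\Hom(\M(G)/\M_0(G),\QZ)\;=\;\Hom(\Bt_0(G),\QZ),
\]
which is the desired natural isomorphism. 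The non-canonical isomorphism $\B_0(G)\cong\Bt_0(G)$ then follows because $\Bt_0(G)$ is a finite abelian group (being a subquotient of the finite group $\HH_2(G,\Z)$), and any finite abelian group is non-canonically isomorphic to its Pontryagin dual $\Hom(-,\QZ)$.

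The only genuinely delicate point is the duality between $\res$ in cohomology and $\cor$ in homology under the universal-coefficient identification; everything else is formal. I would either cite this from a standard source (e.g.\ Brown's Cohomology of Groups) or verify it at the level of representatives in $\M(H)\hookrightarrow H\wedge H\to G\wedge G$, using diagram \eqref{eq:diagram} to see that the map on $\HH_2$ induced by inclusion agrees with $\cor_G^H$. With that in hand, the rest of the argument is just annihilator bookkeeping.
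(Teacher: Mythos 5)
Your argument is correct, but it reaches the duality by a more formal route than the paper does. You invoke the universal coefficient theorem (with the $\Ext$-term killed by divisibility of $\QZ$) together with its naturality, so that $\res^G_A$ becomes precomposition with $\cor_G^A$ once one knows that the paper's $\cor_G^A$ is just the inclusion-induced map on $\HH_2$ (which is exactly what diagram \eqref{eq:diagram} records, so your proposed verification is the right one; do note that with the opposite convention, where ``corestriction'' means the transfer, the dual-of-restriction claim would be false, so this identification is genuinely the point to check). After that, $\B_0(G)$ is literally the annihilator of $\M_0(G)$ inside $\Hom(\M(G),\QZ)$, which is $\Hom(\Bt_0(G),\QZ)$, and finiteness of $\M(G)$ gives the non-canonical isomorphism. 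The paper instead builds the isomorphism $\HH^2(G,\QZ)\cong\Hom(\M(G),\QZ)$ by hand: it sends a class $\gamma$ to the map $\lambda_\gamma$ defined by commutators of lifts in the associated central extension, and constructs the inverse as a transgression map via a covering group; the compatibility with $\B_0$ is then checked directly, one direction using commuting lifts over abelian subgroups, the other by showing that a class killing $\M_0(G)$ restricts to an abelian (hence, since $\Ext^1_\Z(A,\QZ)=0$, trivial) extension on each bicyclic subgroup. What your approach buys is brevity and a clean reduction to standard homological algebra; what the paper's explicit construction buys is reusable machinery -- the same $\Theta$/transgression argument is adapted verbatim for the infinite-group statement (Theorem \ref{p:b0finitary}), and the ``commuting lifts'' picture it sets up is the one exploited later for CP extensions -- so both proofs are sound, with the paper trading abstraction for constructions it needs again.
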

\begin{proof}
At first we describe the natural isomorphism
between the Schur multiplier $\HH^2(G,\QZ )$ and $\Hom (\HH_2(G,\Z) ,\QZ )$ in terms of the nonabelian exterior square of $G$.
Choose $\gamma \in \HH^2(G,\QZ )$ and let
$$\xymatrix{ 0\ar[r] & \QZ\ar[r]^i & G_\gamma\ar[r]^\pi & G\ar[r] & 1}$$
be the central extension associated to $\gamma$.
Define a map $G\times G\to \gamma _2(G_\gamma )$ by the rule $(x,y)\mapsto [\bar{x},\bar{y}]$, where $\bar{x}$ and
$\bar{y}$ are preimages in $G_\gamma$ under $\pi$ of $x$ and $y$, respectively. This map is well defined. Furthermore, it is a crossed
pairing, hence it induces a homomorphism $\lambda _\gamma :G\wedge G\to\gamma _2(G_\gamma )$ given by
$\lambda _\gamma (x\wedge y)=[\bar{x},\bar{y}]$ for $x,y\in G$. It is clear that if $c\in \M(G)$, then
$\lambda _\gamma (c)\in i(\QZ)$, therefore the restriction of $\lambda _\gamma$ to $\M(G)$
(still denoted by $\lambda _\gamma)$
belongs to $\Hom (\M(G),\QZ )$. The map $\Theta :\HH^2(G,\QZ )\to \Hom (\M(G),\QZ )$ given by
$\gamma\mapsto\lambda _\gamma$ is a homomorphism of groups.

Conversely, let $\varphi\in \Hom (\M(G),\QZ )$. Let $H$ be a covering group of $G$. In other words, we have a central extension
$$\xymatrix{ 0\ar[r] & Z\ar[r]^j & H\ar[r]^\rho & G\ar[r] & 1}$$
with $jZ\le \gamma _2(H)$ and $Z\cong \M(G)$.
Every finite group
has at least one covering group by a result of Schur, cf. \cite[Hauptsatz V.23.5]{Hup67}. 
By \cite{Bro87} we have that $\gamma _2(H)$ is canonically
isomorphic to $G\wedge G$.
Upon identifying $\gamma _2(H)$ with $G\wedge G$, we may assume
without loss of generality that $\M(G)$ is a subgroup of $H$. Choose a section $\mu :G\to H$ of $\rho$ and define
a map $f:G\times G\to H$ by $f(x,y)=\mu (x)\mu (y)\mu (xy)^{-1}$ for $x,y\in G$. It is straightforward to verify $f$ maps $G\times G$ into $\M(G)$,
and that $\varphi f\in Z^2(G,\QZ )$. The cohomology class of $\varphi f$ does not depend upon the choice of $\mu$. We therefore have
a map (the so-called transgression map)
$$\tra : \Hom(\M(G),\QZ )\to \HH^2(G,\QZ )$$
given by $\tra (\varphi )=[\varphi f]$. This is easily seen to be a homomorphism, and $\Theta$ is its inverse.

Now choose $\gamma\in \B_0(G)$ and let the map $\Theta :\HH^2(G,\QZ )\to \Hom (\M(G),\QZ )$ be defined as above. Denote
$\lambda _\gamma =\Theta (\gamma )$. Let $x,y\in G$ and suppose that $[x,y]=1$. Then $A=\langle x,y\rangle$ is an
abelian subgroup of $G$, therefore $\res _A^G(\gamma )=0$. This implies that
$\lambda _\gamma (x\wedge y)=[\bar{x},\bar{y}]=1$. Therefore $\Theta$ induces a 
homomorphism $\tilde{\Theta} : \B_0(G)\to \Hom (\M(G)/\M_0(G),\QZ )$.

Let $\varphi\in \Hom (\M(G)/\M_0(G),\QZ )$. Then $\varphi$ can be lifted to a homomorphism $\bar{\varphi} :\M(G)\to\QZ$.
Put $\gamma =\tra (\bar{\varphi})$.
Suppose that
$$\xymatrix{ 0\ar[r] & \QZ\ar[r]^i & G_\gamma\ar[r]^\pi & G\ar[r] & 1}$$
is a central extension associated to $\gamma$. Choose an arbitrary
bicyclic subgroup $A=\langle a,b\rangle$ of $G$. Then we have a central extension
$$\xymatrix{ 0\ar[r] & \QZ\ar[r]^i & A_\gamma\ar[r]^{\pi |_{A_\gamma}} & A\ar[r] & 1}$$
that corresponds to $\res _A^G(\gamma )$. Since $[a,b]=1$, we have that $a\wedge b\in \M_0(G)\le\ker\bar{\varphi}$, therefore
$[\bar{a},\bar{b}]=1$ in $A_\gamma$. It follows that $A_\gamma$ is abelian, thus
$\gamma\in \B_0(G)$. Hence the transgression map induces a homomorphism $\widetilde{\tra} :  \Hom (\M(G)/\M_0(G),\QZ )\to \B_0(G)$
whose inverse is $\tilde{\Theta}$.
\end{proof}

The definition of $\B_0(G)$ can be extended to infinite groups as follows \cite{Bog88}. Let $G$ be a group. Define
$$K_G=\{\gamma\in \HH ^2(G,\QZ )\mid \res ^G_H\gamma =0\hbox{ for every finite } H\le G\}.$$
Let $\B_0(G)$ be the subgroup of $\HH ^2(G,\QZ )/K_G$ consisting of all $\gamma +K_G$ with the property
that $\res _A^G\gamma =0$ for every finite abelian subgroup $A$ of $G$. 
It is clear that if $G$ is a finite group, then this definition of $\B_0(G)$ coincides with the one given by \eqref{eq:b0def}.
Bogomolov \cite[Theorem 3.1]{Bog88} showed that if $G$ is an algebraic group, then $\B_0(G)$ is isomorphic to
$\HH^2_{\rm nr}(\mathbb{C}(V)^G,\QZ )$, where $V$ is any generically free representation of $G$.

In order to obtain a homological description of $\B_0(G)$ for infinite groups, we denote
$$\M_{\mathcal{F}}(G)=\langle \cor ^H_G\M (H)\mid H\le G, |H|<\infty\rangle$$
and
$$\M_{0\mathcal{F}}(G)=\langle \cor ^A_G\M (A)\mid A\le G, |A|<\infty, A\hbox{ abelian}\rangle .$$
Note that a similar argument as that of Lemma \ref{l:A0} shows
that $\M_{0\mathcal{F}}(G)=\langle x\wedge y\mid [x,y]=1, |x|<\infty ,|y|<\infty\rangle$.
Now define $\Bt _{0\mathcal{F}}(G)=\M_{\mathcal{F}}(G)/\M_{0\mathcal{F}}(G)$. Then we have:

\begin{theorem}
\label{p:b0finitary}
Let $G$ be a group. Then the group $\B_0(G)$ is naturally isomorphic to
$\Hom (\Bt _{0\mathcal{F}}(G),\QZ )$.
\end{theorem}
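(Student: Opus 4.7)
The plan is to mimic the argument of Theorem \ref{t:main}, with ``abelian'' replaced by ``finite'' (resp.~``finite abelian'') where appropriate, and, since an infinite $G$ need not admit a covering group in the sense used there, to route the argument through the universal coefficient theorem rather than through an explicit covering group construction. Since $\QZ$ is a divisible, hence injective, $\Z$-module, universal coefficients supply, for every group $G$, a natural isomorphism $\Theta_G:\HH^2(G,\QZ)\xrightarrow{\sim}\Hom(\M(G),\QZ)$, which for finite $G$ reduces to the map $\Theta$ constructed in the proof of Theorem \ref{t:main}.

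The key step will be to record the duality identity $\Theta_H(\res^G_H\gamma)=\Theta_G(\gamma)\circ\cor_G^H$ for every subgroup $H\le G$ and every $\gamma\in\HH^2(G,\QZ)$: this expresses that restriction in cohomology is dual to corestriction in homology, and follows from naturality of $\Theta$ together with the compatibility of $\cor_G^H$ with the identification $\M(\cdot)\cong\HH_2(\cdot,\Z)$ used throughout the paper. Granted this, $\res^G_H\gamma=0$ iff $\Theta_G(\gamma)$ vanishes on $\cor_G^H\M(H)$. Letting $H$ run over all finite subgroups of $G$, we conclude $\gamma\in K_G$ iff $\Theta_G(\gamma)$ vanishes on $\M_{\mathcal{F}}(G)$. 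Using injectivity of $\QZ$ once more, the restriction map $\Hom(\M(G),\QZ)\to\Hom(\M_{\mathcal{F}}(G),\QZ)$ is surjective with kernel exactly the homomorphisms killing $\M_{\mathcal{F}}(G)$, so $\Theta_G$ descends to a natural isomorphism $\bar\Theta_G:\HH^2(G,\QZ)/K_G\xrightarrow{\sim}\Hom(\M_{\mathcal{F}}(G),\QZ)$.

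Finally, applying the same duality identity with $H=A$ running over finite abelian subgroups of $G$, a class $\gamma+K_G$ lies in $\B_0(G)$ precisely when $\bar\Theta_G(\gamma+K_G)$ vanishes on every $\cor_G^A\M(A)$, i.e.~on $\M_{0\mathcal{F}}(G)$. Hence $\bar\Theta_G$ restricts to the desired natural isomorphism $\B_0(G)\cong\Hom(\Bt_{0\mathcal{F}}(G),\QZ)$. The main obstacle will be making the duality identity precise in the nonabelian exterior picture; a clean route is to construct $\Theta_H$ via the crossed-pairing/transgression recipe of Theorem \ref{t:main} applied inside a central extension $G_\gamma$ of $G$ representing $\gamma$, restricting the chosen section $\mu$ to $H$ to obtain a section of the pulled-back extension of $H$, so that both sides of the identity literally evaluate to the same commutator $[\bar x,\bar y]$ in $G_\gamma$ for $x,y\in H$.
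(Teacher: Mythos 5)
Your proposal is correct and follows essentially the same route as the paper: the paper's proof also passes through the natural isomorphism $\HH^2(G,\QZ)\cong\Hom(\M(G),\QZ)$, identifies $K_G$ with the classes whose corresponding homomorphisms kill $\M_{\mathcal F}(G)$, and then repeats the argument of Theorem \ref{t:main} for finite abelian subgroups. You merely make explicit (via naturality of the universal coefficient isomorphism and the identification $\cor_G^H=\tau_G^H|_{\M(H)}$) the restriction--corestriction duality that the paper leaves implicit.
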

\begin{proof}
We have a natural isomorphism $\HH ^2(G,\QZ)\cong \Hom (\M(G),\QZ )$. For $\gamma\in\HH ^2(G,\QZ)$ denote
by $\lambda_\gamma$ the corresponding element of $\Hom (\M(G),\QZ )$. By our definition we have that
$\gamma\in K_G$ if and only if $\M _{\mathcal{F}}(G)\le \ker\lambda _\gamma$. Therefore
$\HH ^2(G,\QZ)/K_G$ is naturally isomorphic to $\Hom (\M_{\mathcal{F}}(G),\QZ )$. Adapting the argument of the proof
of Theorem \ref{t:main}, we obtain the required result.
\end{proof}

In general, $\Bt _0(G)$ and $\Bt _{0\mathcal{F}}(G)$ may be quite different. For example, if $G$ is a one-relator group
with torsion, then $G$ has a presentation $G=\langle X\mid s^m\rangle$, where $s$ is not a proper power in the free group
over $X$. By a result of Newman \cite{New68}, every finite subgroup of $G$ is conjugate to a subgroup of $\langle s\rangle$,
hence
$\Bt _{0\mathcal{F}}(G)=0$. On the other hand,
all centralizers of nontrivial elements of $G$ are cyclic \cite{New68}, hence $\M_0(G)=0$ and therefore
$\Bt_0(G)\cong \HH_2(G,\mathbb{Z})$. The latter can be nontrivial, cf. Lyndon \cite{Lyn50}.

In the case of locally finite groups we have the following:

\begin{corollary}
\label{c:locallyfin}
Let $G$ be a locally finite group. Then $\Bt _0(G)\cong\Bt _{0\mathcal{F}}(G)$.
\end{corollary}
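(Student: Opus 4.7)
The plan is to show the two defining subgroups coincide with their finitary counterparts, i.e.\ $\M(G)=\M_{\mathcal F}(G)$ and $\M_0(G)=\M_{0\mathcal F}(G)$, and then divide.

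First I would dispose of $\M_0(G)=\M_{0\mathcal F}(G)$. The inclusion $\M_{0\mathcal F}(G)\subseteq \M_0(G)$ is immediate from the very definition and Lemma~\ref{l:A0}. For the reverse inclusion I would invoke the remark proved just before the corollary, namely that $\M_{0\mathcal F}(G)=\langle x\wedge y\mid [x,y]=1,\ |x|<\infty,\ |y|<\infty\rangle$. In a locally finite group every element has finite order, so this description specialises to $\langle x\wedge y\mid [x,y]=1\rangle$, which is precisely $\M_0(G)$ by Lemma~\ref{l:A0}.

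Next I would establish $\M(G)=\M_{\mathcal F}(G)$. The inclusion $\M_{\mathcal F}(G)\subseteq \M(G)$ is clear because each $\cor_G^H$ lands in $\M(G)=\HH_2(G,\Z)$. For the opposite inclusion I would use that $\HH_2(-,\Z)$ commutes with filtered colimits of groups: since $G$ is locally finite, every finitely generated subgroup is finite, so $G$ is the directed union of its finite subgroups $H$, whence
$$\HH_2(G,\Z)\;=\;\varinjlim_{H\le G,\ |H|<\infty}\HH_2(H,\Z),$$
with structure maps the corestrictions induced by inclusions. Therefore any $c\in \M(G)$ is in the image of $\cor_G^H$ for some finite $H\le G$, and hence lies in $\M_{\mathcal F}(G)$. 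Identifying $\M(H)$ with its image under $\tau_G^H$ inside $G\wedge G$ via diagram~\eqref{eq:diagram}, this is exactly the containment required.

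Combining the two equalities gives
$$\Bt_0(G)=\M(G)/\M_0(G)=\M_{\mathcal F}(G)/\M_{0\mathcal F}(G)=\Bt_{0\mathcal F}(G),$$
as claimed. The only non-formal input is the commutation of $\HH_2$ with direct limits of groups; apart from that the proof is bookkeeping using Lemma~\ref{l:A0} and the finitary description of $\M_{0\mathcal F}(G)$ recorded above. I do not foresee a genuine obstacle, the step to be pedantic about being just the identification of the colimit structure maps on $H_2$ with the corestrictions used to define $\M_{\mathcal F}(G)$.
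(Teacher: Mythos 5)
Your proposal is correct and takes essentially the same route as the paper: the paper likewise obtains $\M(G)=\M_{\mathcal F}(G)$ from the fact that $\HH_2(-,\Z)$ commutes with direct limits (here over the finite subgroups of the locally finite group $G$) and $\M_0(G)=\M_{0\mathcal F}(G)$ from periodicity via the finitary analogue of Lemma~\ref{l:A0}, then passes to quotients. Nothing is missing.
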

\begin{proof}
Every group $G$ is a direct limit of its finitely generated subgroups $\{ G_\lambda\mid \lambda\in\Lambda\}$. If $G$ is locally finite,
then the groups $G_\lambda$ are all finite. Since $\M(G)\cong\varinjlim \M(G_\lambda)$, we conclude that $\M (G)=\M_{\mathcal{F}}(G)$.
Since $G$ is periodic, we also have that $\M_{0\mathcal{F}}(G)=\M _0(G)$, hence the result.
\end{proof}

Corollary \ref{c:locallyfin} applies, for example, to periodic linear groups.
On the other hand, there exist finitely generated periodic groups $G$ (even of finite exponent) such that
$\Bt_{0\mathcal{F}}(G)=0$, yet $\Bt_0(G)$ is nontrivial.

\begin{example}
\label{ex:burnside}
Suppose $m>1$ and let $n>2^{48}$ be odd. Let $F$ be a free group of rank $m$. Denote
$B(m,n)=F/F^n$, the {\it free Burnside group} of rank $m$ and exponent $n$. Ivanov \cite{Iva94} showed
that all centralizers of nontrivial elements of $B(m,n)$ are cyclic, and that every finite subgroup of $B(m,n)$
is cyclic. From here it follows that $\Bt _{0\mathcal{F}}(B(m,n))=0$ and $\Bt_0(B(m,n))\cong\HH _2(B(m,n),\mathbb{Z})$. The latter
group is free abelian of countable rank, cf. \cite[Corollary 31.2]{Ols91}.
\end{example}

In the rest of the paper we mainly consider
the properties of $\Bt_0(G)$. Obviously $\Bt_0$ is a covariant functor
from $\mathbf{Gr}$ to $\mathbf{Ab}$. It is well known that the homology functor commutes with direct limits. It turns out
that $\Bt_0$ enjoys the same property:

\begin{proposition}
\label{p:dirlim}
The functor $\Bt_0$ commutes with direct limits. More precisely, if $\{ G_\lambda ,\alpha_\lambda^\mu\mid \lambda\le\mu\in\Lambda\}$
is a direct system of groups and $G$ its direct limit, then $\Bt_0(G)$ is the direct limit of
 $\{ \Bt_0(G_\lambda ),\Bt_0(\alpha_\lambda^\mu)\mid \lambda\le\mu\in\Lambda\}$.
\end{proposition}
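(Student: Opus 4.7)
The plan is to exploit the exact sequence
\[
0 \longrightarrow \M_0(G) \longrightarrow \M(G) \longrightarrow \Bt_0(G) \longrightarrow 0
\]
together with the fact that the direct limit functor on abelian groups is exact. Writing $\alpha_\lambda : G_\lambda \to G$ for the canonical maps into the direct limit, the induced maps $\Bt_0(\alpha_\lambda) : \Bt_0(G_\lambda) \to \Bt_0(G)$ assemble into a comparison map $\varphi : \varinjlim \Bt_0(G_\lambda) \to \Bt_0(G)$, and I want to show $\varphi$ is an isomorphism. By the exactness of $\varinjlim$ and the five lemma applied to the two sequences above (taken over the direct system and at its limit), it suffices to prove the identifications
\[
\M(G) \;=\; \varinjlim \M(G_\lambda) \qquad \text{and} \qquad \M_0(G) \;=\; \varinjlim \M_0(G_\lambda),
\]
where in the second equation both sides are regarded as subgroups of $\M(G)$ via the first identification.

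The first identification is the classical fact that $\HH_2(-,\Z)$ commutes with direct limits, transferred through Miller's isomorphism $\HH_2(G,\Z)\cong \M(G)$ recalled in Section~\ref{s:prelim}. For the second identification, one inclusion is immediate: for each $\lambda$, the image of $\M_0(G_\lambda)$ in $\M(G)$ is generated by elements $\alpha_\lambda(x)\wedge\alpha_\lambda(y)$ with $[x,y]=1$ in $G_\lambda$, and such elements lie in $\M_0(G)$ by Lemma~\ref{l:A0}. For the reverse inclusion, a generator $x\wedge y$ of $\M_0(G)$ (with $x,y\in G$ commuting) has representatives $x',y'$ in some $G_\lambda$; although $[x',y']$ need not vanish in $G_\lambda$, it does become trivial in $G$, so by the direct limit property it already becomes trivial in some $G_\mu$ with $\mu\ge\lambda$. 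The element $\alpha_\mu(x')\wedge \alpha_\mu(y')$ then lies in $\M_0(G_\mu)$ and maps to $x\wedge y$ in $\M_0(G)$, establishing the reverse inclusion.

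The main obstacle is the injectivity part of the second identification, that is, controlling when an element of some $\M_0(G_\lambda)$ maps to zero in $\M(G)$; this is handled by the same ``eventual relation'' trick: any finite product of generators $x_i\wedge y_i$ representing an element of $\M_0(G_\lambda)$ involves only finitely many commutation relations, which if they become trivial in the limit already do so at some finite stage $\mu\ge\lambda$. Once these bookkeeping steps are in place, applying $\varinjlim$ (which is exact) to the short exact sequences $\M_0(G_\lambda)\hookrightarrow \M(G_\lambda)\twoheadrightarrow\Bt_0(G_\lambda)$ and comparing with the analogous sequence for $G$ yields the isomorphism $\varphi$.
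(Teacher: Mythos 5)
Your proposal is correct and takes essentially the same route as the paper: apply the exact functor $\varinjlim$ to the short exact sequences $\M_0(G_\lambda)\rightarrowtail\M(G_\lambda)\twoheadrightarrow\Bt_0(G_\lambda)$, compare with the sequence for $G$ via the natural isomorphism $\varinjlim\M(G_\lambda)\cong\M(G)$, and identify $\varinjlim\M_0(G_\lambda)$ with $\M_0(G)$. Your eventual-commutation argument just fills in the step the paper calls ``clear''; note that the injectivity you worry about is automatic, since the map $\varinjlim\M_0(G_\lambda)\to\M_0(G)$ is a restriction of the isomorphism on $\M$.
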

\begin{proof}
For every $\lambda\in\Lambda$ we have
$$0\longrightarrow \M_0(G_\lambda )\longrightarrow\M (G_\lambda )\longrightarrow \Bt_0(G_\lambda )\longrightarrow 0,$$
hence the diagram
$$
\xymatrix{
0\ar[r] & \varinjlim  \M_0(G_\lambda )\ar[r]\ar[d]_{\alpha '} & \varinjlim \M (G_\lambda ) \ar[r]\ar[d]_{\alpha} & \varinjlim\Bt_0(G_\lambda )
 \ar[r]\ar[d]_{\tilde{\alpha}} & 0\\
0\ar[r] & \M_0(G)\ar[r] & \M(G)\ar[r] & \Bt_0(G)\ar[r] & 0
}
$$
is commutative with exact rows. Here $\alpha$ is the natural isomorphism, and $\alpha '$ is its restriction. Clearly
$\alpha '$ is an isomorphism, hence so is $\tilde{\alpha}$.
\end{proof}

\begin{proposition}
\label{p:freeprod}
Let $G_1$ and $G_2$ be groups. Then $\Bt _0(G_1\ast G_2)\cong \Bt_0(G_1)\times \Bt_0(G_2)$
and $\Bt _{0\mathcal{F}}(G_1\ast G_2)\cong \Bt_{0\mathcal{F}}(G_1)\times \Bt_{0\mathcal{F}}(G_2)$.
\end{proposition}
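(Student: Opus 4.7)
The plan is to reduce the statement to the classical decomposition $\M(G_1\ast G_2)\cong \M(G_1)\oplus\M(G_2)$ of the Schur multiplier of a free product (a Mayer--Vietoris consequence of $B(G_1\ast G_2)\simeq BG_1\vee BG_2$). Granting this, the heart of the proof is to identify $\M_0(G_1\ast G_2)$ with $\M_0(G_1)\oplus\M_0(G_2)$ inside $\M(G_1\ast G_2)$, and similarly for the finitary versions.

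First I would use the inclusions $\iota_i\colon G_i\hookrightarrow G_1\ast G_2$ and the canonical retractions $\pi_i\colon G_1\ast G_2\twoheadrightarrow G_i$ (killing the other factor) to construct, by functoriality of $\Bt_0$, natural maps $\alpha\colon\Bt_0(G_1)\times\Bt_0(G_2)\to\Bt_0(G_1\ast G_2)$ and $\beta\colon\Bt_0(G_1\ast G_2)\to\Bt_0(G_1)\times\Bt_0(G_2)$. Since $\pi_j\iota_i$ is the identity for $i=j$ and trivial for $i\ne j$, $\beta\alpha=\mathrm{id}$; hence $\alpha$ is injective, and its surjectivity is equivalent to the displayed equality for $\M_0$.

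The inclusion $\M_0(G_1)\oplus\M_0(G_2)\subseteq\M_0(G_1\ast G_2)$ is immediate, because commuting pairs in $G_i$ remain commuting in the free product. For the reverse inclusion, consider a generator $x\wedge y$ with $[x,y]=1$ in $G_1\ast G_2$. By the Kurosh subgroup theorem the abelian subgroup $\langle x,y\rangle$ is either infinite cyclic, or conjugate into some factor $G_i$. In the cyclic case $x=z^a$, $y=z^b$, a short induction using \eqref{eq:tens1}, \eqref{eq:tens2}, and $z\wedge z=1$ shows $z^a\wedge z^b=1$ in any nonabelian exterior square. In the other case, writing $x=gx'g^{-1}$, $y=gy'g^{-1}$ with $x',y'\in G_i$ commuting, and using that $G$ acts trivially on the central subgroup $\M(G)\cong\HH_2(G,\Z)$ of $G\wedge G$ (an inner automorphism induces the identity on $\HH_*(G)$), I obtain $x\wedge y={}^g(x'\wedge y')=x'\wedge y'$, which lies in the $G_i$-summand $\M_0(G_i)$.

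For $\Bt_{0\mathcal{F}}$ the same strategy applies once we invoke Kurosh's additional consequence that every nontrivial finite subgroup of $G_1\ast G_2$ is conjugate into a factor (a finite group is never free). This collapses each generator $\cor^H_{G_1\ast G_2}\M(H)$ with $|H|<\infty$ (resp.\ $H$ finite abelian) to a summand of $\M_{\mathcal{F}}(G_i)$ (resp.\ $\M_{0\mathcal{F}}(G_i)$), and the decomposition follows as before. The main technical point is the Kurosh case analysis of commuting pairs, together with a clean justification that conjugation by an arbitrary element of $G$, not only by a commutator as directly covered by Lemma \ref{l:ext}(c), acts trivially on $\M(G)\subseteq G\wedge G$; once these are in hand the argument is formal.
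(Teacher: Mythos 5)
Your proposal is correct and follows essentially the same route as the paper: decompose $\M(G_1\ast G_2)$ as $\iota_1^*\M(G_1)\times\iota_2^*\M(G_2)$, then show that $\M_0$ (and likewise $\M_{\mathcal F}$, $\M_{0\mathcal F}$) decomposes compatibly by reducing commuting pairs and finite subgroups to conjugates of the factors (your Kurosh case analysis replaces the paper's appeal to the centralizer structure of free products) and using that conjugation acts trivially on $\M(G)$. The only cosmetic difference is that you obtain injectivity via the retractions $\pi_i$ instead of the paper's diagram argument, and your aside that surjectivity of $\alpha$ is ``equivalent'' to the $\M_0$-decomposition is a harmless overstatement, since surjectivity already follows from the decomposition of $\M$.
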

\begin{proof}
Let $G=G_1\ast G_2$ and let $\iota _1:G_1\to G$ and $\iota _2:G_2\to G$ be the canonical injections.
Then the induced maps $\iota^*_i:\M(G_i)\to \M(G)$ are injective ($i=1,2$), $\iota^*_1\M(G_1)\cap \iota^*_2\M(G_2)=1$ and
$\M (G)=\iota^*_1\M(G_1)\times\iota^*_2\M(G_2)$ by \cite{Mil52}. Now let $a,b\in G\setminus\{ 1\}$ with $[a,b]=1$. By 
\cite[p. 196]{Mag76} we have the following possibilities. If $a\in {}^h\iota _1(G_1)$, then $b\in C_G(a)\le {}^h\iota _1(G_1)$, 
hence we
can write $a={}^h\iota _1(x)$ and $b={}^h\iota _1(y)$  for some commuting elements $x,y\in G_1$. In this case we get
$a\wedge b={}^h(\iota _1(x)\wedge \iota _1(y))=\iota _1(x)\wedge \iota _1(y)$, as $G$ acts trivially on $\M (G)$.
For $a\in {}^h\iota _2(G_2)$, the situation is similar.
If neither $a\in {}^h\iota _1(G_1)$ nor  $a\in {}^h\iota _2(G_2)$, $C_G(a)$ is infinite cyclic. In this case we clearly have that $a\wedge b=1$.
Therefore we conclude that
$\M_0(G)=\iota ^*_1\M_0(G_1)\times \iota ^*_2\M_0(G_2)$.
It follows from here that
$$\Bt _0(G)\cong \iota^\sharp_1\Bt _0(G_1)\times  \iota^\sharp_2\Bt _0(G_2),$$
where $\iota ^\sharp _i:\Bt _0(G_1)\to\Bt _0(G)$ are the maps induced by $\iota _i$, $i=1,2$. From the diagram
$$
\xymatrix{
1\ar[r] & \M_0(G_i )\ar[r]\ar[d]_{\iota_i ^*} & \M(G_i) \ar[r]\ar[d]_{\iota _i^*} & \Bt _0(G_i)
 \ar[r]\ar[d]_{\iota _i^\sharp} & 1\\
1\ar[r] & \M_0(G)\ar[r] & \M(G)\ar[r] & \Bt_0(G)\ar[r] & 1
}
$$
we see that $\iota _i^\sharp$, $i=1,2$, are both injective, therefore $\Bt _0(G_1\ast G_2)\cong \Bt_0(G_1)\times \Bt_0(G_2)$.

It remains to prove the corresponding assertion for  $\Bt _{0\mathcal{F}}(G_1\ast G_2)$. The above argument shows that
$\M _{0\mathcal{F}}(G)=\iota _1^*\M _{0\mathcal{F}}(G_1)\times \iota _2^*\M _{0\mathcal{F}}(G_2)$. By \cite[p. 54]{Bro82},
every finite subgroup of $G$ is conjugate to a subgroup of $G_1$ or $G_2$. Since $G$ acts trivially on $\M (G)$, we therefore conclude
that
$\M _{\mathcal{F}}(G)=\langle \cor ^H_G\M (H)\mid H\le G_1\hbox{ or } H\le G_2, |H|<\infty\rangle
=\iota _1^*\M _{\mathcal{F}}(G_1)\times \iota _2^*\M _{\mathcal{F}}(G_2)$. From here the result follows along the same lines as above.
\end{proof}

Let the group $G$ be given by a free presentation $G=F/R$, where $F$ is a free group and $R$ a normal subgroup of $F$.
By the well known Hopf formula \cite[Theorem II.5.3]{Bro82}
we have that $\M(G)\cong (\gamma _2(F)\cap R)/[R,F]$. The isomorphism is induced by the canonical
isomorphism $G\wedge G\to \gamma _2(F)/[R,F]$ given by $xR\wedge yR\mapsto [x,y][R,F]$. Under this map, $\M_0(G)$ can be identified
with the subgroup of $F/[F,R]$ generated by all the commutators in $F/[F,R]$ that belong to the Schur multiplier of $G$. In other words,
we have that $\M_0(G)\cong \langle \K(F/[R,F])\cap R/[R,F]\rangle =\langle \K(F)\cap R\rangle [R,F]/[R,F]=
\langle \K(F)\cap R\rangle/[R,F]$. Thus we have proved the following
Hopf-type formula for $\Bt_0(G)$:

\begin{proposition}
\label{p:hopf}
Let $G$ be a group given by a free presentation $G=F/R$. Then
$$\Bt_0(G)\cong \frac{\gamma _2(F)\cap R}{\langle \K(F)\cap R\rangle }.$$
\end{proposition}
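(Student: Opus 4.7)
The plan is to deduce this from Hopf's classical formula for the Schur multiplier, combined with the explicit identification of $\M_0(G)$ provided by Lemma \ref{l:A0}. By Hopf's formula, there is a natural isomorphism $\M(G) \cong (\gamma_2(F) \cap R)/[R,F]$ induced by the canonical isomorphism $\Phi : G \wedge G \to \gamma_2(F)/[R,F]$ sending $xR \wedge yR \mapsto [x,y][R,F]$. Since $\Bt_0(G) = \M(G)/\M_0(G)$, the task reduces to computing the image of $\M_0(G)$ under $\Phi$.

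By Lemma \ref{l:A0}, $\M_0(G)$ is generated by the elements $xR \wedge yR$ with $[xR,yR] = 1$ in $G$, i.e., with $[x,y] \in R$. Under $\Phi$ each such generator maps to $[x,y][R,F]$, where $[x,y] \in \K(F) \cap R$. Conversely, any element $[x,y] \in \K(F) \cap R$ arises this way, since the hypothesis $[x,y] \in R$ is exactly the condition that the images $xR$ and $yR$ commute in $G$. Hence
\[
\Phi(\M_0(G)) = \langle \K(F) \cap R \rangle \cdot [R,F] / [R,F].
\]

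The last ingredient is to observe that $[R,F] \subseteq \langle \K(F) \cap R \rangle$: for any $r \in R$ and $f \in F$, the element $[r,f]$ lies in $R$ (as $R$ is normal in $F$) and is a commutator in $F$, so it belongs to $\K(F) \cap R$. Therefore $\langle \K(F) \cap R \rangle \cdot [R,F] = \langle \K(F) \cap R \rangle$, and by the third isomorphism theorem,
\[
\Bt_0(G) = \M(G)/\M_0(G) \cong \frac{(\gamma_2(F) \cap R)/[R,F]}{\langle \K(F) \cap R \rangle / [R,F]} \cong \frac{\gamma_2(F) \cap R}{\langle \K(F) \cap R \rangle}.
\]
The only nontrivial verification is the identification of $\Phi(\M_0(G))$, but this is immediate from Lemma \ref{l:A0} once one notes the correspondence between commuting pairs in $G$ and commutators in $F$ that lie in $R$.
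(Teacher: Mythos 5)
Your proof is correct and follows essentially the same route as the paper: pass through the canonical isomorphism $G\wedge G\cong\gamma_2(F)/[R,F]$ of the classical Hopf formula, use Lemma \ref{l:A0} to identify the image of $\M_0(G)$ as $\langle\K(F)\cap R\rangle[R,F]/[R,F]$, and absorb $[R,F]$ into $\langle\K(F)\cap R\rangle$. Your explicit check that $[R,F]\subseteq\langle\K(F)\cap R\rangle$ is exactly the step the paper uses implicitly, so there is nothing to add.
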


This formula enables, in principle, explicit calculations of $\Bt_0(G)$, given a free presentation of $G$. For example, a word $w$ in a
free group $F$ is said to be a {\it commutator word} if $w=[u,v]$ for some $u,v\in F$. We have the following result:

\begin{corollary}
\label{c:relfree}
Let $\mathfrak{V}$ be a variety of groups defined by a commutator word $w$. If $G$ is a $\mathfrak{V}$-relatively free group,
then $\Bt _0(G)=0$.
\end{corollary}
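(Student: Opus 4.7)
The plan is to apply the Hopf-type formula from Proposition \ref{p:hopf} directly to a natural free presentation of $G$. Since $G$ is a $\mathfrak{V}$-relatively free group and $\mathfrak{V}$ is defined by the commutator word $w=[u,v]$, we can realize $G$ as $G=F/V(F)$, where $F$ is an absolutely free group (of appropriate rank) and $V(F)$ is the verbal subgroup, i.e., the normal subgroup generated by all values $w(f_1,\ldots,f_n)$ as the $f_i$ range over $F$. I would set $R=V(F)$ and work with this presentation throughout.

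The crucial observation is that because $w$ is a commutator word, every generator of $V(F)$ is, by construction, a commutator in $F$: a substitution instance $w(f_1,\ldots,f_n)=[u(f_1,\ldots,f_n),v(f_1,\ldots,f_n)]$ lies in $\K(F)$. Hence each normal generator of $R$ belongs to $\K(F)\cap R$. Since $\K(F)$ is stable under conjugation (conjugates of commutators are commutators), the normal closure of these generators equals the (ordinary) subgroup they generate together with their $F$-conjugates, which is still contained in $\langle\K(F)\cap R\rangle$. Thus $R=\langle\K(F)\cap R\rangle$. Moreover, every such generator lies in $\gamma_2(F)$, so $R\le\gamma_2(F)$ and consequently $\gamma_2(F)\cap R=R$.

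Plugging into the Hopf-type formula of Proposition \ref{p:hopf}, we obtain
$$\Bt_0(G)\cong\frac{\gamma_2(F)\cap R}{\langle\K(F)\cap R\rangle}=\frac{R}{R}=0,$$
as required.

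There is no serious obstacle here; the only point requiring a moment of care is the remark that $\langle\K(F)\cap R\rangle$ in the Hopf formula can indeed absorb $V(F)$ despite $V(F)$ being described as a \emph{normal} closure, and that is handled by the conjugation-invariance of $\K(F)$. Everything else is a direct application of Proposition \ref{p:hopf}.
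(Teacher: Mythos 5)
Your proposal is correct and follows essentially the same route as the paper: present $G=F/\mathfrak{V}(F)$, observe that $\mathfrak{V}(F)\le\gamma_2(F)$ and $\langle\K(F)\cap\mathfrak{V}(F)\rangle=\mathfrak{V}(F)$ (your remark about conjugation-invariance of $\K(F)$ just makes explicit why the verbal subgroup is already generated by commutators lying in it), and apply Proposition \ref{p:hopf}.
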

\begin{proof}
Let $w$ be an $n$-variable commutator word.
$G$ can be presented as a quotient $F/\mathfrak{V}(F)$ of a free group $F$ by the verbal subgroup
$\mathfrak{V}(F)=\langle w(f_1,\ldots ,f_n)\mid f_1,\ldots ,f_n\in F\rangle$ of $F$.
Note that $\mathfrak{V}(F)\le\gamma _2(F)$ and $\langle \K (F)\cap\mathfrak{V}(F)\rangle =\mathfrak{V}(F)$. By
Proposition \ref{p:hopf} we get the result.
\end{proof}

On the other hand, there exist relatively free groups $G$ with $\Bt _0(G)\neq 0$, cf. Example \ref{ex:burnside} and Section \ref{s:comput}.

Another interpretation of $\Bt_0(G)$ for finite groups $G$ can be obtained via covering groups. 
Covering groups of a given group $G=F/R$ may not be unique,
yet their derived subgroups are all naturally isomorphic to $\gamma _2(F)/[R,F]$. Under this identification 
we have the following result.

\begin{proposition}
\label{p:cover}
Let $G$ be a finite group and $H$ its covering group. Let $Z$ be a central subgroup of $H$ such that
$Z\le \gamma _2(H)$, $Z\cong \M(G)$ and $H/Z\cong G$. Then
$$\Bt_0(G)\cong \frac{Z}{\langle \K(H)\cap Z\rangle}.$$
In particular, $\Bt_0(G)=0$ if and only if every element of $Z$ can be represented as a product of commutators
that all belong to $Z$.
\end{proposition}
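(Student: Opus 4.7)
The plan is to apply the Hopf-type formula of Proposition~\ref{p:hopf} after presenting both $G$ and its covering group $H$ simultaneously. First I would fix a free presentation $F \twoheadrightarrow H$ with kernel $S$, and let $R \le F$ be the full preimage of $Z$, so that $G = F/R$, $H = F/S$ and $Z = R/S$. Since $Z$ is central in $H$ we have $[R,F] \le S$, and since $Z \le \gamma_2(H) = \gamma_2(F)S/S$ we have $R \le \gamma_2(F) S$, hence $R = (\gamma_2(F) \cap R) \cdot S$. The canonical inclusion-induced map $(\gamma_2(F) \cap R)/[R,F] \to R/S$ is therefore surjective; as $G$ is finite, both groups are finite, and the assumed isomorphism $Z \cong \M(G)$ forces equal orders, so this map is an isomorphism, equivalently $(\gamma_2(F) \cap R) \cap S = [R,F]$.

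Next I would invoke Proposition~\ref{p:hopf} to get $\Bt_0(G) \cong (\gamma_2(F) \cap R)/\langle \K(F) \cap R\rangle$. Because every commutator in $H$ lifts to a commutator in $F$, the subgroup $\langle \K(H) \cap Z\rangle$ equals $\langle \K(F) \cap R\rangle \cdot S/S$. Combining this with $R = (\gamma_2(F) \cap R) \cdot S$ and the second isomorphism theorem yields
$$\frac{Z}{\langle \K(H) \cap Z\rangle} \cong \frac{\gamma_2(F) \cap R}{(\gamma_2(F) \cap R) \cap \bigl(\langle \K(F) \cap R\rangle \cdot S\bigr)}.$$
The target then matches the Hopf-type expression for $\Bt_0(G)$ once one shows the denominator collapses to $\langle \K(F) \cap R\rangle$.

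The remaining piece is exactly this collapse. Given $x = ws$ lying in the intersection, with $w \in \langle \K(F) \cap R\rangle$ and $s \in S$, I would observe that $w \in \gamma_2(F) \cap R$ automatically, so $s = w^{-1}x \in (\gamma_2(F) \cap R) \cap S = [R,F]$. Since $R$ is normal in $F$, each $[r,f]$ lies in $\K(F) \cap R$, whence $[R,F] \le \langle \K(F) \cap R\rangle$ and so $x = ws \in \langle \K(F) \cap R\rangle$. The ``in particular'' clause is the immediate reading of $\Bt_0(G) = 0$, using that in the abelian group $Z$ the subgroup $\langle \K(H) \cap Z\rangle$ coincides with the set of products of commutators lying in $Z$ (inverses of commutators are again commutators).

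I expect the main obstacle to be the clean translation of the abstract hypothesis $Z \cong \M(G)$ into the concrete presentation-level equality $(\gamma_2(F) \cap R) \cap S = [R,F]$; the finiteness of $G$ is essential here, since it is what turns the canonical Hopf surjection into an isomorphism. Once this (together with $R = (\gamma_2(F) \cap R) \cdot S$) is in hand, the rest of the proof reduces to a standard application of the isomorphism theorems on top of Proposition~\ref{p:hopf}.
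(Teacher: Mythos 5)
Your proof is correct and follows essentially the same route the paper intends: the paper deduces the proposition from the natural identification of $\gamma_2(H)$ with $\gamma_2(F)/[R,F]$ together with Proposition \ref{p:hopf}, and your order-counting step establishing $(\gamma_2(F)\cap R)\cap S=[R,F]$ (hence $\gamma_2(H)\cong\gamma_2(F)/[R,F]$, with $Z$ corresponding to $\M(G)$ and $\langle\K(H)\cap Z\rangle$ to $\langle\K(F)\cap R\rangle/[R,F]$) is precisely that identification, with the details the paper leaves implicit written out.
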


We note here that a special case of Proposition \ref{p:cover} formed one of the crucial steps in proving the main
results of \cite{Bog04} and \cite{Kun08}.

One of the main features of the
homological description of $\B_0(G)$ is a five term exact sequence associated to the short exact sequence
$1\rightarrow N\rightarrow G\rightarrow G/N\rightarrow 1$ of groups. This sequence is an unramified Brauer group
analogue of the well known five term homology sequence, cf \cite[p. 46]{Bro82}.

\begin{theorem}
\label{t:5term}
Let $G$ be a group and $N$ a normal subgroup of $G$. Then we have the following exact sequence:
$$\Bt_0(G)\longrightarrow \Bt_0(G/N)\longrightarrow\frac{N}{\langle \K(G)\cap N]\rangle}\longrightarrow 
G^{\rm ab}\longrightarrow
(G/N)^{\rm ab}\longrightarrow 0.$$
\end{theorem}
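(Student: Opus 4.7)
The plan is to mimic the classical proof of the five-term homology sequence, substituting Proposition \ref{p:hopf} for the usual Hopf formula. Fix a free presentation $G=F/R$ and let $S$ be the preimage of $N$ in $F$, so that $R\le S\trianglelefteq F$, $N\cong S/R$, and $G/N\cong F/S$. Proposition \ref{p:hopf} then identifies $\Bt_0(G)$ with $(\gamma_2(F)\cap R)/\langle \K(F)\cap R\rangle$ and $\Bt_0(G/N)$ with $(\gamma_2(F)\cap S)/\langle \K(F)\cap S\rangle$. Since every commutator of $G$ lying in $N$ lifts to a commutator of $F$ lying in $S$, one has $\langle \K(G)\cap N\rangle = \langle \K(F)\cap S\rangle R/R$, so the middle term becomes $S/\langle \K(F)\cap S\rangle R$. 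Finally $G^{\rm ab}=F/\gamma_2(F)R$ and $(G/N)^{\rm ab}=F/\gamma_2(F)S$. I would take all five maps to be the obvious ones induced by inclusions and canonical quotients; well-definedness is immediate from $R\le S$ and $\langle \K(F)\cap S\rangle\le\gamma_2(F)\cap S$.

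Exactness at the two right-hand spots is routine. The last arrow is plainly surjective, and its kernel $\gamma_2(F)S/\gamma_2(F)R$ is exactly the image of $S/\langle \K(F)\cap S\rangle R\to F/\gamma_2(F)R$. Exactness at $N/\langle \K(G)\cap N\rangle$ reduces to the equality $S\cap\gamma_2(F)R=(S\cap\gamma_2(F))R$, which is the Dedekind modular law applied to the chain $R\le S\le F$; dividing by $\langle \K(F)\cap S\rangle R$ turns the right-hand side into the image coming from $\Bt_0(G/N)$.

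The main step is exactness at $\Bt_0(G/N)$, which I would reduce to the identity
\[
(\gamma_2(F)\cap S)\cap\langle \K(F)\cap S\rangle R \;=\; (\gamma_2(F)\cap R)\,\langle \K(F)\cap S\rangle
\]
inside $F$. The inclusion $\supseteq$ is clear, since $\gamma_2(F)\cap R\subseteq R$ and $\langle \K(F)\cap S\rangle\subseteq\gamma_2(F)\cap S$. For $\subseteq$, write any $x$ on the left as $x=kr$ with $k\in\langle \K(F)\cap S\rangle$ and $r\in R$; then $r=k^{-1}x$ lies in $\gamma_2(F)$ because $k$ and $x$ do, so $r\in\gamma_2(F)\cap R$ and $x$ is on the right. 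Modding out by $\langle \K(F)\cap S\rangle$ transforms this equality into the desired matching of the kernel of $\Bt_0(G/N)\to N/\langle \K(G)\cap N\rangle$ with the image of $\Bt_0(G)\to\Bt_0(G/N)$.

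The most delicate point is bookkeeping: unpacking $\K(G)\cap N$ in terms of lifts to $F$, and then the short decomposition argument above. Once these are in hand, the remaining verifications are a formal diagram chase through the five subquotients of $F$.
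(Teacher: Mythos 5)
Your proposal is correct and follows essentially the same route as the paper: fix a free presentation $G=F/R$, translate all five terms into subquotients of $F$ via Proposition \ref{p:hopf} and the identity $\langle\K(G)\cap N\rangle=\langle\K(F)\cap S\rangle R/R$, and verify exactness by the modular-law-type subgroup identities you state (your key identity at $\Bt_0(G/N)$ is exactly the paper's computation that $\ker\sigma=\im\rho^\sharp$). Nothing essential is missing.
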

\begin{proof}
Let $G$ have a free presentation $G=F/R$, and let $SR/R$ be the corresponding free presentation of $N$. 
Then Proposition
\ref{p:hopf} implies that $\Bt_0(G)\cong (\gamma _2(F)\cap R)/\langle \K(F)\cap R\rangle$ and 
$\Bt_0(G/N)\cong (\gamma _2(F)\cap RS)/\langle \K(F)\cap RS\rangle$. 
The canonical epimorhism $\rho :G\to G/N$ induces
a homomorphism $\rho ^\sharp :\Bt_0(G)\to \Bt_0(G/N)$. From the above Hopf formulae it follows that
$$\ker\rho ^\sharp =\frac{R\cap \langle \K(F)\cap RS\rangle}{\langle \K(F)\cap R\rangle}$$
and
$$\im\rho ^\sharp = \frac{\gamma _2(F)\cap \langle \K(F)\cap RS\rangle R}{\langle \K(F)\cap RS\rangle}.$$
It is straightforward to verify that $\langle \K(G)\cap N\rangle =\langle \K(F)\cap RS\rangle R/R$, therefore
$N/\langle \K(G)\cap N\rangle\cong RS/\langle \K(F)\cap RS\rangle R$. Thus there is a natural map
$\sigma :\Bt_0(G/N)\to N/\langle \K(G)\cap N\rangle$. We have that $\ker\sigma =\im\rho ^\sharp$ and
$$\im\sigma =\frac{(\gamma _2(F)\cap RS)R}{\langle \K(F)\cap RS\rangle R}=
\frac{\gamma _2(F)R\cap RS}{\langle \K(F)\cap RS\rangle R}=\frac{\gamma _2(G)\cap N}
{\langle \K(G)\cap N\rangle}.$$
Furthermore, there is a natural map $\pi :N/\langle \K(G)\cap N\rangle\to G^{\rm ab}$ 
whose kernel is equal to $\im\sigma$, and
$\im \pi =N\gamma _2(G)/\gamma _2(G)$. Finally, there 
is a surjective homomorphism $G^{\rm ab}\to (G/N)^{\rm ab}$
whose kernel is equal to $\im\pi$. From here our assertion readily follows.
\end{proof}

The proof of Theorem \ref{t:5term} also yields another exact sequence that is 
an analogue of the corresponding sequence
for Schur multipliers obtained by Blackburn and Evens \cite{Bla79}. More precisely, we have:

\begin{proposition}
\label{p:blaevens}
Let $G$ be a group given by a free presentation $G=F/R$ and let $N=SR/R$ be a normal subgroup of $G$. 
Then the sequence
$$0\rightarrow\frac{R\cap \langle \K(F)\cap RS\rangle}{\langle \K(F)\cap R\rangle}\rightarrow
\Bt_0(G)\rightarrow \Bt_0(G/N)\rightarrow \frac{N\cap\gamma _2(G)}{\langle \K(G)\cap N]\rangle}\rightarrow 0$$
is exact.
\end{proposition}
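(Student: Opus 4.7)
The plan is to extract this sequence directly from the computations already carried out in the proof of Theorem \ref{t:5term}; essentially no new content is needed, only a rearrangement of what was established there.

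First, I would retain the same setup as in that proof: fix a free presentation $G = F/R$, write $N = SR/R$, and use Proposition \ref{p:hopf} to identify
$$\Bt_0(G) \cong \frac{\gamma_2(F) \cap R}{\langle \K(F) \cap R\rangle}, \qquad \Bt_0(G/N) \cong \frac{\gamma_2(F) \cap RS}{\langle \K(F) \cap RS\rangle}.$$
The canonical projection $\rho : G \to G/N$ induces the homomorphism $\rho^\sharp : \Bt_0(G) \to \Bt_0(G/N)$ appearing in the sequence. In the proof of Theorem \ref{t:5term} it is shown that
$$\ker \rho^\sharp = \frac{R \cap \langle \K(F) \cap RS\rangle}{\langle \K(F) \cap R\rangle},$$
which simultaneously identifies the leftmost arrow of the claimed sequence as the inclusion of $\ker \rho^\sharp$ into $\Bt_0(G)$ and establishes exactness at the first two positions.

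Next I would invoke the natural map $\sigma : \Bt_0(G/N) \to N/\langle \K(G) \cap N\rangle$ constructed in that same proof. There it is verified that $\ker \sigma = \im \rho^\sharp$, giving exactness at $\Bt_0(G/N)$, and that
$$\im \sigma = \frac{\gamma_2(G) \cap N}{\langle \K(G) \cap N\rangle}.$$
Corestricting $\sigma$ to its image therefore produces a surjection
$$\Bt_0(G/N) \twoheadrightarrow \frac{N \cap \gamma_2(G)}{\langle \K(G) \cap N\rangle}$$
with the same kernel as $\sigma$, which yields exactness at the last two positions.

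Since all four instances of exactness are already contained in the proof of Theorem \ref{t:5term}, I do not foresee any real obstacle; the argument amounts to collecting the identifications of $\ker \rho^\sharp$, $\ker \sigma = \im \rho^\sharp$, and $\im \sigma$ into a single four-term exact sequence. The only point that deserves an explicit remark is that restricting the codomain of $\sigma$ from $N/\langle \K(G)\cap N\rangle$ to its image does not affect the kernel, which is automatic and completes the verification.
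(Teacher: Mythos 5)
Your proposal is correct and matches the paper exactly: the paper gives no separate argument for Proposition \ref{p:blaevens}, merely noting that it follows from the proof of Theorem \ref{t:5term}, and your extraction of the identifications of $\ker\rho^\sharp$, $\ker\sigma=\im\rho^\sharp$, and $\im\sigma=(\gamma_2(G)\cap N)/\langle \K(G)\cap N\rangle$ from that proof, followed by corestricting $\sigma$ to its image, is precisely the intended verification.
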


The above result
has the following group theoretical consequence:

\begin{corollary}
\label{c:commfitt}
Let $G$ be a finite group and $S$ the solvable radical of $G$, i.e., the largest solvable normal subgroup of $G$. 
Then $S\cap \gamma _2(G)=\langle S\cap \K(G)\rangle$.
\end{corollary}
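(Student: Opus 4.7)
The containment $\langle S\cap \K(G)\rangle\le S\cap \gamma_2(G)$ is immediate, since every element of $\K(G)$ lies in $\gamma_2(G)$. For the reverse inclusion, the plan is to feed the normal subgroup $N=S$ into the five term exact sequence of Theorem~\ref{t:5term}, obtaining
\[
\Bt_0(G)\longrightarrow \Bt_0(G/S)\longrightarrow \frac{S}{\langle \K(G)\cap S\rangle}\stackrel{\pi}{\longrightarrow} G^{\rm ab}\longrightarrow (G/S)^{\rm ab}\longrightarrow 0.
\]
The map $\pi$ is induced by the inclusion $S\hookrightarrow G$ followed by the abelianisation of $G$, so its kernel is exactly $(S\cap \gamma_2(G))/\langle \K(G)\cap S\rangle$. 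By exactness at $S/\langle \K(G)\cap S\rangle$, this kernel agrees with the image of the preceding arrow out of $\Bt_0(G/S)$. Thus the corollary reduces to the assertion $\Bt_0(G/S)=0$.

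The quotient $G/S$ is a finite group with trivial solvable radical, hence its socle is a direct product $T_1\times\cdots\times T_k$ of non-abelian finite simple groups, and $G/S$ embeds into a wreath-type extension of $\Aut(T_1)\times\cdots\times\Aut(T_k)$ by permutations of isomorphic factors. I would extract the vanishing of $\Bt_0(G/S)$ from Kunyavski\u\i's theorem quoted in the introduction, namely that $\B_0(H)=0$, equivalently $\Bt_0(H)=0$ by Theorem~\ref{t:main}, for every almost simple or quasisimple group $H$. The concrete mechanism is Proposition~\ref{p:cover}: fixing a Schur cover $\widetilde{H}$ of $G/S$, it is enough to show that every element of the central subgroup $\M(G/S)\le \gamma_2(\widetilde{H})$ can be written as a product of commutators of pairs of elements of $\widetilde{H}$ that themselves commute.

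The main obstacle is precisely this last step: bootstrapping Kunyavski\u\i's almost simple case to the full class of finite groups with trivial solvable radical. Two structural layers must be absorbed. First, the product structure of the socle, where one uses that each $T_i$ is perfect so that $\M(T_1\times\cdots\times T_k)$ splits as a direct sum of the $\M(T_i)$ without K\"unneth-type cross terms; Kunyavski\u\i\ then handles each summand through its own covering group. Second, the outer layer permuting the factors, where one argues that lifts to $\widetilde{H}$ of coset representatives above the socle act in a sufficiently controlled way that any commutator they participate in already lies in the subgroup generated by commutators of commuting pairs drawn from the individual factors. Assembling these decompositions supplies the required expression for every element of $\M(G/S)$, forces $\Bt_0(G/S)=0$, and closes the argument.
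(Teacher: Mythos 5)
Your reduction step is essentially the paper's: feeding $N=S$ into the exact sequence and observing that everything hinges on $\Bt_0(G/S)=0$ is exactly how the paper proceeds (it uses the Blackburn--Evens-type sequence of Proposition \ref{p:blaevens}, which comes out of the same computation as Theorem \ref{t:5term} and identifies the relevant term as $(S\cap\gamma_2(G))/\langle\K(G)\cap S\rangle$; your identification of $\ker\pi$ in the five term sequence is the same thing). That half of your argument is fine.

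The gap is in how you obtain $\Bt_0(G/S)=0$. The paper does not derive it from the quasisimple/almost simple case: it invokes Kunyavski\u{\i}'s theorem \cite{Kun08} directly for semisimple groups, i.e.\ finite groups with no nontrivial abelian normal subgroup, which is precisely what $G/S$ is. You instead take only the quasisimple/almost simple case as input and propose to bootstrap through the socle decomposition, but the two ``structural layers'' you describe are exactly where the difficulty lives and neither is proved. Concretely: (i) $\M(G/S)$ is not simply assembled from the $\M(T_i)$ of the socle factors --- there are contributions coming from the quotient of $G/S$ by its socle and from the embedding into $\Aut(T_1\times\cdots\times T_k)$, so the criterion of Proposition \ref{p:cover} cannot be checked factor by factor without substantial further work; and (ii) the assertion that commutators involving lifts of elements above the socle ``already lie in the subgroup generated by commutators of commuting pairs drawn from the individual factors'' is stated, not argued, and there is no general principle available here by which vanishing of $\Bt_0$ for normal subgroups and quotients forces vanishing for the extension --- the whole point of Proposition \ref{p:blaevens} is that this is not automatic. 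As written, your second step is a plan whose hardest point you explicitly leave open; it amounts to re-proving the cited vanishing result for groups with trivial solvable radical. Either quote that result in the form the paper uses, or supply a genuine argument for the outer (permutation/automorphism) layer.
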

\begin{proof}
The factor group $G/S$  does not contain proper nontrivial abelian normal subgroups, i.e., it is semisimple.
By a result of  Kunyavski\u\i \ \cite{Kun08} we
conclude that $\Bt_0(G/S)=0$. From Proposition \ref{p:blaevens} we get the desired result.
\end{proof}
%%%%%%%%%%%%%%%%%%%%%%%%%%%%%%%%%%%%%%%%%%%%%%%%%%%%%%%%%%%%%%%%%%%%%%%
\section{The `commutativity-preserving' nonabelian exterior product of groups}
\label{s:wedge}

\noindent
The nonabelian exterior square of a group encodes crucial information on the Schur multiplier of the group. In this section we 
introduce a related construction that plays a similar role when considering the functor $\Bt_0$.

Let $G$ be a group and $M$ and $N$ normal subgroups of $G$. We form the group $M\cwedge N$, generated by the symbols
$m\cwedge n$, where $m\in M$ and $n\in N$, subject to the following relations:
\begin{align}
mm'\cwedge n &= ({}^mm'\cwedge {}^mn)(m\cwedge n),\notag \\
m\cwedge nn' &= (m\cwedge n)({}^nm\cwedge {}^nn'), \label{eq:cwed}\\
x\cwedge y & = 1,\notag
\end{align}
for all $m,m'\in M$ $n,n'\in N$, and all $x\in M$ and $y\in N$ with $[x,y]=1$. If we denote $\M_0(M,N)=\langle m\wedge n\mid
m\in M,n\in N,[m,n]=1\rangle$, then we have that $M\cwedge N=(M\wedge N)/\M_0(M,N)$.

Let $L$ be a group. A function $\phi : M \times N \to L$ is called a {\it $\Bt_0$-pairing} if for all $m, m'\in M$, $n, n' \in N$,
and for all $x\in M$, $y\in N$ with $[x,y]=1$,
\begin{align*}
\phi (mm', n) &= \phi ({}^mm', {}^mn)\phi (m,n),\\
\phi (m,nn') &= \phi (m,n)\phi ({}^nm, {}^nn'),\\
\phi (x,y) &=1.
\end{align*}
Clearly a $\Bt_0$-pairing $\phi$ determines a unique homomorphism of groups 
$\phi ^*: M\cwedge N\to L$ such that $\phi ^*(m\cwedge n)=\phi (m,n)$
for all $m\in M$, $n\in N$. An example of a $\Bt_0$-pairing is the commutator map $M\times N\to [M,N]$. It induces 
a homomorphism $\tilde{\kappa}:M\cwedge N\to [M,N]$ such that $\tilde{\kappa}(m\cwedge n)=[m,n]$ for all $m\in M$ and
$n\in N$. We denote the kernel of this homomorphism by $\Bt_0(M,N)$.

In the case when $M=N=G$, we have that $\M_0(G,G)=\M_0(G)$ and $\Bt_0(G,G)=\Bt_0(G)$. We therefore
have a central extension
$$\xymatrix{ 0\ar[r] & \Bt_0(G)\ar[r] & G\cwedge G \ar[r]^{\tilde{\kappa}} & \gamma _2(G)\ar[r] & 1},$$
where $\tilde{\kappa}$ is the commutator map. Thus one can interpret $\Bt_0(G)$ as a measure of the extent to which
relations among commutators in $G$ fail to be consequences of `universal'
commutator relations given by the images of relations \eqref{eq:cwed} under the commutator map.

\begin{proposition}
\label{p:curlquot}
Let $M$ and $N$ be normal subgroups of a group $G$. Let $K\le M\cap N$ be a normal subgroup of $G$. Then
$M/K\cwedge N/K\cong (M\cwedge N)/J$, where
$J=\langle m\cwedge n\mid m\in M, n\in N, [m,n]\in K\rangle$.
\end{proposition}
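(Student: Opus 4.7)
The plan is to exhibit mutually inverse homomorphisms between $(M \cwedge N)/J$ and $M/K \cwedge N/K$ by repeatedly invoking the universal property of $\Bt_0$-pairings.

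For the map $\bar\phi^\ast \colon (M\cwedge N)/J \to M/K \cwedge N/K$, I would define $\phi(m,n) = mK \cwedge nK$ on $M \times N$ and check that it is a $\Bt_0$-pairing in the sense of \eqref{eq:cwed}. The first two relations are immediate from the corresponding relations in $M/K \cwedge N/K$ (and the fact that the inner action of $G$ descends to $G/K$); the vanishing on commuting pairs is automatic because $[m,n]=1$ forces $[mK,nK]=1$. By the universal property this produces $\phi^\ast \colon M \cwedge N \to M/K \cwedge N/K$, and on a generator $m \cwedge n$ of $J$ we have $[m,n]\in K$, hence $[mK,nK]=1$ in $G/K$, which kills $\phi^\ast(m\cwedge n)$. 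Thus $J \le \ker \phi^\ast$ and $\bar\phi^\ast$ is well defined.

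For the reverse direction, I would define $\psi(mK,nK) = (m \cwedge n)J$ on $M/K \times N/K$. The main technical point, and the step I expect to be the only real obstacle, is well-definedness: if $m' = mk_1$ and $n' = nk_2$ with $k_1,k_2 \in K$, one must show $m' \cwedge n' \equiv m \cwedge n \pmod J$. Using relation \eqref{eq:cwed} to expand $(mk_1)\cwedge n' = ({}^m k_1 \cwedge {}^m n')(m\cwedge n')$ and $m \cwedge (nk_2) = (m\cwedge n)({}^n m \cwedge {}^n k_2)$, one is reduced to showing that both error terms lie in $J$. Since $K$ is normal in $G$, conjugates of $k_1,k_2$ by elements of $G$ remain in $K$, and $[{}^m k_1, {}^m n'] = {}^m[k_1,n'] \in K$ because $[k_1, n'] \in K$ by normality of $K$; the same reasoning handles the second error term. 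Once well-definedness is established, checking that $\psi$ is a $\Bt_0$-pairing is direct: the two bilinear-type relations follow from the same relations in $M \cwedge N$, and if $[mK,nK]=1$ in $G/K$ then $[m,n]\in K$, whence $m \cwedge n \in J$. The universal property yields $\psi^\ast \colon M/K \cwedge N/K \to (M\cwedge N)/J$.

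Finally, I would observe that on generators $\bar\phi^\ast$ and $\psi^\ast$ are mutually inverse: $\psi^\ast(\bar\phi^\ast((m \cwedge n)J)) = \psi^\ast(mK \cwedge nK) = (m \cwedge n)J$ and symmetrically. This gives the desired isomorphism $M/K \cwedge N/K \cong (M \cwedge N)/J$. The entire argument is formally parallel to the standard quotient formula for the nonabelian exterior square (from \cite{Bro87}), with the added book-keeping that commuting-pair relations must be tracked through the passage to the quotient.
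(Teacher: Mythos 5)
Your proposal is correct and follows essentially the same route as the paper: both directions are obtained from the universal property of $\Bt_0$-pairings, namely $(m,n)\mapsto mK\cwedge nK$ and $(mK,nK)\mapsto (m\cwedge n)J$, and the two induced maps are checked to be mutually inverse on generators. The only difference is that you spell out the well-definedness of the second pairing (using normality of $K$ to place the error terms ${}^m k_1\cwedge{}^m n'$ and ${}^n m\cwedge{}^n k_2$ in $J$), a verification the paper leaves to the reader.
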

\begin{proof}
The map $M/K\times N/K\to (M\cwedge N)/J$ given by $(mK,nK)\mapsto (m\cwedge n)J$ is well defined and
a $\Bt_0$-pairing, hence it induces a homomorphism $\varphi :M/K\cwedge N/K\to (M\cwedge N)/J$. On the other hand,
we have a canonical $\Bt_0$-pairing $M\times N\to M/K\cwedge N/K$ that induces a homomorphism $M\cwedge N\to M/K\cwedge N/K$.
Under this homomorphism $J$ gets mapped to 1, hence we have a homomorphism $\psi :(M\cwedge N)/J\to M/K\cwedge N/K$ whose
inverse is $\varphi$.
\end{proof}

Schur \cite{Sch07}, cf. also \cite[Kapitel V]{Hup67}, developed the theory of stem extensions. Here we indicate the role $\Bt_0(G)$ and $G\cwedge G$
within the theory.
Let $G$ be a finite group and denote by $(E,\pi ,A)$ the central extension
\begin{equation}
\label{eq:centext}
\xymatrix{ 1\ar[r] & A\ar[r] & E\ar[r]^\pi & G\ar[r] & 1}
\end{equation}
of $G$. If the transgression homomorphism $\tra :\Hom (A,\QZ )\to \HH^2(G,\QZ )$ is injective, then
we say that $(E,\pi ,A)$ is a {\it stem extension} of $G$, and that the group $B=\im\tra$ is {\it produced} by $(E,\pi ,A)$. One can show
that a central extension $(E,\pi ,A)$ of $G$ is a stem extension if and only if $A\le \gamma _2(E)$.
 In this case we have
that $B\cong A$.

By a well
known result of Schur \cite{Sch07}, every subgroup of $\HH^2(G,\QZ )$ is produced by some stem extension of $G$. This, in particular,
applies to $\B_0(G)$. In terms of its homological counterpart $\Bt_0(G)$, we obtain the following result.

\begin{theorem}
\label{t:stem}
Let $G$ be a finite group.
Let $(E,\pi ,A)$ be a stem extension that produces $\Bt_0(G)$. Then $|\gamma _2(E)|=|G\cwedge G|$. Furthermore, there exists a stem
extension $(E,\pi ,A)$ of $G$ producing $\Bt_0(G)$ such that $\gamma _2(E)\cong G\cwedge G$.
\end{theorem}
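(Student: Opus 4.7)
The plan is to compare the given stem extension with the exact sequence $0\to \Bt_0(G)\to G\cwedge G\to \gamma_2(G)\to 1$ displayed just before the theorem, and to produce the desired extension by taking the quotient of a Schur covering group of $G$ by $\M_0(G)$.

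\textbf{Cardinality.} Since $(E,\pi,A)$ produces $\Bt_0(G)$, the transgression $\tra\colon\Hom(A,\QZ)\to\HH^2(G,\QZ)$ is injective with image $\B_0(G)$. Theorem~\ref{t:main} gives $\B_0(G)\cong\Bt_0(G)$, and Pontryagin duality for the finite abelian group $A$ then yields $A\cong\Bt_0(G)$. The stem condition $A\le\gamma_2(E)$ combined with $\pi(\gamma_2(E))=\gamma_2(G)$ forces $\gamma_2(E)/A\cong\gamma_2(G)$, whence
$$|\gamma_2(E)|=|A|\cdot|\gamma_2(G)|=|\Bt_0(G)|\cdot|\gamma_2(G)|=|G\cwedge G|,$$
using the displayed short exact sequence for the final equality.

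\textbf{Construction.} For the second assertion, fix a Schur covering group $1\to Z\to H\to G\to 1$ with $Z\cong\M(G)$ and $Z\le\gamma_2(H)$. Via the canonical identification $\gamma_2(H)\cong G\wedge G$ from \cite{Bro87}, the subgroup $\M_0(G)\le\M(G)=Z$ sits as a central subgroup of $H$ lying in $\gamma_2(H)$. Set $E=H/\M_0(G)$ with induced surjection $\bar\pi\colon E\to G$, and put $\bar Z=Z/\M_0(G)\cong\Bt_0(G)$. Then $\bar Z$ is central in $E$, and since $\M_0(G)\le\gamma_2(H)$,
$$\gamma_2(E)=\gamma_2(H)/\M_0(G)\cong (G\wedge G)/\M_0(G)=G\cwedge G.$$
In particular $\bar Z\le\gamma_2(E)$, so $(E,\bar\pi,\bar Z)$ is automatically a stem extension.

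\textbf{Verification and main obstacle.} What remains, and is the only delicate step, is to check that this stem extension \emph{produces} $\Bt_0(G)$. I would rerun the transgression construction from the proof of Theorem~\ref{t:main}: a section $\mu\colon G\to E$ of $\bar\pi$ lifts through $H\to E$ to a set map $\tilde\mu\colon G\to H$, and the $Z$-valued factor set $f(x,y)=\tilde\mu(x)\tilde\mu(y)\tilde\mu(xy)^{-1}$ descends to the factor set of $(E,\bar\pi,\bar Z)$. Hence for any $\bar\varphi\in\Hom(\bar Z,\QZ)$, pulled back to $\varphi\in\Hom(Z,\QZ)$ along $Z\twoheadrightarrow\bar Z$, we have $\tra_E(\bar\varphi)=\tra_H(\varphi)$. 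Schur's theorem says $\tra_H$ is bijective, and the argument in Theorem~\ref{t:main} shows that the homomorphisms $\varphi\in\Hom(Z,\QZ)$ vanishing on $\M_0(G)=\langle x\wedge y\mid[x,y]=1\rangle$ are \emph{exactly} those whose transgression lies in $\B_0(G)$. Thus the image of $\tra_E$ equals $\B_0(G)$, so $(E,\bar\pi,\bar Z)$ produces $\Bt_0(G)$ and the identification $\gamma_2(E)\cong G\cwedge G$ from the construction yields the claim. The substantive issue throughout is this duality bookkeeping between $\Bt_0(G)$ and $\B_0(G)$; once Theorem~\ref{t:main} is invoked carefully the quotient $H/\M_0(G)$ is essentially forced upon us.
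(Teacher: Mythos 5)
Your proof is correct, but it follows a noticeably different route from the paper's, and in places a cleaner one. For the first assertion the paper works inside a free presentation $G=F/R$: it uses the Frattini argument to build an epimorphism $\sigma\colon F\to E$, shows $[R,F]\le\ker\sigma\le R$, and compares indices via the Hopf-type formula to get $|\gamma_2(E)|=|\gamma_2(F):\langle\K(F)\cap R\rangle|=|G\cwedge G|$; you bypass all of this with the direct count $|\gamma_2(E)|=|A|\cdot|\gamma_2(G)|$ (valid because $A\le\gamma_2(E)$ gives $\gamma_2(E)/A=\gamma_2(E/A)=\gamma_2(G)$) together with $A\cong\Bt_0(G)$ and the exact sequence $\Bt_0(G)\rightarrowtail G\cwedge G\twoheadrightarrow\gamma_2(G)$ — the same fact the paper's longer bookkeeping ultimately re-derives, so your shortcut is a genuine simplification (note it only needs $|A|=|\Bt_0(G)|$, and finiteness of everything is guaranteed since $G$ and $\M(G)$ are finite). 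For the construction, the paper stays with the free presentation: it sets $B=(R\cap\gamma_2(F))/\langle\K(F)\cap R\rangle$, $T=R/\langle\K(F)\cap R\rangle$, observes $T/B$ is free abelian so $B$ is complemented, and takes $E=F/C$ for a preimage $C$ of a complement; you instead quotient a Schur cover $H$ by the image of $\M_0(G)$, which (since a cover is $F/C_0$ with $C_0$ a complement over $[R,F]$, and the image of $\M_0(G)$ in $H$ is $\langle\K(H)\cap Z\rangle$, cf.\ Proposition \ref{p:cover}) yields an extension of exactly the paper's type while avoiding the explicit complement argument. What your version buys in addition is the explicit verification that the constructed extension actually \emph{produces} $\B_0(G)$, i.e.\ that $\im\tra_E=\B_0(G)$: you check compatibility of transgressions under $H\to E$ and invoke the correspondence from the proof of Theorem \ref{t:main} between homomorphisms of $\M(G)$ vanishing on $\M_0(G)$ and classes in $\B_0(G)$; the paper only checks the stem condition and $\gamma_2(E)\cong G\cwedge G$, leaving this point implicit (it does hold for its $E$ as well, since $C\cap\gamma_2(F)=\langle\K(F)\cap R\rangle$). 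So: same underlying objects, but a more elementary counting argument in the first half and a covering-group packaging plus an explicit production check in the second.
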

\begin{proof}
Let $G=F/R$ be a free presentation of $G$. By Proposition \ref{p:curlquot} we have that
$G\cwedge G\cong (F\cwedge F)/J$, where $J=\langle x\cwedge y\mid x,y\in F, [x,y]\in R\rangle$. Since the centralizer of every
nontrivial element of $F$ is cyclic, we have $F\cwedge F=F\wedge F$. As $\HH_2(F,\mathbb{Z})=0$, the commutator map
$\kappa:F\wedge F\to \gamma _2(F)$ is an isomorphism. From here we conclude that $G\cwedge G\cong
\gamma _2(F)/\langle \K(F)\cap R\rangle$.

Let $(E,\pi ,A)$ be a stem extension of $G=F/R$ producing $\Bt_0(G)$. We have that $A\cong \Bt_0(G)$.
Let $\{ x_1,\ldots ,x_n\}$ be the set of free generators of $F$. For every $1\le i\le n$
choose $e_i\in E$ such that $\pi (e_i)=x_iR$. As $A\le Z(E)\cap \gamma _2(E)$, we conclude that $A$ is contained in the Frattini
subgroup $\Frat (E)$ of $E$, cf. \cite[Satz III.3.12]{Hup67}. Thus $e_1,\ldots ,e_n$ generate $E$. From here it follows that there is an epimorphism
$\sigma :F\to E$ such that $\sigma (x_i)=e_i$ for all $i=1,\ldots ,n$. Denote $C=\ker\sigma$. It is straightforward to see that $C\le R$.
Since $\pi(\sigma(x))=xR$ for every $x\in F$, we have that $\sigma (R)=A$ and $\sigma ^{-1}(A)=R$. From here we obtain
$[R,F]\le C$. We claim that
$\sigma  (R\cap\gamma _2(F))=A$. For, if $a\in A=A\cap \gamma _2(E)=\sigma (R)\cap\sigma (\gamma _2(F))$, then we can write
$a=\sigma (r)=\sigma (\omega )$ for some $r\in R$ and $\omega\in\gamma _2(F)$. It follows that $\omega r^{-1}\in C\le R$, hence
$\omega\in R\cap\gamma _2(F)$, as required. If $\bar{\sigma}$ is the restriction of $\sigma$ to $R\cap\gamma _2(F)$, then
$\ker\bar{\sigma}=C\cap \gamma _2(F)$. Therefore we have
$(R\cap\gamma _2(F))/(C\cap\gamma _2(F))\cong A\cong (R\cap \gamma _2(F))/\langle R\cap \K(F)\rangle$. This in particular shows that
$|C\cap\gamma _2(F):[R,F]|=|\langle R\cap \K(F)\rangle :[R,F]|$. From here we obtain
$|\gamma _2(E)|=|\gamma _2(F):C\cap\gamma _2(F)|=|\gamma _2(F):[R,F]|/|C\cap\gamma _2(F):[R,F]|=
|\gamma _2(F):[R,F]|/|\langle R\cap \K(F)\rangle :[R,F]|=|\gamma _2(F):\langle R\cap \K(F)\rangle |=|G\cwedge G|$.

It remains to construct a stem extension $(E,\pi ,A)$ of $G=F/R$ producing $\Bt_0(G)$ such that $\gamma _2(E)\cong G\cwedge G$.
Denote $B=(R\cap \gamma _2(F))/\langle R\cap \K(F)\rangle$ and $T=R/\langle R\cap \K(F)\rangle$. Then $T/B\cong R/(R\cap\gamma _2(F))$
is free abelian, hence $B$ is complemented in $T$. Denote its complement by $\bar{C}=C/\langle R\cap \K(F)\rangle$, and put
$E=F/C$, $A=R/C$. Let $\pi :E\to G$ be the canonical epimorphism. Then $\ker\pi =A$. As $[R,F]\le \langle R\cap \K(F)\rangle\le C$,
it follows that $(E,\pi ,A)$ is a central extension of $G$. We have $A\cong T/\bar{C}=B\bar{C}/\bar{C}\cong C(R\cap\gamma _2(F))/C$,
therefore $A\le\gamma _2(E)$. This shows that $(E,\pi ,A)$ is a stem extension of $G$. As 
$\gamma _2(E)\cong \gamma _2(F)/(C\cap \gamma _2(F))=\gamma _2(F)/(C\cap (R\cap \gamma _2(F)))=\gamma _2(F)/\langle R\cap \K(F)\rangle
\cong G\cwedge G$, the assertion is proved.
\end{proof}

If a group $G$ is perfect, then $G\wedge G$ is the universal central extension of $G$. This can be deduced readily, cf. \cite[Theorem 5.7]{Mil71}.
A similar description can be obtained for $G\cwedge G$. We say that a central extension $(E,\pi ,A)$ of a group $G$
is {\it commutativity-preserving} ({\it CP}) if commuting elements of $G$ 
lift to commuting elements in $E$. A CP extension $(U,\phi ,A)$ of a group $G$ is said to be {\it CP-universal} if
for every CP extension $(E,\psi ,B)$ of  $G$ there exists a homomorphism $\chi :U\to E$ that factors through $G$, i.e., 
$\psi\chi =\phi$. It is straightforward to see that a group $G$ admits, up to isomorphism, at most one CP-universal central extension.

The following results have their direct counterparts in the theory of universal central extensions. 
The proofs follow along the lines of those of \cite[Chapter 5]{Mil71}.

\begin{proposition}
\label{p:CP1}
A CP extension $(U,\phi ,A)$ of a group $G$ is CP-universal if and only if $U$ is perfect, and every CP extension of $U$ splits.
\end{proposition}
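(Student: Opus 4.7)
The plan is to adapt the classical proof of the corresponding statement for universal central extensions (cf.\ \cite[Chapter 5]{Mil71}), replacing ``central'' by ``CP'' throughout and checking that each construction preserves the CP property. Implicitly I will use uniqueness of the CP-universal lift $\chi$; this must be part of (or an immediate consequence of) the definition, since the text asserts that CP-universal extensions are unique up to isomorphism.

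For the forward direction, assuming $(U,\phi,A)$ is CP-universal, I would first prove $U$ is perfect by the classical trick: form the CP extension $U\times U^{\mathrm{ab}}\to G$ via $\phi\circ\pi_1$, observe it is CP because any commuting pair in $G$ lifts by CP-ness of $\phi$ to a commuting pair in $U$ and abelian factors do not affect commutativity, and then exhibit the two lifts $u\mapsto(u,1)$ and $u\mapsto(u,u\gamma_2(U))$. Uniqueness forces $U^{\mathrm{ab}}=1$. To show every CP extension $(E,\psi,B)$ of $U$ splits, I would note that the composite $E\to U\to G$ is a CP central extension of $G$ (a commuting pair in $G$ lifts first through $\phi$ and then through $\psi$); CP-universality produces $\chi:U\to E$ with $\phi\psi\chi=\phi$, and since $\psi\chi$ and $\mathrm{id}_U$ are both lifts of $\phi$, uniqueness gives $\psi\chi=\mathrm{id}_U$, so $\chi$ is a section.

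For the converse, given any CP extension $(E,\psi,B)$ of $G$, I would form the pullback $P=\{(u,e)\in U\times E : \phi(u)=\psi(e)\}$ with projection $\pi_U:P\to U$. This is a central extension of $U$ with kernel $B$, and the key step is checking that it is CP: given $[u_1,u_2]=1$ in $U$, the images $\phi(u_i)$ commute in $G$, so CP-ness of $\psi$ supplies commuting lifts $\tilde e_i\in E$, and then $(u_i,\tilde e_i)\in P$ are commuting lifts of $u_1,u_2$. The splitting hypothesis then yields a section $s:U\to P$, and $\chi:=\pi_E\circ s$ is the required homomorphism $U\to E$ over $G$. Uniqueness of $\chi$ follows from perfectness of $U$: two lifts differ by a map $U\to B$, which is easily seen to be a homomorphism using centrality of $B$, hence is trivial because $B$ is abelian and $U=\gamma_2(U)$.

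The main obstacle will be verifying CP-ness of the pullback $P\to U$ in the converse direction; this is the sole place where the argument genuinely deviates from the classical universal central extension proof, as it requires combining the CP property of $\phi$ with that of $\psi$ through the pullback construction. All other checks (composites of CP extensions remain CP, products with abelian groups stay CP, perfectness forces uniqueness of lifts) are routine once the definitions are unpacked.
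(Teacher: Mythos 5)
Your argument is correct and follows essentially the same route as the paper's proof, which adapts Milnor's Chapter~5: the pullback $U\times_G E$ with the explicit CP check in one direction, and the composite extension together with the $U\times U^{\mathrm{ab}}$ trick (Milnor's Lemma~5.5) and uniqueness of lifts out of a perfect group (Lemma~5.4) in the other. Your remark that uniqueness of the lift $\chi$ must be built into the notion of CP-universality is exactly the reading the paper intends, since it cites Milnor, whose definition of universality includes uniqueness, so this is not a gap.
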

\begin{proof}
Assume first that $U$ is perfect, and that every CP extension of $U$ splits. Let $(E,\psi ,B)$ be an arbitrary CP extension of $G$.
Form $U\times _GE=\{ (u,e)\in U\times E\mid \phi (u)=\psi (e)\}$, and let $\pi :U\times _GE\to U$ be the projection to the first factor. Then
$(U\times _GE,\pi,\ker\pi )$ is a central extension of $U$, obviously a CP one. Thus it splits and therefore the section $\sigma :U\to U\times _GE$
induces a homomorphism $\chi :U\to E$. Since $U$ is perfect, $\chi$ is uniquely determined \cite[Lemma 5.4]{Mil71}.

Conversely, suppose that $(U,\phi ,A)$ is a CP-universal central extension of $G$. Then $U$ is perfect by \cite[Lemma 5.5]{Mil71}. Let 
$(X,\psi ,B)$ be a CP extension of $U$. Then $(X,\phi\psi ,\ker\phi\psi )$ is a central extension of $G$. 
Take $x,y\in G$ with $[x,y]=1$. Since the extension $(U,\phi ,A)$ is CP, $x$ and $y$ have commuting lifts $x',y'\in U$ with respect to $\phi$.
The central extension $(X,\psi ,B)$ of $U$ is also CP, hence $x'$ and $y'$ have commuting lifts $x'',y''\in X$ with respect to $\psi$. This shows
that  $(X,\phi\psi ,\ker\phi\psi )$ is a CP extension of $G$. By the assumption, there exists a homomorphism $\chi :U\to X$ that factors
through $G$. We have that $\psi\chi$ is the identity map, hence the extension $(X,\psi ,B)$ of $G$ splits.
\end{proof}

\begin{proposition}
\label{p:CP2}
A group $G$ admits a CP-universal central extension if and only if it is perfect. In the latter case, 
$(G\cwedge G,\tilde{\kappa},\Bt_0(G))$ is the CP-universal
central extension of $G$.
\end{proposition}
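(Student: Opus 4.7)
The proof splits cleanly into two directions. For necessity, if $G$ admits a CP-universal central extension $(U,\phi,A)$, then Proposition \ref{p:CP1} forces $U$ to be perfect, and $G=U/A$ inherits perfection as a quotient.

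For sufficiency, assume $G$ is perfect. I would verify three properties of $(G\cwedge G,\tilde\kappa,\Bt_0(G))$ in order. First, it is a central extension of $G$: the exact sequence
$$0 \to \Bt_0(G) \to G\cwedge G \xrightarrow{\tilde\kappa} \gamma_2(G) \to 1$$
from Section \ref{s:wedge} gives the needed structure; $\gamma_2(G)=G$ by perfection, and $\Bt_0(G)$ is central as a subgroup of the central $\M(G)$. Second, it is CP: using the presentation $G\cwedge G \cong \gamma_2(F)/\langle \K(F)\cap R\rangle$ from the proof of Theorem \ref{t:stem} (where $G=F/R$), perfection yields $\gamma_2(F)R = F$, so every element of $G$ lifts to some element of $\gamma_2(F)$. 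For commuting $x,y\in G$ with such lifts $u,v\in\gamma_2(F)$, the commutator $[u,v]$ is a single element of $\K(F)$ lying in $R$ (since $[x,y]=1$ in $G$), so $[u,v]\in \K(F)\cap R\subseteq \langle \K(F)\cap R\rangle$, which vanishes in $G\cwedge G$; thus $u,v$ are commuting lifts.

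For universality, let $(E,\psi,B)$ be any CP extension of $G$, and define a map $\phi\colon G\times G\to E$ by $\phi(x,y)=[\tilde x,\tilde y]$ for arbitrary lifts $\tilde x,\tilde y\in E$; centrality of $B$ ensures this is independent of the choice of lifts. I would then check that $\phi$ is a $\Bt_0$-pairing. The first two axioms reduce to the standard commutator identities $[ab,c]={}^a[b,c]\cdot[a,c]$ and $[a,bc]=[a,b]\cdot{}^b[a,c]$, noting that ${}^{\tilde x}[\tilde x',\tilde y]$ is a lift of $[{}^xx',{}^xy]$ and hence by well-definedness represents $\phi({}^xx',{}^xy)$. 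The third axiom, $\phi(x,y)=1$ whenever $[x,y]=1$, is the one place the CP hypothesis on $E$ is used: CP supplies commuting lifts, which together with well-definedness forces $\phi(x,y)=1$. The resulting homomorphism $\phi^*\colon G\cwedge G\to E$ satisfies $\psi\phi^*(x\cwedge y)=[x,y]=\tilde\kappa(x\cwedge y)$ on generators, hence $\psi\phi^*=\tilde\kappa$ everywhere, yielding the factorization required for CP-universality.

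The main obstacle is recognizing that CP is precisely the extra condition which promotes the commutator pairing $(x,y)\mapsto[\tilde x,\tilde y]$ from a crossed pairing (which would land in $G\wedge G$) to a $\Bt_0$-pairing (landing in $G\cwedge G$). This is the conceptual content of the theorem and explains why the quotient by $\M_0(G)$ is both necessary and sufficient to obtain the universal object in the CP setting. Uniqueness of the CP-universal extension up to isomorphism was already remarked before Proposition \ref{p:CP1}.
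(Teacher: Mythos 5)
Your proof is correct; the necessity direction and the verification that $(G\cwedge G,\tilde{\kappa},\Bt_0(G))$ is a CP central extension coincide with the paper's (Proposition \ref{p:CP1} plus passing to the quotient, and the identification $G\cwedge G\cong\gamma_2(F)/\langle \K(F)\cap R\rangle$ with $\gamma_2(F)R=F$ so that commuting elements of $G$ lift into $\gamma_2(F)$ and their commutator lands in $\K(F)\cap R$). Where you genuinely diverge is the universality step. The paper stays with the free presentation: since $F$ is free, $\rho\colon F\to G$ lifts through $\psi\colon E\to G$ to $\tau\colon F\to E$, the CP hypothesis on $E$ forces $\tau$ to kill each generator $[f_1,f_2]\in\K(F)\cap R$ of $K$ (because $\tau(f_1),\tau(f_2)$ differ from commuting lifts only by central factors), and $\tau$ then descends to $\gamma_2(F)/K\cong G\cwedge G$. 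You instead use the universal property of $\cwedge$ directly: the commutator-of-lifts map $(x,y)\mapsto[\tilde x,\tilde y]$ is well defined by centrality of $B$, the first two pairing axioms follow from the standard commutator identities, and the third axiom holds precisely because $E$ is CP, so the induced $\phi^*$ satisfies $\psi\phi^*=\tilde{\kappa}$ on generators. This parallels the crossed-pairing construction in the proof of Theorem \ref{t:main}, avoids a second appeal to freeness, and isolates conceptually that CP is exactly what upgrades a crossed pairing to a $\Bt_0$-pairing; the paper's route, in turn, follows Milnor's template for universal central extensions and reuses the setup of Theorem \ref{t:stem}. Two small phrasing points: in the first axiom you should say that ${}^{\tilde x}[\tilde x',\tilde y]=[{}^{\tilde x}\tilde x',{}^{\tilde x}\tilde y]$ is the commutator of specific lifts of ${}^{x}x'$ and ${}^{x}y$ (being merely a lift of $[{}^xx',{}^xy]$ would not determine the value), and $\Bt_0(G)$ is central in $G\cwedge G$ as the image of the central subgroup $\M(G)$ of $G\wedge G$, not as a subgroup of $\M(G)$.
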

\begin{proof}
Let $G$ be a perfect group. Suppose $G$ is given by the free presentation $G=F/R$, and denote $K=\langle \K(F)\cap R\rangle$.
We have a canonical surjection $\rho :F/K\to F/R$, and $\ker \rho =R/K$ is central $F/K$. By \cite[Lemma 5.6]{Mil71}, the group 
$\gamma _2(F)/K$, together with the appropriate restriction of $\rho$,
is a perfect central extension of $\gamma _2(G)=G$. Let $x$ and $y$ be commuting elements of $G$. Then there exist
$f_1,f_2\in\gamma _2(F)$ such that $x=f_1R$, $y=f_2R$, and $[f_1,f_2]\in \K (F)\cap R\subseteq K$. This shows that the above central
extension of $G$ is CP. We claim that it is also CP-universal. Let $(E,\psi ,A)$ be another CP extension of $G$. As $F$ is free, there
exists a homomorphism $\tau :F\to X$ such that $\psi\tau =\rho$. Take an arbitrary $[f_1,f_2]\in \K (F)\cap R$, where $f_1,f_2\in F$. Since
$\rho (f_1)$ and $\rho (f_2)$ commute, there exist commuting lifts $e_1,e_2\in E$ of these with respect to $\psi$. We can write
$\tau (f_i)=e_iz_i$ for some $z_i\in A\le Z(E)$, $i=1,2$. Then $\tau ([f_1,f_2])=[e_1z_1,e_2z_2]=1$, hence $\tau$ induces a homomorphism
$\chi :F/K\to E$. The restriction of $\chi$ to $\gamma _2(F)/K$ gives the required map. The second statement follows from the proof of
Theorem \ref{t:stem}.

The converse is obvious.
\end{proof}
%%%%%%%%%%%%%%%%%%%%%%%%%%%%%%%%%%%%%%%%%%%%%%%%%%%%%%%%%%%%%%%%%%%%%%%
\section{Nilpotent groups of class 2}
\label{s:class2}

\noindent
The first examples of finite $p$-groups $G$ with $\B_0(G)\neq 0$ were found within the groups that are nilpotent of class 2,
cf. \cite{Sal84,Bog88}. In this section we find a new description of $\B_0(G)$ for an arbitrary group $G$ of class 2. This is
achieved via the group $G\cwedge G$.

Let the group $G$ be nilpotent of class 2 and consider $G\cwedge G$. As $\gamma _2(G)\le Z(G)$, it follows that
$[x,y]\cwedge z=1$ for all $x,y,z\in G$. In particular, $G\cwedge G$ is an abelian group; in fact, it is easy to see
that even the group $G\wedge G$ is abelian. It also follows that
$$
{}^z(x\cwedge y) = {}^zx\cwedge {}^zy
= [z,x]x\cwedge {}^zy
= x\cwedge [z,y]y
= x\cwedge y,
$$
therefore $G$ acts trivially on $G\cwedge G$. Thus the defining relations \eqref{eq:cwed} of $G\cwedge G$ show that
the mapping $G\times G\to G\cwedge G$ defined by $(x,y)\mapsto x\cwedge y$ is bilinear. By the above argument, this map induces
a well defined bilinear mapping $G^{\rm ab}\times G^{\rm ab}\to G\cwedge G$ given by $(\bar{x},\bar{y})\mapsto x\cwedge y$,
where $\bar{x}=x\gamma _2(G)$ and $\bar{y}=y\gamma _2(G)$. This in turn induces a surjective group homomorphism
$\Psi :G^{\rm ab}\wedge G^{\rm ab}\to G\cwedge G$ given by $\bar{x}\wedge\bar{y}\mapsto x\cwedge y$. Similarly,
there is a well defined commutator map $G^{\rm ab}\times G^{\rm ab}\to \gamma _2(G)$ defined by $(\bar{x},\bar{y})\mapsto [x,y]$.
Since $G$ is of class $2$, the latter mapping is also bilinear, hence it induces a surjective homomorphism
$\Phi :G^{\rm ab}\wedge G^{\rm ab}\to\gamma _2(G)$. We have that $\Phi=\tilde{\kappa}\Psi$.

\begin{proposition}
\label{p:B0class2}
Let $G$ be a group of class 2. Then $\Bt_0(G)$ is isomorphic to $\ker\Phi /\ker\Psi$.
\end{proposition}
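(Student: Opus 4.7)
The plan is to exploit the factorization $\Phi = \tilde{\kappa}\Psi$ together with the surjectivity of $\Psi$. By definition of $\Bt_0(G)$ as $\ker\tilde{\kappa}$, it suffices to show that $\Psi$ restricts to a surjection $\ker\Phi \twoheadrightarrow \ker\tilde{\kappa}$ whose kernel is precisely $\ker\Psi$; the first isomorphism theorem then delivers the claimed identification.

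First I would verify the inclusion $\ker\Psi \subseteq \ker\Phi$, which is immediate from the factorization $\Phi = \tilde{\kappa}\Psi$. Next I would establish the equality $\Psi(\ker\Phi) = \ker\tilde{\kappa}$. The forward inclusion follows again from $\Phi = \tilde{\kappa}\Psi$: if $w \in \ker\Phi$, then $\tilde{\kappa}(\Psi(w)) = \Phi(w) = 0$, so $\Psi(w) \in \ker\tilde{\kappa}$. For the reverse inclusion I would pick $z \in \ker\tilde{\kappa} \subseteq G\cwedge G$ and use surjectivity of $\Psi$ (which was established just above the statement, using that both the wedge symbol $x\cwedge y$ and the commutator pairing on $G$ descend to bilinear maps out of $G^{\rm ab}\times G^{\rm ab}$ precisely because $G$ is nilpotent of class $2$) to pick any preimage $w$ of $z$; then $\Phi(w) = \tilde{\kappa}(\Psi(w)) = \tilde{\kappa}(z) = 0$, giving $w \in \ker\Phi$.

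With both statements in hand, the restriction $\Psi|_{\ker\Phi} : \ker\Phi \to \ker\tilde{\kappa}$ is a surjective homomorphism. Its kernel is $\ker\Psi \cap \ker\Phi = \ker\Psi$ by the first step. The first isomorphism theorem then gives
\[
\Bt_0(G) \;=\; \ker\tilde{\kappa} \;\cong\; \ker\Phi / \ker\Psi,
\]
which is the desired conclusion.

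There is no real obstacle: once one has set up $\Phi$, $\Psi$, and $\tilde{\kappa}$ as in the paragraph preceding the proposition, the statement reduces to a routine kernel chase in the commutative triangle. The only point that genuinely uses the class-$2$ hypothesis is the well-definedness and surjectivity of $\Psi$ and $\Phi$ on $G^{\rm ab}\wedge G^{\rm ab}$, and that has already been verified before the proposition is stated.
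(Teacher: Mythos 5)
Your argument is correct and is essentially identical to the paper's proof: both establish $\ker\Psi\le\ker\Phi$ from the factorization $\Phi=\tilde{\kappa}\Psi$, show that the restriction of $\Psi$ to $\ker\Phi$ surjects onto $\ker\tilde{\kappa}$ using surjectivity of $\Psi$, and conclude by the first isomorphism theorem. No gaps; the class-$2$ hypothesis enters exactly where you say it does, in the construction of $\Psi$ and $\Phi$ before the proposition.
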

\begin{proof}
Clearly $\ker\Psi\le\ker\Phi$. Let $\tau$ be the restriction of $\Psi$ to $\ker\Phi$. Take any $k\in\ker\tilde{\kappa}$. There
exists $t\in G^{\rm ab}\wedge G^{\rm ab}$ such that $\Psi (t)=k$. Then $\Phi (t)=0$, hence $\tau$ maps $\ker\Phi$ onto
$\ker\tilde{\kappa}$. Besides, $\ker\tau =\ker\Psi$, thus $\ker\Phi /\ker\Psi\cong\ker\tilde{\kappa}\cong \Bt_0(G)$.
\end{proof}

\begin{remark}
\label{r:ten}
We also have group homomorphisms $\Psi _1:G^{\rm ab}\otimes G^{\rm ab}\to G\cwedge G$
and $\Phi _1:G^{\rm ab}\otimes G^{\rm ab}\to\gamma _2(G)$, defined similarly as above. It can
be shown that $\Bt_0(G)\cong \ker\Phi _1/\ker\Psi _1$.
\end{remark}

Bogomolov \cite{Bog88} found a rather detailed description of $\B_0(G)$ when $G$ is a $p$-group of class 2 such 
that $G^{\rm ab}$ is an elementary abelian $p$-group. Here we propose an alternative approach via the Blackburn-Evens
theory \cite{Bla79}. First note that both
$\gamma _2(G)$ and $G\cwedge G$ are elementary abelian $p$-groups. Denote $V=G^{\rm ab}$ and $W=\gamma _2(G)$. We
can consider $V$ and $W$ as vector spaces over $\mathbb{F}_p$. For $v_1,v_2\in V$ denote $(v_1,v_2)=[x_1,x_2]$ where
$v_i=x_i\gamma _2(G)$. This gives us a bilinear map $V\times V\to W$. Let $X_1$ be the subspace of $V\otimes W$ spanned by all
$v_1\otimes (v_2,v_3)+v_2\otimes (v_3,v_1)+v_3\otimes (v_1,v_2)$, where $v_i\in V$. Furthermore, define the map $f:V\to W$ by
$f(g\gamma _2(G))=g^p$, and let $X_2$ be the  subspace of $V\otimes W$ spanned by all $v\otimes f(v)$, where $v\in V$. Put
$X=X_1+X_2$.
Straightforward verification, cf. \cite{Bla79}, shows that the map $\sigma :V\wedge V\to (V\otimes W)/X$ given
by $\sigma (v_1\wedge v_2)=v_1\otimes f(v_2)+{p\choose 2}v_2\otimes (v_1,v_2)+X$ is well defined and
$\mathbb{F}_p$-linear.
As both $V\wedge V$ and $(V\otimes W)/X$ are elementary abelian $p$-groups, there exists an elementary
abelian $p$-group $M^*$ with $N\le M^*$
such that
\begin{equation}
\label{eq:ex}
N\cong (V\otimes W)/X \; \hbox{ and } \; M^*/N\cong V\wedge V.
\end{equation}

\begin{theorem}
\label{t:B0eltab}
Let $G$ be a finite group of class 2 such that $G^{\rm ab}$ is an elementary abelian $p$-group. 
Under the isomorphisms given by \eqref{eq:ex},
let
$M/N$ correspond to $\ker\Phi$ in $M^*/N$, and $M_0/N$ correspond to $\ker\Psi$ in
$M^*/N$. Then
$\Bt_0(G)\cong M/M_0$ and $\M_0(G)\cong M_0$.
\end{theorem}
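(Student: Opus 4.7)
The first statement is a direct consequence of Proposition \ref{p:B0class2}. Indeed, that proposition gives $\Bt_0(G)\cong \ker\Phi/\ker\Psi$; under the isomorphisms of \eqref{eq:ex} the subgroups $\ker\Phi$ and $\ker\Psi$ of $V\wedge V\cong M^*/N$ correspond to $M/N$ and $M_0/N$, so the third isomorphism theorem yields $\ker\Phi/\ker\Psi \cong (M/N)/(M_0/N)\cong M/M_0$.

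For the second statement, the plan is to combine the Blackburn--Evens computation of the Schur multiplier with a diagram chase. Because $G$ has class $2$ and $G^{\rm ab}$ has exponent $p$, the commutator subgroup $W=\gamma _2(G)$ also has exponent $p$ (from $[x,y]^p=[x^p,y]=1$, since $x^p\in W$ is central), so all the groups in sight are $\mathbb{F}_p$-vector spaces. The five term homology sequence applied to $1\to W\to G\to V\to 1$ shows that the natural map $\M(G)\hookrightarrow G\wedge G\to V\wedge V$, induced by the functoriality of the exterior square under $G\twoheadrightarrow V$, has image precisely $\ker\Phi$; Blackburn and Evens \cite{Bla79} then identify its kernel with $(V\otimes W)/X$. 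The resulting short exact sequence
\[0\longrightarrow (V\otimes W)/X\longrightarrow \M(G)\longrightarrow \ker\Phi\longrightarrow 0,\]
combined with \eqref{eq:ex}, furnishes an identification $\M(G)\cong M$ under which $N$ corresponds to the kernel of the map $\M(G)\to V\wedge V$.

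To finish the proof I will check that under this identification $\M_0(G)$ corresponds to $M_0$. First, $N\subseteq \M_0(G)$: the kernel of $G\wedge G\twoheadrightarrow V\wedge V$ is generated by elements $g\wedge w$ with $g\in G$ and $w\in W$, and every such element lies in $\M_0(G)$ because $W$ is central in $G$ and so $[g,w]=1$. Second, the image of $\M_0(G)$ in $V\wedge V$ equals $\ker\Psi$: an arbitrary element $\xi=\sum \bar{x}_i\wedge \bar{y}_i$ of $V\wedge V$ lies in $\ker\Psi$ if and only if $\sum x_i\cwedge y_i=1$ in $G\cwedge G=(G\wedge G)/\M_0(G)$, if and only if the lift $\sum x_i\wedge y_i$ belongs to $\M_0(G)$. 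Combining these two facts, $\M_0(G)/N\cong \ker\Psi\cong M_0/N$, whence $\M_0(G)\cong M_0$. The principal obstacle is the invocation of Blackburn--Evens to pin down the kernel of the natural map $\M(G)\to V\wedge V$; once this is available, the remainder is a formal diagram chase.
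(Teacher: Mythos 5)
Your first isomorphism is exactly the paper's argument, and the two computations you make for the second one are correct: the kernel of $G\wedge G\to V\wedge V$ is generated by elements $g\wedge w$ with $w\in\gamma_2(G)$ central, hence lies in $\M_0(G)$ (this is the $\wedge$-analogue of Proposition \ref{p:curlquot}), and the image of $\M_0(G)$ in $V\wedge V$ is precisely $\ker\Psi$, by the same lifting argument as in Proposition \ref{p:B0class2}; likewise the five-term identification of the image of $\M(G)$ with $\ker\Phi$ is fine. The genuine gap is the step ``the resulting short exact sequence, combined with \eqref{eq:ex}, furnishes an identification $\M(G)\cong M$'', and the parallel final inference ``$\M_0(G)/N\cong\ker\Psi\cong M_0/N$, whence $\M_0(G)\cong M_0$''. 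Knowing the kernel and the quotient of an extension does not determine the middle term: you need the extensions to split, i.e.\ you need $\M(G)$ (hence $\M_0(G)$) to be elementary abelian. Your blanket assertion that ``all the groups in sight are $\mathbb{F}_p$-vector spaces'' is justified only for $V$, $W$, $V\wedge V$, $V\otimes W$ and $G\cwedge G$ (the latter being a quotient of $V\wedge V$); in $G\wedge G$ one only gets $(x\wedge y)^p\in\ker(G\wedge G\to V\wedge V)$ from the defining relations, so a priori $\M(G)$ could contain elements of order $p^2$. That $\M(G)$ has exponent $p$ is exactly the extra content of the Blackburn--Evens theorem in the form the paper cites it ($\M(G)\cong M$ with $M\le M^*$ elementary abelian); it is not a formal consequence of the weaker statement you extract from \cite{Bla79} (identification of the kernel of $\M(G)\to V\wedge V$ with $(V\otimes W)/X$).

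Once you invoke the full Blackburn--Evens identification, your argument closes and is in fact a refinement of the paper's. The paper's proof of $\M_0(G)\cong M_0$ is a pure counting argument: $\M(G)\cong M$, $\Bt_0(G)\cong\ker\Phi/\ker\Psi\cong M/M_0$ by Proposition \ref{p:B0class2}, and since $M$, $M_0$ and $\M_0(G)\le\M(G)$ are elementary abelian, equality of orders forces $\M_0(G)\cong M_0$. Your diagram chase shows more, namely that an isomorphism $\M(G)\to M$ can be chosen compatibly with the projections to $\ker\Phi\le V\wedge V$, so that it carries $\M_0(G)$ (the full preimage of $\ker\Psi$, since $N\le\M_0(G)$) onto $M_0$; this canonical correspondence is worth having, but it does not let you avoid the exponent-$p$ input that the paper's citation of \cite{Bla79} supplies.
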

\begin{proof}
By a result of Blackburn and Evens \cite{Bla79}, $M\cong \M(G)$. Proposition \ref{p:B0class2} implies that
$\Bt_0(G)\cong\ker\Phi /\ker\Psi\cong M/M_0$. Since both $M$ and $M_0$ are elementary
abelian, it follows from Theorem \ref{t:main} that $\M_0(G)\cong M_0$. This concludes the proof.
\end{proof}
%%%%%%%%%%%%%%%%%%%%%%%%%%%%%%%%%%%%%%%%%%%%%%%%%%%%%%%%%%%%%%%%%%%%%%%
\section{Split extensions and Frobenius groups}
\label{s:frobenius}

\noindent
In this section all the groups are finite.
Let $G=N\rtimes Q$ be a split extension of the group $N$ by $Q$. Then the Schur multiplier of $G$ can be described
by a result of Tahara \cite{Tah72}, see also \cite[p. 28]{Kar87}. We have that $\HH^2(G,\QZ )$ is naturally isomorphic to
$\HH^2(Q,\QZ)\oplus \bar{\HH}^2(G,\QZ )$, where $\bar{\HH}^2(G,\QZ )=\ker \res ^G_Q$. Moreover, $\bar{\HH}^2(G,\QZ )$ fits
into the following exact sequence:
\begin{multline}
\label{eq:splitseq}
0\rightarrow \HH^1(Q,\HH^1(N,\QZ ))\rightarrow \bar{\HH}^2(G,\QZ )\rightarrow
\HH^2(N,\QZ )^Q\\
\rightarrow \HH^2(Q,\HH^1(N,\QZ )).
\end{multline}

\noindent
A description in terms of the nonabelian exterior products is obtained as follows. The commutator map
$G\wedge N\to [N,G]$ is a homomorphism of groups. Denote its kernel by $\M(G,N)$. The group $\M(G,N)$
is said to be the {\it Schur multiplier of the pair} $(G,N)$. Ellis \cite{Ell98} proved that
$\M(G)\cong \M(G,N)\oplus \M(Q)$, and $\M(G,N)\cong \ker (\M(G)\to \M(Q))$. Here $\M(G,N)$ is embedded into $\M(G)$ via the
the restriction $\iota _1$ of the natural homomorphism 
$G\wedge N\to G\wedge G$, and the embedding $\iota _2:\M(Q)\hookrightarrow \M(G)$ is induced by the split surjection
$\xymatrix@=20pt{G\ar@<0.5ex>@{->>}[r] &Q\ar@<0.5ex>@{-->}[l]}$.

Our aim is to describe $\Bt_0(G)$ in the case when $G$ is a split extension of $N$ by $Q$. At first we define a subgroup
$\bar{\M}_0(G,N)$ of $G\wedge N$ by
$$\bar{\M}_0(G,N)=\langle (a\wedge m)^{-1}(b\wedge n)(n\wedge m)\mid a,b\in G, m,n\in N, [a,b]=1,{}^bnm={}^amn\rangle .$$
It is straightforward to verify that $\M_0(G,N)\le \bar{\M}_0(G,N)\le \M(G,N)$.

\begin{theorem}
\label{t:semidir}
Let $G=N\rtimes Q$. Then $\Bt_0(G)\cong \M(G,N)/\bar{\M}_0(G,N)\oplus \Bt_0(Q)$.
\end{theorem}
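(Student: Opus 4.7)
The plan is to combine Ellis's direct-sum decomposition $\M(G) \cong \iota_1 \M(G,N) \oplus \iota_2 \M(Q)$ recalled just before the statement with a matching decomposition of the subgroup $\M_0(G)$. Concretely, I would try to prove
$$\M_0(G) = \iota_1(\bar{\M}_0(G,N)) \oplus \iota_2(\M_0(Q))$$
under Ellis's isomorphism. Once this is established, the theorem follows immediately by passing to quotients, since by definition $\Bt_0(G) = \M(G)/\M_0(G)$ and $\Bt_0(Q) = \M(Q)/\M_0(Q)$.

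The easier containment $\iota_1(\bar{\M}_0(G,N)) + \iota_2(\M_0(Q)) \subseteq \M_0(G)$ splits into two pieces. The $Q$-part is clear: if $q_1,q_2 \in Q$ commute in $Q$ they also commute in $G$, hence $q_1 \wedge q_2 \in \M_0(G)$ and so $\iota_2(\M_0(Q)) \subseteq \M_0(G)$. For the relative part I would take a generator $(a \wedge m)^{-1}(b \wedge n)(n \wedge m)$ of $\bar{\M}_0(G,N)$ and, using the defining relations \eqref{eq:tens1}--\eqref{eq:tens3} together with Lemma \ref{l:ext}, rewrite it (up to an element coming from $\iota_2(\M_0(Q))$) as the single wedge $am \wedge bn \in G \wedge G$. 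The hypotheses $[a,b]=1$ and ${}^b nm = {}^a mn$ are precisely what is needed to force $[am,bn] = 1$ in $G$, so this wedge belongs to $\M_0(G)$.

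For the reverse containment $\M_0(G) \subseteq \iota_1(\bar{\M}_0(G,N)) \oplus \iota_2(\M_0(Q))$, I would take an arbitrary generator $x \wedge y$ of $\M_0(G)$ with $[x,y]=1$ and use the split structure of $G = N \rtimes Q$ to write $x = am$, $y = bn$ uniquely with $a,b \in Q$ and $m,n \in N$. Repeated application of \eqref{eq:tens1} and \eqref{eq:tens2} then expresses $am \wedge bn$ as a product of a $Q \wedge Q$-factor $a \wedge b$ and a collection of factors lying in $\iota_1(G \wedge N)$. The identity $[am,bn]=1$ unpacks in the semidirect-product structure as $[a,b]=1$ in $Q$ together with the compatibility ${}^b nm = {}^a mn$ between the $N$-parts. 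Consequently the $Q \wedge Q$-factor lies in $\iota_2 \M_0(Q)$, while the remaining $G \wedge N$-factor is by construction a generator of $\bar{\M}_0(G,N)$.

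The main obstacle is the bookkeeping in this second step: carrying out the expansion of $am \wedge bn$ via the crossed-pairing identities, projecting correctly onto Ellis's two summands, and verifying that the translation of $[am,bn]=1$ into semidirect-product data produces exactly the condition ${}^b nm = {}^a mn$ that defines $\bar{\M}_0(G,N)$, rather than some a priori weaker or stronger condition. Once this computation is handled, combining the identification $\M_0(G) = \iota_1(\bar{\M}_0(G,N)) \oplus \iota_2(\M_0(Q))$ with Ellis's decomposition of $\M(G)$ yields the required isomorphism $\Bt_0(G) \cong \M(G,N)/\bar{\M}_0(G,N) \oplus \Bt_0(Q)$.
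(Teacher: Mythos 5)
Your overall strategy is the paper's: keep Ellis's decomposition $\M(G)\cong\iota_1\M(G,N)\oplus\iota_2\M(Q)$, prove $\M_0(G)=\iota_1\bar{\M}_0(G,N)\oplus\iota_2\M_0(Q)$, and pass to quotients. The gap is in the dictionary you propose between generators of $\bar{\M}_0(G,N)$ and commuting pairs of $G$. The claim that $[a,b]=1$ together with ${}^bn\,m={}^am\,n$ forces $[am,bn]=1$ is false: when $[a,b]=1$ one has $[am,bn]=1$ if and only if ${}^{b^{-1}}m\,n={}^{a^{-1}}n\,m$, which is a different condition. Concretely, in $D_4=\langle r\rangle\rtimes\langle s\rangle$ take $a=1$, $b=s$, $m=r$, $n=r^2$: then $[a,b]=1$ and ${}^bn\,m={}^am\,n=r^{-1}$, yet $r$ and $sr^2$ do not commute. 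The same mismatch infects your reverse containment: writing a commuting pair as $x=am$, $y=bn$ ($Q$-part first) translates $[x,y]=1$ into ${}^{b^{-1}}m\,n={}^{a^{-1}}n\,m$, not into the condition defining $\bar{\M}_0(G,N)$, so the relative factor you extract is not visibly a generator of $\bar{\M}_0(G,N)$. This is exactly the point you flagged as the main thing to verify, and as stated it does not come out right.

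The fix, which is how the paper proceeds, is to attach to a generator $\omega=(a\wedge m)^{-1}(b\wedge n)(n\wedge m)$ the commuting pair $(n^{-1}a,\,m^{-1}b)$ rather than $(am,bn)$: given $[a,b]=1$, the relation $[n^{-1}a,\,m^{-1}b]=1$ is equivalent to ${}^bn\,m={}^am\,n$. Correspondingly one factors a commuting pair as $x=n_1^{-1}q_1$, $y=n_2^{-1}q_2$. Since $N\cap Q=1$, the relation $[x,y]=1$ forces $[q_1,q_2]=1$, so $q_1\wedge q_2\in\iota_2\M_0(Q)$ is central with trivial $G$-action, and expanding via \eqref{eq:tens1}--\eqref{eq:tens2} gives $x\wedge y={}^{n_1^{-1}n_2^{-1}}\bigl((q_1\wedge n_2)^{-1}(q_2\wedge n_1)(n_1\wedge n_2)\bigr)(q_1\wedge q_2)$, with $[x,y]=1$ translating exactly into ${}^{q_2}n_1\,n_2={}^{q_1}n_2\,n_1$; hence the bracketed factor is a generator of $\iota_1\bar{\M}_0(G,N)$. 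The same identity read backwards, together with the triviality of the $G$-action on $\M(G)$, yields ${}^{n^{-1}m^{-1}}\omega=(n^{-1}a\wedge m^{-1}b)(b\wedge a)\in\M_0(G)$, which is the other containment. So your outline is salvageable, but only after replacing the pairing $(am,bn)$ by $(n^{-1}a,\,m^{-1}b)$ (equivalently, re-indexing quadruples via $(a,b,m,n)\mapsto(a^{-1},b^{-1},n,m)$); with your parametrization the key step, as written, fails.
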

\begin{proof}
Let $x,y\in G$ commute. We can write $x=n_1^{-1}q_1$ and $y=n_2^{-1}q_2$ for some
$n_1,n_2\in N$ and $q_1,q_2\in Q$. From $[x,y]=1$ we obtain
${}^{n_1^{-1}}[q_1,n_2]\cdot {}^{n_1^{-1}n_2^{-1}}[q_1,q_2]\cdot [n_1^{-1},y]=1$. As $N\cap Q=1$, we conclude that
$[q_1,q_2]=1$. Therefore $q_1\wedge q_2$ as an element of $G\wedge G$ belongs to $\iota _2\M_0(Q)$, hence it is central in $G\wedge G$
and $G$ acts trivially upon it. Now we have
\begin{align*}
x\wedge y & ={}^{n_1^{-1}}(q_1\wedge n_2^{-1})\cdot {}^{n_1^{-1}n_2^{-1}}(q_1\wedge q_2)\cdot
(n_1^{-1}\wedge n_2^{-1})\cdot {}^{n_2^{-1}}(n_1^{-1}\wedge q_2)\\
&= {}^{n_1^{-1}n_2^{-1}}\left ( {}^{n_2}(q_1\wedge n_2^{-1})\cdot {}^{n_1n_2}(n_1^{-1}\wedge n_2^{-1})
\cdot {}^{[n_2,n_1]n_1}(n_1^{-1}\wedge q_2) \right )(q_1\wedge q_2)\\
&= {}^{n_1^{-1}n_2^{-1}}\left ( (q_1\wedge n_2)^{-1}(q_2\wedge n_1)(n_1\wedge n_2) \right ) (q_1\wedge q_2).
\end{align*}
From $[x,y]=[q_1,q_2]=1$ we obtain that $[n_2,q_1][q_2,n_1][n_1,n_2]=1$, which is equivalent to
${}^{q_2}n_1n_2={}^{q_1}n_2n_1$. It follows from here that
$(q_1\wedge n_2)^{-1}(q_2\wedge n_1)(n_1\wedge n_2)\in \iota _1\bar{\M}_0(G,N)$. This shows
that $\M_0(G)\le \iota _1 \bar{\M}_0(G,N)\oplus \iota _2\M_0(Q)$. Conversely, it is clear that 
$\iota _2\M_0(Q)\le \M_0(G)$. Take any generator $\omega =(a\wedge m)^{-1}(b\wedge n)(n\wedge m)$ of
$\bar{\M}_0(G,N)$. Then we have that $[a,b]=1$ and ${}^bnm={}^amn$. The above calculation shows
that ${}^{n^{-1}m^{-1}}\omega =(n^{-1}a\wedge m^{-1}b)(b\wedge a)$. By our assumptions we have
$[n^{-1}a,m^{-1}b]=1$, therefore $\omega\in \M_0(G)$. From here we can finally conclude that
$\M_0(G)=\iota _1 \bar{\M}_0(G,N)\oplus \iota _2\M_0(Q)$, and this proves the assertion.
\end{proof}

The structure of $\Bt_0(G)$ can further be refined when $G$ is a Frobenius group.
A {\it Frobenius group} \cite[p. 496]{Hup67}
is a transitive permutation group such that no non-trivial element fixes more than 
one point and some non-trivial element fixes a point.
The subgroup $Q$ of a Frobenius group $G$ fixing a point 
is called the {\it Frobenius complement}. 
By a theorem of Frobenius \cite[p. 496]{Hup67}, the set
$$N=G\setminus \bigcup _{g\in G} {}^g(Q\setminus\{ 1\})$$
is a normal subgroup in $G$ called the {\it Frobenius kernel} $N$. We have that $G=N\rtimes Q$, and $Q$ acts fixed-point-freely
upon $N$. We have that $Q\cap {}^gQ=1$ for every $g\in G\setminus Q$, and so if $\{ g_1,\ldots ,g_r\}$ is a left transversal
of $Q$ in $G$ then we have a {\it Frobenius partition}
\begin{equation}
\label{eq:part}
G=N\,\dot{\cup}\, {}^{g_1}Q\,\dot{\cup}\,\cdots\,\dot{\cup}\, {}^{g_r}Q,
\end{equation}
where the word `partition' means that the intersection of two different components is 1.

At first we describe the Schur multiplier of a Frobenius group by refining the above mentioned result of Tahara.

\begin{proposition}
\label{p:H2frob}
Let $G$ be a Frobenius group with Frobenius kernel $N$ and complement $Q$. Then
$\HH^2(G,\QZ )\cong \HH^2(N,\QZ )^Q\cong \M(G,N)$.
\end{proposition}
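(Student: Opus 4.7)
The plan is to exploit two coprimality phenomena peculiar to Frobenius groups. The key input is the classical fact that $\gcd(|N|,|Q|) = 1$ for any Frobenius group (a consequence of the fixed-point-free action of $Q$ on $N$), together with Tahara's decomposition and sequence \eqref{eq:splitseq} recorded just before the statement.

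First I would observe that $\HH^1(N,\QZ) \cong \Hom(N^{\rm ab},\QZ)$ is a finite abelian group of exponent dividing $|N|$, hence has order coprime to $|Q|$. By the standard vanishing of cohomology of a finite group with coefficients of coprime order, $\HH^i(Q,\HH^1(N,\QZ)) = 0$ for every $i \geq 1$. Feeding this into \eqref{eq:splitseq} forces both flanking terms to vanish, yielding $\bar{\HH}^2(G,\QZ) \cong \HH^2(N,\QZ)^Q$.

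Next I would argue that $\HH^2(Q,\QZ) = 0$, so that Tahara's splitting $\HH^2(G,\QZ) \cong \HH^2(Q,\QZ) \oplus \bar{\HH}^2(G,\QZ)$ upgrades the previous isomorphism to $\HH^2(G,\QZ) \cong \HH^2(N,\QZ)^Q$. Since $\HH^2(Q,\QZ) \cong \Hom(\M(Q),\QZ)$, this reduces to showing $\M(Q) = 0$. Here I would invoke the classical structure theorem for Frobenius complements: every Sylow $p$-subgroup of $Q$ is cyclic when $p$ is odd, and is cyclic or generalized quaternion when $p = 2$, and such $p$-groups have trivial Schur multiplier. Since the $p$-primary component of $\M(Q)$ embeds via restriction-transfer into the Schur multiplier of a Sylow $p$-subgroup, $\M(Q) = 0$ follows. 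This is the step I expect to be the main obstacle, as it rests on a nontrivial group-theoretic classification rather than on general homological machinery.

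For the second isomorphism $\HH^2(N,\QZ)^Q \cong \M(G,N)$, I would invoke Ellis's identity $\M(G,N) = \ker(\M(G) \to \M(Q))$ cited in the discussion preceding the statement. The vanishing $\M(Q) = 0$ established above collapses this to $\M(G,N) \cong \M(G)$, and then combining $\HH^2(G,\QZ) \cong \Hom(\M(G),\QZ)$ with finiteness of $\M(G)$ (so that it is non-canonically isomorphic to its Pontryagin dual) chains the two isomorphisms together as claimed.
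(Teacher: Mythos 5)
Your proposal is correct and follows essentially the same route as the paper: Tahara's splitting together with the coprimality $\gcd(|N|,|Q|)=1$ to kill both flanking terms of \eqref{eq:splitseq}, the cyclic/generalized-quaternion structure of Sylow subgroups of a Frobenius complement to get $\HH^2(Q,\QZ)=0$, and Ellis's result (with $\M(Q)=0$ and finiteness) for the identification with $\M(G,N)$. The only difference is cosmetic: you phrase the vanishing for $Q$ via $\M(Q)$ and restriction to Sylow subgroups, while the paper argues directly with $\HH^2(P,\QZ)$.
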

\begin{proof}
By Tahara's result we have
$\HH^2(G,\QZ )\cong \HH^2(Q,\QZ )\oplus \bar{\HH}^2(G,\QZ )$.
The Sylow $p$-subgroups of $Q$ are cyclic if $p$ is odd, and either cyclic or generalized quaternion groups if $p=2$ 
\cite[Hauptsatz V.8.7]{Hup67}.
Thus $\HH^2(P,\QZ )=0$ for every Sylow $p$-subgroup $P$ of $Q$ and every prime $p$ dividing the order of $Q$. It follows from here
that $\HH^2(Q,\QZ )=0$.
It remains to show that $\bar{\HH}^2(G,\QZ )\cong \HH^2(N,\QZ )^Q$. 
By \cite[Satz V.8.3]{Hup67} we have $\gcd (|N|,|Q|)=1$, which clearly implies $\HH^i((Q,\HH^1(N,\QZ ))=1$ for all $i\ge 1$,
hence the exact sequence \eqref{eq:splitseq} gives the isomorphism $\HH^2(G,\QZ )\cong \HH^2(N,\QZ )^Q$. The fact that
the latter is isomorphic to $\M(G,N)$ follows from the above mentioned result of Ellis.
\end{proof}

Moving on to $\Bt_0(G)$, where $G$ is a Frobenius group, we first need to describe the structure of commuting pairs in $G$.
In the Frobenius case, these are particularly well behaved, as the following result shows.

\begin{lemma}
\label{l:commfrob}
Let $G$ be a Frobenius group with the Frobenius kernel $N$ and complement $Q$. Let $x,y\in G$ commute.
Then either $x,y\in N$ or there exists $g\in G$ such that both $x$ and $y$ belong to ${}^gQ$.
\end{lemma}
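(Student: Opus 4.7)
The plan is to reduce the statement to two standard centralizer properties of a Frobenius group $G=N\rtimes Q$, namely
\begin{align*}
C_G(n)&\le N \quad\text{for every }1\ne n\in N,\\
C_G(q)&\le Q \quad\text{for every }1\ne q\in Q.
\end{align*}
Both are classical consequences of the fixed-point-free action of $Q$ on $N$ and of the relation $Q\cap{}^gQ=1$ for $g\notin Q$; they are proved in Huppert \cite[Kapitel V.8]{Hup67}. The first holds because if $x=n'q\in C_G(n)$ (with $n'\in N$, $q\in Q$), then the commuting condition rewrites as $qnq^{-1}=n'^{-1}nn'$, and the fixed-point-free action of $Q$ on $N$ forces $q=1$, whence $x\in N$. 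The second is symmetric: a non-trivial $q\in Q$ cannot be centralized by any element outside $Q$, for otherwise one produces a non-trivial intersection $Q\cap {}^gQ$.

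With these facts in hand, I would carry out the argument by a straightforward case split based on the Frobenius partition \eqref{eq:part}. If either of $x,y$ equals $1$, then since $1\in N$ and $1\in{}^{g_i}Q$ for every $i$, one can trivially place the pair inside $N$ or inside any ${}^{g_i}Q$, so assume both $x$ and $y$ are non-identity. By the partition, the non-trivial $x$ lies in exactly one of the components. If $x\in N\setminus\{1\}$, then $y\in C_G(x)\le N$ by the first centralizer property, and we are done. Otherwise $x\in{}^{g_i}Q\setminus\{1\}$ for a unique $i$; writing $x={}^{g_i}x_0$ with $1\ne x_0\in Q$, conjugating the second centralizer property by $g_i$ gives $C_G(x)={}^{g_i}C_G(x_0)\le{}^{g_i}Q$, and hence $y\in{}^{g_i}Q$ as required.

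The only potential obstacle is establishing the two centralizer properties, but in the Frobenius setting these are standard and can be cited rather than reproved. Everything else is a clean case analysis that uses nothing beyond the partition \eqref{eq:part}.
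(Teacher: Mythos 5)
Your argument is correct, and in substance it is the same as the paper's: the paper proves exactly your two centralizer facts inline rather than quoting them, excluding the case $x\in N\setminus\{1\}$, $y\notin N$ by the fixed-point-free action of $Q$ on $N$, and the case of two different conjugates of $Q$ by the relation $Q\cap{}^{g}Q=1$ for $g\notin Q$, all read off the Frobenius partition \eqref{eq:part}. Routing the proof through the standard facts $C_G(n)\le N$ for $1\ne n\in N$ and $C_G(q)\le Q$ for $1\ne q\in Q$ (cited to Huppert) is legitimate and makes the final case split trivial; what the paper's version buys instead is self-containedness. One caution: your parenthetical sketch of the first fact is too quick as written — from $qnq^{-1}=n'^{-1}nn'$ the fixed-point-freeness does not directly force $q=1$, since this identity is not of the form ${}^{q}m=m$; the standard repair is to observe that $x=n'q$ with $q\ne 1$ lies in ${}^{g}(Q\setminus\{1\})$ for some $g$ by the partition, so ${}^{g^{-1}}x\in Q\setminus\{1\}$ centralizes ${}^{g^{-1}}n\in N\setminus\{1\}$ (using normality of $N$), contradicting the fixed-point-free action. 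Since you ultimately cite these facts rather than rely on the sketch, this does not affect the correctness of your proof.
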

\begin{proof}
Let $G$ have a Frobenius partition as given by \eqref{eq:part}.
Suppose that $[x,y]=1$ for $x,y\in G\setminus\{ 1\}$. We may further suppose that at least one of these elements does not belong to $N$.
Assume first that $x\in N$ and $y\notin N$. Without loss of generality we can write
$y={}^{g_1}q$ for some $q\in Q$. We have ${}^{xg_1}q={}^{g_1}q$, therefore
${}^{{}^{g_1^{-1}}x}q=q$. This can be rewritten as
${}^q\left ( {}^{g_1^{-1}}x\right )={}^{g_1^{-1}}x$. Since $Q$ acts fixed-point-freely on $N$, we conclude that
$q=1$ or $x=1$, a contradiction.

Assume now that $x$ and $y$ belong to different conjugates of $Q$. Without loss of generality we may assume that
$x\in Q$ and $y\in {}^{g_1}Q$ where $g_1\notin Q$. We can write $y={}^{g_1}q$, where $q\in Q$ and
$g_1=f_1q_1$ with $f_1\in F\setminus\{ 1\}$ and $q_1\in Q$. Denote $\tilde{q}={}^{q_1}q$. From ${}^xy=y$ we conclude that
${}^{f_1^{-1}xf_1}\tilde{q}=\tilde{q}\in Q\cap {}^{f_1^{-1}xf_1}Q$. As $\tilde{q}\neq 0$, we obtain that
$f_1^{-1}xf_1\in Q$, hence $x\in {}^{f_1}Q$. But $Q\cap {}^{f_1}Q=1$, and this is contrary to the assumption that $x\neq 1$.
This concludes the proof.
\end{proof}

\begin{corollary}
\label{c:frob1}
Let $G$ be a Frobenius group with the Frobenius kernel $N$. Then
$$\Bt_0(G)\cong \frac{\M(G,N)}{\im (\M_0(N)\to \M(G,N))}.$$
\end{corollary}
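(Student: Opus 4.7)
The plan is to invoke Theorem \ref{t:semidir} for the split extension $G = N \rtimes Q$, yielding $\Bt_0(G) \cong \M(G,N)/\bar{\M}_0(G,N) \oplus \Bt_0(Q)$. The second summand vanishes: as already observed in the proof of Proposition \ref{p:H2frob}, the Sylow subgroups of a Frobenius complement $Q$ are cyclic or generalized quaternion, so $\M(Q) = 0$ and hence $\Bt_0(Q) = 0$. It therefore suffices to prove the identification $\bar{\M}_0(G,N) = \im(\M_0(N) \to \M(G,N))$.

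One inclusion is immediate: for commuting $x, y \in N$, the generator of $\bar{\M}_0(G,N)$ obtained by taking $a = b = 1$, $m = x$, $n = y$ reduces to $y \wedge x$, giving $\im(\M_0(N) \to \M(G,N)) \subseteq \bar{\M}_0(G,N)$. For the reverse inclusion, I would analyse a typical generator $\omega = (a \wedge m)^{-1}(b \wedge n)(n \wedge m)$, where $[a,b] = 1$ and ${}^bnm = {}^amn$, by invoking Lemma \ref{l:commfrob} to split into two cases: either (i) $a, b \in N$, or (ii) $a, b$ lie together in some conjugate ${}^gQ$.

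In case (i), $\omega$ lifts to an element $\omega' \in N \wedge N$, and the identity ${}^{n^{-1}m^{-1}}\omega' = (n^{-1}a \wedge m^{-1}b)(b \wedge a)$ already derived in the proof of Theorem \ref{t:semidir} now holds inside $N \wedge N$. Both factors are of the form $x \wedge y$ with commuting $x, y \in N$ (the first by the explicit verification there, the second by hypothesis), so they lie in $\M_0(N)$; the $G$-invariance of $\M_0(N)$ then forces $\omega' \in \M_0(N)$. In case (ii), conjugation by $g^{-1}$ (which preserves both subgroups in question) reduces matters to $a, b \in Q$. Then $b \wedge a$ lies in the image of $\M(Q) = 0$ in $G \wedge G$, hence equals $1$, so the same identity simplifies to ${}^{n^{-1}m^{-1}}\omega = n^{-1}a \wedge m^{-1}b$ in $G \wedge G$. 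A second application of Lemma \ref{l:commfrob} to the commuting pair $(n^{-1}a, m^{-1}b)$ either forces $a = b = 1$ (via $N \cap Q = 1$) and reduces $\omega$ to $n \wedge m$ with $[n, m] = 1$, or places $n^{-1}a \wedge m^{-1}b$ in the image of $\M({}^hQ) = 0$ in $G \wedge G$. The Ellis embedding $\M(G,N) \hookrightarrow \M(G)$ then upgrades this latter vanishing back to $\M(G,N)$.

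The hard part is case (ii): it is here that one genuinely uses both the Frobenius partition, to control the location of commuting pairs, and the triviality of the Schur multiplier of every Frobenius complement, to kill the $b \wedge a$ term and the residual $n^{-1}a \wedge m^{-1}b$ term. The remainder is bookkeeping with the identities already established in the proof of Theorem \ref{t:semidir}.
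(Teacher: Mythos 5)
Your proposal is correct, but it follows a different route than the paper. The paper never revisits the generators of $\bar{\M}_0(G,N)$: it computes $\M_0(G)$ directly inside $\M(G)$ by a single application of Lemma \ref{l:commfrob} to an arbitrary commuting pair $x,y\in G$ --- either $x,y\in N$, or $x={}^gq_1$, $y={}^gq_2$ with $[q_1,q_2]=1$, in which case $x\wedge y={}^g(q_1\wedge q_2)=q_1\wedge q_2$ because $G$ acts trivially on $\M(G)$ --- so that $\M_0(G)=\iota_1\,\im\bigl(\M_0(N)\to\M(G,N)\bigr)\oplus\iota_2\M_0(Q)$, and then kills the second summand via $\HH^2(Q,\QZ)=0$ from Proposition \ref{p:H2frob}; since $\M(Q)=0$ also gives $\M(G)\cong\M(G,N)$, the corollary drops out at once. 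You instead take Theorem \ref{t:semidir} as a black box and prove the additional identification $\bar{\M}_0(G,N)=\im\bigl(\M_0(N)\to\M(G,N)\bigr)$ in the Frobenius case, using Lemma \ref{l:commfrob} twice (first on $(a,b)$, then on $(n^{-1}a,m^{-1}b)$), the identity ${}^{n^{-1}m^{-1}}\omega=(n^{-1}a\wedge m^{-1}b)(b\wedge a)$ from the proof of Theorem \ref{t:semidir} run once in $N\wedge N$ and once in $G\wedge G$, the vanishing of $\M(Q)$, and the injectivity of the embedding $\iota_1:\M(G,N)\to\M(G)$ to convert the vanishing of $\iota_1\omega$ into $\omega=1$. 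All of these steps check out: $[n^{-1}a,m^{-1}b]=1$ indeed follows from $[a,b]=1$ and ${}^bnm={}^amn$ (apply the commutator map to the displayed identity), the case-(i) computation is legitimate in $N\wedge N$ since $a\wedge b$ is central there and $N$ acts trivially on $\M(N)$, and the conjugation reduction in case (ii) is harmless because conjugation preserves the subgroups involved (in fact acts trivially on $\M(G,N)$). The trade-off: the paper's argument is shorter and avoids the generator analysis entirely, while yours yields a by-product not made explicit in the paper, namely that for Frobenius groups the a priori larger group $\bar{\M}_0(G,N)$ of Theorem \ref{t:semidir} collapses exactly to the image of $\M_0(N)$.
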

\begin{proof}
Denote $N_0=\im (\M_0(N)\to \M(G,N))$.
Let $x,y\in G$ and suppose that $[x,y]=1$.
By Lemma  \ref{l:commfrob} we either have that $x,y\in N$ or there exists $g\in G$ such that $x={}^gq_1$ and $y={}^gq_2$
for some $q_1,q_2\in Q$. We clearly have that $[q_1,q_2]=1$, hence $x\wedge y={}^gq_1\wedge{}^gq_2={}^g(q_1\wedge q_2)=
q_1\wedge q_2$. This shows that
$\M_0(G)=\langle x\wedge y\mid [x,y]=1, \hbox{either } (x,y)\in N\times N \hbox{ or } (x,y)\in Q\times Q\rangle$.
In view of the above notations we can thus write
$\M_0(G)=\iota _1 N_0\oplus \iota _2\M_0(Q)$. As $\HH^2(Q,\QZ )=0$, we have that $\M_0(Q)=0$, and hence the result.
\end{proof}

\begin{corollary}
\label{c:frob2}
Let $G$ be a Frobenius group with the Frobenius kernel $N$. Then
$$\B_0(G)=\bigcap _{A\in\mathcal{C}} \ker \res ^G_A,$$
where $\mathcal{C}$ is the family of all bicyclic subgroups of $N$.
\end{corollary}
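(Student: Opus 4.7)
The plan is to prove the non-trivial inclusion $\bigcap_{A \in \mathcal{C}} \ker\res^G_A \subseteq \B_0(G)$; the reverse is immediate from definition \eqref{eq:b0def}, as each $A \in \mathcal{C}$ is in particular a bicyclic abelian subgroup of $G$. The strategy is a two-step reduction: first reduce the intersection defining $\B_0(G)$ to bicyclic abelian subgroups of $G$, then invoke the Frobenius structure to see that only such subgroups lying in $N$ actually matter.

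For the first step, I would fix $\gamma \in \HH^2(G, \QZ)$ and an arbitrary abelian subgroup $B$ of $G$. Using the natural isomorphism $\HH^2(B, \QZ) \cong \Hom(\M(B), \QZ)$ from the proof of Theorem \ref{t:main}, together with the fact that $\M(B) = B \wedge B$ is generated by symbols $b_1 \wedge b_2$, the class $\res^G_B \gamma$ vanishes iff $\Theta(\gamma)(b_1 \wedge b_2) = 0$ for all $b_1, b_2 \in B$, iff $\res^G_{\langle b_1, b_2\rangle} \gamma = 0$ for every such pair. Therefore $\gamma \in \B_0(G)$ iff $\res^G_C \gamma = 0$ for every bicyclic abelian subgroup $C$ of $G$; this is essentially the implication used at the very end of the proof of Theorem \ref{t:main}.

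For the second step I would invoke Lemma \ref{l:commfrob}: any bicyclic abelian subgroup $C = \langle x, y\rangle$ of $G$ lies either in $N$ (so $C \in \mathcal{C}$ and $\res^G_C \gamma = 0$ by hypothesis) or in some conjugate ${}^gQ$ of $Q$. In the latter case, conjugation by $g$ is an inner automorphism of $G$ and therefore acts trivially on $\HH^2(G, \QZ)$, so $\res^G_C \gamma = 0$ iff $\res^G_{{}^{g^{-1}}C} \gamma = 0$; since ${}^{g^{-1}}C \le Q$, this restriction factors through $\res^G_Q$, and $\HH^2(Q, \QZ) = 0$ by the proof of Proposition \ref{p:H2frob}, so $\res^G_C \gamma = 0$ automatically. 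The main obstacle is only conceptual---one must recognise that $\B_0(G)$ is already determined by its restrictions to bicyclic subgroups---after which the Frobenius-specific ingredients (Lemma \ref{l:commfrob} plus the vanishing of $\HH^2(Q,\QZ)$) snap the proof into place.
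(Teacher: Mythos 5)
Your proposal is correct and follows essentially the same route as the paper: reduce the definition of $\B_0(G)$ to restrictions on bicyclic subgroups, then dispose of any bicyclic subgroup not contained in $N$ via Lemma \ref{l:commfrob} together with the vanishing of $\HH^2(Q,\QZ)$ from Proposition \ref{p:H2frob}. The only difference is that the paper simply cites Bogomolov's result that $\B_0(G)=\bigcap_{A\in\mathcal{B}}\ker\res^G_A$ with $\mathcal{B}$ the family of all bicyclic subgroups of $G$, whereas you re-derive this reduction from the homological dictionary in the proof of Theorem \ref{t:main}, which is harmless and makes the argument self-contained.
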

\begin{proof}
Denote $B_0=\bigcap _{A\in\mathcal{C}} \ker \res ^G_A$. By a result of Bogomolov \cite{Bog88}
we have that $\B_0(G)=\bigcap _{A\in\mathcal{B}} \ker \res ^G_A$, where $\mathcal{B}$ is the collection of all
bicyclic subgroups of $G$, hence $\B_0(G)\le B_0$. Now let $\gamma\in B_0$. 
Fix an arbitrary
$B=\langle x,y\rangle\in\mathcal{B}$. If $x,y\in N$, then $B\in\mathcal{C}$, and thus $\res ^G_B\gamma =0$.
Otherwise, Lemma \ref{l:commfrob} implies that there exists $g\in G$ such that $x={}^gq_1$ and $y={}^gq_2$ for some
$q_1,q_2\in Q$. Clearly we have $[q_1,q_2]=1$. As $\HH^2({}^gQ,\QZ )=0$, we have 
$\HH^2(G,\QZ )=\ker\res ^G_{{}^gQ}\le\ker\res^G_B$, hence we again have $\res^G_B\gamma =0$. We conclude that $\gamma\in \B_0(G)$.
\end{proof}

\begin{corollary}
\label{c:abel}
Let $G$ be a Frobenius group with abelian Frobenius kernel. Then $\B_0(G)=0$.
\end{corollary}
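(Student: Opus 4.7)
My plan is to deduce $\B_0(G)=0$ directly from Proposition~\ref{p:H2frob} together with the definition of $\B_0$.

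First, recall that Proposition~\ref{p:H2frob} gives a natural isomorphism $\HH^2(G,\QZ)\cong \HH^2(N,\QZ)^Q$ induced by the restriction map $\res^G_N$; in particular, $\res^G_N\colon \HH^2(G,\QZ)\to \HH^2(N,\QZ)$ is injective. On the other hand, by the defining formula \eqref{eq:b0def} we have
$$\B_0(G)=\bigcap_{\substack{A\le G\\ A\text{ abelian}}} \ker\res^G_A.$$
Since $N$ itself is an abelian subgroup of $G$ by hypothesis, $N$ appears in this intersection, so $\B_0(G)\subseteq \ker\res^G_N$. Combining the two observations yields $\B_0(G)=0$.

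As an alternative route, one can start from Corollary~\ref{c:frob2}, which reduces $\B_0(G)$ to the intersection of kernels $\ker\res^G_A$ as $A$ ranges over the bicyclic subgroups of $N$. When $N$ is abelian, every such $A$ is an abelian subgroup of $N$, and the restriction $\res^G_A$ factors as $\res^N_A\circ\res^G_N$; the injectivity of $\res^G_N$ from Proposition~\ref{p:H2frob} together with the fact that $\B_0(N)=0$ for abelian $N$ (which follows from Theorem~\ref{t:main}, since $\M_0(N)=\M(N)$ when every pair of elements of $N$ commutes) again gives the conclusion.

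There is no real obstacle here: the essential content has already been extracted in Proposition~\ref{p:H2frob} and in the analysis of commuting pairs underlying Corollary~\ref{c:frob2}. The only point to verify is that the isomorphism in Proposition~\ref{p:H2frob} is genuinely realized by $\res^G_N$, which is clear from its derivation via the five term sequence \eqref{eq:splitseq}.
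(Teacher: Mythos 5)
Your argument is correct, and it differs from the paper's proof in a useful way at both steps. For the inclusion $\B_0(G)\subseteq\ker\res^G_N$ you simply observe that the abelian kernel $N$ is itself one of the subgroups occurring in \eqref{eq:b0def}; the paper instead invokes Corollary \ref{c:frob2} to get the equality $\B_0(G)=\ker\res^G_N$, which is more than is needed, so your route is a genuine (if small) simplification. For the injectivity of $\res^G_N$ the two proofs diverge more substantially: the paper argues directly by transfer, noting that $\cor^G_N\res^G_N$ is multiplication by $|Q|$ on each $p$-primary part with $p\mid |N|$ and using $\gcd(|N|,|Q|)=1$, whereas you read injectivity off Proposition \ref{p:H2frob}. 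The latter works, but only because the map $\bar{\HH}^2(G,\QZ)\to\HH^2(N,\QZ)^Q$ in Tahara's sequence \eqref{eq:splitseq} is indeed the restriction map; the paper never names that map, so this identification is an external input from Tahara's theorem (or Karpilovsky), and the paper's corestriction argument is precisely what makes its proof independent of that identification --- worth flagging if you write your version up. Your alternative route through Corollary \ref{c:frob2} also needs, beyond $\B_0(N)=0$, the fact that classes in $\HH^2(N,\QZ)$ for abelian $N$ are detected on \emph{bicyclic} subgroups, i.e.\ Bogomolov's reduction applied to $N$; that is harmless since the same citation already underlies Corollary \ref{c:frob2}, but state it explicitly rather than folding it into ``$\B_0(N)=0$''.
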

\begin{proof}
Let $N$ be the Frobenius kernel of $G$ and $Q$ a complement of $N$ in $G$. As $N$ is abelian,
application of Corollary \ref{c:frob2} gives $\B_0(G)=\ker\res ^G_N$. Thus it suffices to show that the map
$\res ^G_N$ is injective. Let $\cor ^G_N:\HH^2(N,\QZ )\to \HH^2(G,\QZ )$ be the cohomological corestriction
map. 
Let $p$ be a prime dividing $|N|$, and denote the restriction of the map $\res ^G_N$ to the $p$-part
$\HH^2(G,\QZ )_p$ of $\HH^2(G,\QZ )$ by $\res ^G_N(p)$. Similarly, let
$\cor ^G_N(p)$ be the restriction of $\cor ^G_N$ to $\HH^2(N,\QZ )_p$.
Then $\cor ^G_N(p)\res ^G_N(p):\HH^2(G,\QZ )_p\to \HH^2(G,\QZ )_p$ is multiplication by $n=|G:N|=|Q|$.
As $p$ is coprime to $n$, it follows that $\res ^G_N(p)$ is injective for every $p$ dividing $|N|$. Therefore
$\res ^G_N$ is injective, as required.
\end{proof}
%%%%%%%%%%%%%%%%%%%%%%%%%%%%%%%%%%%%%%%%%%%%%%%%%%%%%%%%%%%%%%%%%%%%%%%
\section{The functor $\Bt_0$ in K-theory}
\label{s:ktheory}

\noindent
In this section, the role of the functor $\Bt_0$ within K-theory is outlined.
We first briefly recall some of the basic notions of K-theory. For unexplained notations and further account we refer to Milnor's book \cite{Mil71}.
Throughout this section let $\Lambda$ be a ring with 1. The group $\GL (\Lambda )$ is the direct limit of the chain
$\GL(1,\Lambda )\subset\GL (2,\Lambda )\subset\cdots$, where $\GL (n,\Lambda )$ is embedded in $\GL (n+1,\Lambda )$ via
$A\mapsto\begin{bmatrix} A & 0\\ 0 & 1\end{bmatrix}$. Denote by $\E (\Lambda )$ the subgroup of $\GL (\Lambda )$ generated
by all elementary matrices, and let $\St (\Lambda )$ be the Steinberg group. Then the $\K_1$ and $\K_2$ functors are given by 
$\K_1\Lambda =\GL (\Lambda )/\E (\Lambda )$ and
$\K_2\Lambda =\ker (\Phi :\St (\Lambda)\twoheadrightarrow \E (\Lambda ))$, respectively. 
It is known that $\K_2\Lambda$ is precisely the center of $\St (\Lambda )$, $\K_2\Lambda\cong\HH _2(\E (\Lambda ),\mathbb{Z})$, and
the sequence
$$1\longrightarrow \K_2\Lambda\longrightarrow\St (\Lambda )\longrightarrow \GL (\Lambda )\longrightarrow \K_1\Lambda\longrightarrow 1$$
is exact.

The fact that $\K_2\Lambda$ can be identified with $\HH _2(\E (\Lambda ),\mathbb{Z})$ suggests the following definition.
For a ring $\Lambda$ set $\Bt _0\Lambda =\Bt _0 (\E (\Lambda ))$. This clearly defines a covariant functor
 from $\mathbf{Ring}$ to $\mathbf{Ab}$. The group $\Bt_0\Lambda$ fits into the exact sequence
 $$1\longrightarrow \Bt_0\Lambda\longrightarrow \E (\Lambda )\cwedge \E (\Lambda )\longrightarrow \E (\Lambda )\longrightarrow 1.$$
 Thus $\Bt _0\Lambda$ can be considered a measure of the extent to which
relations among commutators in $\GL (\Lambda )$ fail to be consequences of `universal'
relations of $\E (\Lambda )\cwedge \E (\Lambda )$. Another description of $\Bt_0\Lambda$ can be obtained via
the Steinberg group. Denote $\M _0\Lambda =\langle \K (\St (\Lambda ))\cap\K _2\Lambda\rangle$. Then we have the following result.

\begin{theorem}
\label{t:stein}
Let $\Lambda$ be a ring. Then $\E (\Lambda )\cwedge \E (\Lambda )$ is naturally isomorphic to
$\St (\Lambda )/\M _0\Lambda$, and $\Bt _0\Lambda\cong \K_2\Lambda /\M_0\Lambda$.
\end{theorem}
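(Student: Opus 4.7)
The plan is to reduce both statements to the well-known fact that, because $\E(\Lambda)$ is perfect, the Steinberg group $\St(\Lambda)$ coincides with the nonabelian exterior square $\E(\Lambda)\wedge\E(\Lambda)$. First I would note that $\E(\Lambda)$ is perfect (each elementary matrix is a commutator of elementary matrices). By Miller's identification $\M(G)\cong\HH_2(G,\Z)=\ker(G\wedge G\to\gamma_2(G))$ recalled in Section \ref{s:prelim}, the exterior square $\E(\Lambda)\wedge\E(\Lambda)$ is therefore a central extension of $\gamma_2(\E(\Lambda))=\E(\Lambda)$ with kernel $\HH_2(\E(\Lambda),\Z)\cong \K_2\Lambda$. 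Since $\E(\Lambda)$ is perfect, this is the universal central extension and so by the uniqueness of universal central extensions is naturally isomorphic to $\St(\Lambda)$. Under this isomorphism, $\M(\E(\Lambda))$ corresponds to $\K_2\Lambda$, and for $x,y\in\E(\Lambda)$ the generator $x\wedge y$ corresponds to the commutator $[\tilde{x},\tilde{y}]\in\St(\Lambda)$ for any lifts $\tilde{x},\tilde{y}$ of $x,y$ under $\Phi:\St(\Lambda)\twoheadrightarrow\E(\Lambda)$.

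Next, I would transport the definition of $\M_0$ across this isomorphism to identify it with $\M_0\Lambda$. By Lemma \ref{l:A0}, $\M_0(\E(\Lambda))$ is generated by the elements $x\wedge y$ with $x,y\in\E(\Lambda)$ and $[x,y]=1$. Each such generator corresponds to the commutator $[\tilde{x},\tilde{y}]\in\St(\Lambda)$, which, because $[x,y]=\Phi([\tilde{x},\tilde{y}])=1$, lies in $\K_2\Lambda$, hence in $\K(\St(\Lambda))\cap\K_2\Lambda$. Conversely, a typical generator $[\tilde{a},\tilde{b}]$ of $\M_0\Lambda$ (with $\tilde{a},\tilde{b}\in\St(\Lambda)$ and $[\tilde{a},\tilde{b}]\in\K_2\Lambda$) has image in $\E(\Lambda)$ equal to $[\Phi(\tilde{a}),\Phi(\tilde{b})]=1$, so $\Phi(\tilde{a})\wedge\Phi(\tilde{b})$ is a generator of $\M_0(\E(\Lambda))$ mapping to it. Consequently the natural isomorphism $\E(\Lambda)\wedge\E(\Lambda)\cong\St(\Lambda)$ restricts to an isomorphism $\M_0(\E(\Lambda))\cong\M_0\Lambda$.

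Passing to quotients yields $\E(\Lambda)\cwedge\E(\Lambda)=(\E(\Lambda)\wedge\E(\Lambda))/\M_0(\E(\Lambda))\cong\St(\Lambda)/\M_0\Lambda$, which is the first assertion, and further restricting to the subgroups corresponding to the kernel of the commutator map gives
\[
\Bt_0\Lambda=\Bt_0(\E(\Lambda))=\M(\E(\Lambda))/\M_0(\E(\Lambda))\cong\K_2\Lambda/\M_0\Lambda,
\]
which is the second. The only real content is the identification $\E(\Lambda)\wedge\E(\Lambda)\cong\St(\Lambda)$ via the universal central extension property for the perfect group $\E(\Lambda)$; once that is invoked, the rest is a direct transport of the definitions, so I do not expect a serious obstacle.
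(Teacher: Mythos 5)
Your proposal is correct and follows essentially the same route as the paper: identify $\E(\Lambda)\wedge\E(\Lambda)$ with $\St(\Lambda)$ via the universal central extension property of the perfect group $\E(\Lambda)$ (with $\M(\E(\Lambda))$ corresponding to $\K_2\Lambda$ and $x\wedge y$ to commutators of lifts), then check that $\M_0(\E(\Lambda))$ is carried onto $\M_0\Lambda$ and pass to quotients. The paper's proof is just a more compact version of the same transport-of-structure argument.
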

\begin{proof}
The group $\St (\Lambda )$ is the universal central extension of $\E (\Lambda )$ \cite[Theorem 5.10]{Mil71}.
Since $\St (\Lambda )$ is perfect, it follows from \cite{Mil52} that $\St (\Lambda )\cong \E (\Lambda )\wedge \E (\Lambda )$. The isomorphism
$\psi : \E (\Lambda )\wedge \E (\Lambda )\to \St (\Lambda )$ can be chosen so that we have the following commutative diagram with exact rows:
$$
\xymatrix{
1\ar[r] & \M (\E (\Lambda )) \ar[r]\ar[d]_{\psi |_{ \M (\E (\Lambda ))}}^\cong & \E (\Lambda )\wedge \E (\Lambda ) \ar[r]^\kappa\ar[d]_{\psi}^\cong & 
\E (\Lambda )\ar[r]\ar@{=}[d] & 1\\
1\ar[r] & \K _2\Lambda \ar[r] & \St (\Lambda )\ar[r]^\Phi & \E (\Lambda )\ar[r] & 1
}
$$
From here we get that
$$\E (\Lambda )\cwedge \E (\Lambda )=(\E (\Lambda )\wedge \E (\Lambda ))/\M _0(\E (\Lambda ))\cong
\St (\Lambda )/\psi (\M _0(\E (\Lambda ))).$$
As $\psi (\M _0(\E (\Lambda )))=\langle [x,y]\mid x,y\in \St (\Lambda ),[\Phi (x),\Phi (y)]=1\rangle=\langle [x,y]\mid x,y\in \St (\Lambda ),
[x,y]\in\K _2\Lambda \rangle =\M_0\Lambda$, we get the result.
\end{proof}

Theorem \ref{t:stein} thus shows that $\Bt _0\Lambda$
is the obstruction to $\K_2\Lambda$ being generated by commutators.
Alternatively, let $A,B\in\E (\Lambda )$ commute, and choose $a,b\in\St (\Lambda )$ such that
$A=\Phi (a)$ and $B=\Phi (b)$. Define $A\star B=[a,b]\in \K_2\Lambda$ to be the {\it Milnor element}
induced by $A$ and $B$, cf. \cite[p. 63]{Mil71}. The following is then straightforward.

\begin{proposition}
Let $\Lambda$ be a ring. Then $\M_0\Lambda = \langle A\star B\mid A,B\in \E (\Lambda ), [A,B]=1\rangle$. Thus
$\Bt_0\Lambda =0$ if and only if $\K_2\Lambda$ is generated by Milnor's elements.
\end{proposition}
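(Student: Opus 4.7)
The plan is to unwind both sides directly from the definitions, using $\K_2\Lambda = \ker\Phi$ and the fact that $\K_2\Lambda$ is central in $\St(\Lambda)$ (so that Milnor elements are well defined).

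First I would verify that, for commuting $A,B\in\E(\Lambda)$ with lifts $a,b\in\St(\Lambda)$, the commutator $[a,b]$ does lie in $\K_2\Lambda$: indeed $\Phi([a,b])=[\Phi(a),\Phi(b)]=[A,B]=1$, so $[a,b]\in\ker\Phi=\K_2\Lambda$. (This also gives the well-definedness of $A\star B$, since altering $a$ by a central element of $\St(\Lambda)$ does not change $[a,b]$.) In particular $A\star B\in\K(\St(\Lambda))\cap\K_2\Lambda$, so $A\star B\in\M_0\Lambda$. This gives the inclusion
\[
\langle A\star B\mid A,B\in\E(\Lambda),\ [A,B]=1\rangle\ \subseteq\ \M_0\Lambda.
\]

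For the reverse inclusion, take a generator of $\M_0\Lambda$, i.e.\ a commutator $[a,b]$ with $a,b\in\St(\Lambda)$ such that $[a,b]\in\K_2\Lambda$. Put $A=\Phi(a)$ and $B=\Phi(b)$. Since $\K_2\Lambda=\ker\Phi$, the condition $[a,b]\in\K_2\Lambda$ becomes $[A,B]=\Phi([a,b])=1$, so $A$ and $B$ commute in $\E(\Lambda)$, and by the very definition $[a,b]=A\star B$. Hence every generator of $\M_0\Lambda$ is a Milnor element, which establishes the opposite inclusion and thus the claimed equality.

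Finally, the second statement is an immediate consequence: by Theorem \ref{t:stein} one has $\Bt_0\Lambda\cong \K_2\Lambda/\M_0\Lambda$, so $\Bt_0\Lambda=0$ is equivalent to $\K_2\Lambda=\M_0\Lambda$, and by the equality just proved this is the same as $\K_2\Lambda$ being generated by the Milnor elements $A\star B$. The argument is essentially tautological once the definitions are unpacked; the only subtlety worth flagging is the independence of $A\star B$ from the choice of lifts, which is the one place where centrality of $\K_2\Lambda$ in $\St(\Lambda)$ is used.
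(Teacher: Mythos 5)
Your proof is correct and is exactly the definition-unwinding argument the paper has in mind when it states the proposition as ``straightforward'' after Theorem \ref{t:stein}: both directions follow from $\ker\Phi=\K_2\Lambda$ being central in $\St(\Lambda)$, which makes a commutator $[a,b]\in\K(\St(\Lambda))\cap\K_2\Lambda$ the same thing as a Milnor element $\Phi(a)\star\Phi(b)$, and the second claim then follows from $\Bt_0\Lambda\cong\K_2\Lambda/\M_0\Lambda$. Your remark on well-definedness of $A\star B$ via centrality of $\K_2\Lambda$ is the right point to flag and matches the paper's setup.
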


The question as to whether $\K_2\Lambda$ is generated by Milnor's elements for every ring $\Lambda$ was posed by Bass, cf. Problem 3
of \cite{Den73}. As the group $\E (\Lambda )$ is perfect, 
the problem is equivalent to the question whether or not every CP extension of $\E (\Lambda )$ is trivial, cf. Proposition \ref{p:CP2}.

Now let $\{ x_{ij}^\lambda\mid i,j\in\mathbb{N},\lambda\in\Lambda\}$
be the standard generating set of $\St (\Lambda )$. For $u\in\Lambda ^\times$ define $w_{ij}(u)=x_{ij}^ux_{ji}^{-u^{-1}}x_{ij}^u$
and $h_{ij}(u)=w_{ij}(u)w_{ij}(-1)$. For $u,v\in\Lambda ^\times$ with $uv=vu$ let $\{u ,v\}=[h_{ij}(u),h_{ij}(v)]$ be the
{\it Steinberg symbol}. It is known that $\K_2\mathbb{Z}$ is generated by the Steinberg symbol $\{ -1,-1\}$, cf. \cite[Corollary 10.2]{Mil71}. 
This, together
with Theorem \ref{t:stein}, implies that $\Bt_0\mathbb{Z}=0$. Similarly, we have the following.

\begin{corollary}
\label{c:semilocal}
Let $\Lambda$ be a commutative semilocal ring. Then $\Bt_0\Lambda =0$.
\end{corollary}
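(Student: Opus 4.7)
The plan is to combine the preceding proposition with a classical theorem on the generation of $\K_2$ for commutative semilocal rings. By the proposition just before the corollary, the vanishing of $\Bt_0\Lambda$ is equivalent to $\K_2\Lambda$ being generated by Milnor elements $A\star B$, where $A,B\in\E(\Lambda)$ are commuting matrices. Equivalently, by Theorem \ref{t:stein}, it suffices to prove $\M_0\Lambda=\K_2\Lambda$. Thus my task reduces to producing enough Milnor elements to exhaust $\K_2\Lambda$.

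To do so I would invoke the well-known theorem of Stein (extending Matsumoto's theorem from fields, cf.\ also van der Kallen and Dennis--Stein) asserting that for a commutative semilocal ring $\Lambda$ the group $\K_2\Lambda$ is generated by Steinberg symbols $\{u,v\}=[h_{ij}(u),h_{ij}(v)]$ with $u,v\in\Lambda^\times$. This external input is the substantive step and is the main obstacle; invoking it (rather than reproving it) is really the whole content of the argument.

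The remaining point is to recognize each Steinberg symbol as a Milnor element. For $u\in\Lambda^\times$ the image $D_u=\Phi(h_{ij}(u))$ is the diagonal matrix in $\E(\Lambda)$ with entries $u$ and $u^{-1}$ in positions $i$ and $j$ (and $1$ elsewhere). Since $\Lambda$ is commutative, any two such diagonal matrices commute, so $[D_u,D_v]=1$, and by definition of the Milnor element $\{u,v\}=[h_{ij}(u),h_{ij}(v)]=D_u\star D_v$ lies in $\M_0\Lambda$. Hence $\M_0\Lambda$ contains a generating set of $\K_2\Lambda$, which gives $\M_0\Lambda=\K_2\Lambda$, and Theorem \ref{t:stein} then yields $\Bt_0\Lambda\cong\K_2\Lambda/\M_0\Lambda=0$. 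The computation with diagonal matrices is entirely routine once the definitions are unwound, so the weight of the proof rests solely on the cited generation theorem.
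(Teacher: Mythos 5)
Your proof is correct and follows essentially the same route as the paper: cite the Stein--Dennis generation of $\K_2\Lambda$ by Steinberg symbols for commutative semilocal rings, observe that each symbol $\{u,v\}=[h_{ij}(u),h_{ij}(v)]$ is a commutator of lifts of commuting diagonal matrices and hence lies in $\M_0\Lambda$, and conclude via Theorem \ref{t:stein}. The only difference is that you spell out the (routine) verification that Steinberg symbols are Milnor elements, which the paper leaves implicit.
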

\begin{proof}
By a result of Stein and Dennis \cite[Theorem 2.7]{Ste73}, 
$\K_2\Lambda$ is generated by the Steinberg symbols $\{ u,v\}$, where $u,v\in\Lambda ^\times$,
hence the result follows from Theorem \ref{t:stein}.
\end{proof}

Our next goal is to compute $\Bt _0(\GL (\Lambda ))$ for an arbitrary ring $\Lambda$. Dennis \cite[Corollary 8]{Den76} showed
that $\HH _2(\GL (\Lambda ),\mathbb{Z})\cong \K_2\Lambda\oplus \HH_2(\GL (\Lambda )^{\rm ab},\mathbb{Z})$. 
The drawback is that the splitting is non-canonical. Instead
of that, we use a variant of the functor $\HH_2$ defined by Dennis. Given a group $G$, let $G\dwedge G$ be the group generated by symbols
$x\dwedge y$, where $x,y\in G$ are subject to the relations 
analogous to \eqref{eq:tens1} and \eqref{eq:tens2} in the definition of the nonabelian
exterior square $G\wedge G$ of the group $G$, and the relation \eqref{eq:tens3} is replaced by
\begin{equation}
\label{eq:tens3a}
(x\dwedge y)(y\dwedge x)=1
\end{equation}
for all $x,y\in G$. We clearly have the commutator homomorphism $\hat{\kappa} :G\dwedge G\to\gamma _2(G)$ given by 
$x\dwedge y\mapsto [x,y]$. Denote $\tilde{\HH}_2(G)=\ker\hat{\kappa}$. The latter group has a topological interpretation. Namely,
it follows from \cite{Bro87} that $\tilde{\HH}_2(G)\cong \pi _4(\Sigma ^2\K (G,1))$, where $\K (G,1)$ is the classifying space of $G$. 
Let $\tilde{\M}_0(G)=\langle x\dwedge y\mid x,y\in G, [x,y]=1\rangle$. Then the defining relations of $G\dwedge G$ imply
that $(G\dwedge G)/\tilde{\M}_0(G)\cong G\cwedge G$ and $\tilde{\HH}_2(G)/\tilde{\M}_0(G)\cong \Bt_0(G)$. If $G$ is perfect, then 
$(G\dwedge G,\hat{\kappa},\tilde{\HH}_2(G))$ is the universal central extension of $G$.

In our context it is crucial to note that there is a
canonical split exact sequence
\begin{equation}
\label{eq:HH2}
1\longrightarrow \tilde{\HH}_2(\E(\Lambda ))\longrightarrow \tilde{\HH}_2(\GL (\Lambda ))\longrightarrow
\tilde{\HH}_2(\GL (\Lambda )^{\rm ab})\longrightarrow 1,
\end{equation}
cf.  \cite[Theorem 7]{Den76}. This facilitates the proof of the following result:
\begin{theorem}
\label{t:b0gl}
Let $\Lambda$ be a ring. Then $\Bt _0\Lambda$ is naturally isomorphic to $\Bt _0(\GL (\Lambda ))$.
\end{theorem}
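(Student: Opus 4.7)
The plan is to work with the variant group $\tilde{\HH}_2(G)$ and its subgroup $\tilde{\M}_0(G)$ introduced just before the theorem, exploiting the identity $\Bt_0(G) \cong \tilde{\HH}_2(G)/\tilde{\M}_0(G)$. The map to analyze is $\Bt_0(\iota) : \Bt_0(\E(\Lambda)) \to \Bt_0(\GL(\Lambda))$ induced by the inclusion $\iota : \E(\Lambda) \hookrightarrow \GL(\Lambda)$. The engine is the canonical split exact sequence \eqref{eq:HH2} together with two observations. First, $\Bt_0(\K_1\Lambda)=0$: since $\K_1\Lambda$ is abelian, every generator of $\K_1\Lambda \dwedge \K_1\Lambda$ has commuting entries, so $\tilde{\M}_0(\K_1\Lambda) = \tilde{\HH}_2(\K_1\Lambda)$. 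Second, every commuting pair in $\K_1\Lambda$ admits commuting lifts in $\GL(\Lambda)$.

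The second observation is the crucial geometric input. Given $a,b \in \K_1\Lambda$ with representatives $A_0, B_0 \in \GL(n,\Lambda)$, the matrix $B = P B_0 P^{-1}$, where $P \in \GL(2n,\Lambda)$ is the permutation swapping the first $n$ coordinates with the next $n$, is block-diagonal of the form $\mathrm{diag}(I_n, B_0)$. It commutes with $A := \mathrm{diag}(A_0, I_n)$ inside $\GL(2n,\Lambda) \subseteq \GL(\Lambda)$ and is conjugate to $B_0$, so it still lifts $b$. Thus $(A,B)$ is a commuting lift of $(a,b)$.

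For the surjectivity of $\Bt_0(\iota)$, I would take $\xi \in \tilde{\HH}_2(\GL(\Lambda))$ and write its image $\pi_*(\xi) \in \tilde{\HH}_2(\K_1\Lambda)$ as a product $\prod_i \bar a_i \dwedge \bar b_i$. Using the stabilization trick to lift each factor to a commuting pair $(A_i, B_i)$ in $\GL(\Lambda)$, one obtains $\omega = \prod_i A_i \dwedge B_i \in \tilde{\M}_0(\GL(\Lambda))$ with $\pi_*(\omega) = \pi_*(\xi)$. By exactness of \eqref{eq:HH2}, $\xi\omega^{-1}$ lies in $\iota_*(\tilde{\HH}_2(\E(\Lambda)))$, so modulo $\tilde{\M}_0(\GL(\Lambda))$ the class of $\xi$ is in the image of $\Bt_0(\iota)$.

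For injectivity, I would invoke the retraction $r : \tilde{\HH}_2(\GL(\Lambda)) \to \tilde{\HH}_2(\E(\Lambda))$ furnished by the splitting of \eqref{eq:HH2}, and show $r(\tilde{\M}_0(\GL(\Lambda))) \subseteq \tilde{\M}_0(\E(\Lambda))$. For a generator $A \dwedge B$ with $[A,B]=1$, choose commuting lifts $A', B'$ of $\bar A, \bar B$ by the block trick; then $A(A')^{-1}$ and $B(B')^{-1}$ lie in $\E(\Lambda)$. Expanding $A \dwedge B$ via the defining relations of $\dwedge$ and using $[A',B']=1$, one separates an "abelianized" piece $A' \dwedge B'$ matching the image in $\K_1\Lambda \dwedge \K_1\Lambda$ (hence killed by $r$) from correction factors involving one entry in $\E(\Lambda)$ that still commute with the other, which upon iterating land in $\iota_*(\tilde{\M}_0(\E(\Lambda)))$. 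The main obstacle is making this bookkeeping rigorous: the $\dwedge$-relations twist both entries under conjugation, and one must ensure the correction terms really come from $\tilde{\M}_0(\E(\Lambda))$ rather than just $\iota_*(\tilde{\HH}_2(\E(\Lambda)))$. I expect the cleanest path is to pin down the splitting of \eqref{eq:HH2} so that its section $s$ is explicitly realized by commuting lifts, thereby placing $s(\tilde{\HH}_2(\K_1\Lambda))$ inside $\tilde{\M}_0(\GL(\Lambda))$ by construction, after which injectivity drops out from the direct-sum decomposition $\tilde{\HH}_2(\GL(\Lambda)) = \iota_*(\tilde{\HH}_2(\E(\Lambda))) \oplus s(\tilde{\HH}_2(\K_1\Lambda))$.
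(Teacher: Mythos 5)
Your surjectivity half is sound: the block-diagonal trick $a\mapsto A_0\oplus 1$, $b\mapsto 1\oplus B_0$ does produce commuting lifts of any pair in $\K_1\Lambda$, and together with exactness of \eqref{eq:HH2} it shows that $\tilde{\M}_0(\GL(\Lambda))$ surjects onto $\tilde{\HH}_2(\K_1\Lambda)$, so $\Bt_0(\iota)$ is onto. This mirrors one half of the paper's argument (there it is the statement that the elements $x\circ y$ with $[x,y]=1$ generate $\tilde{\HH}_2(\GL(\Lambda)^{\rm ab})$). The gap is in injectivity, and your closing claim that it ``drops out from the direct-sum decomposition'' once $s(\tilde{\HH}_2(\K_1\Lambda))\subseteq\tilde{\M}_0(\GL(\Lambda))$ is not correct: that inclusion, together with $\iota_*\tilde{\M}_0(\E(\Lambda))\subseteq\tilde{\M}_0(\GL(\Lambda))$, only gives the containment $\tilde{\M}_0(\GL(\Lambda))\supseteq \iota_*\tilde{\M}_0(\E(\Lambda))\cdot s(\tilde{\HH}_2(\K_1\Lambda))$, which is what surjectivity already used. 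Injectivity needs the \emph{reverse} containment, equivalently $r(\tilde{\M}_0(\GL(\Lambda)))\subseteq\tilde{\M}_0(\E(\Lambda))$: for an \emph{arbitrary} commuting pair $A,B\in\GL(\Lambda)$ (not of block-diagonal shape), the $\E(\Lambda)$-component of $A\dwedge B$ under the splitting must be exhibited as a product of $C\dwedge D$ with $C,D\in\E(\Lambda)$ commuting, i.e.\ as a Milnor-type element of $\K_2\Lambda$. Your sketched bookkeeping with the correction terms $A(A')^{-1},B(B')^{-1}\in\E(\Lambda)$ is precisely this step, you acknowledge it is not carried out, and it is the crux of the theorem; note also that even defining the section $s$ by $\bar a\dwedge\bar b\mapsto (a\oplus 1)\dwedge(1\oplus b)$ requires proving this assignment is well defined and compatible with the $\dwedge$-relations, which is itself nontrivial.

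The paper closes exactly this gap by quoting Dennis and Grayson rather than by elementary manipulation: there is an explicit pairing $\star:\GL(\Lambda)\times\GL(\Lambda)\to\E(\Lambda)\dwedge\E(\Lambda)\cong\St(\Lambda)$ extending the Milnor pairing, inducing a homomorphism on $\GL(\Lambda)\dwedge\GL(\Lambda)$, and a pairing $\circ$ into $\tilde{\HH}_2(\GL(\Lambda)^{\rm ab})$, with the identity $x\dwedge y=(x\star y)(x\circ y)$. When $[x,y]=1$ the element $x\star y$ lands in $\K_2\Lambda$ as a Milnor element, i.e.\ in $\M_0\Lambda$, and this is what yields $\tilde{\M}_0(\GL(\Lambda))=\tilde{\M}_0(\E(\Lambda))\times\tilde{\HH}_2(\GL(\Lambda)^{\rm ab})$ and hence the isomorphism $\Bt_0\Lambda\cong\Bt_0(\GL(\Lambda))$. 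To repair your proof you would either have to import these results of Dennis and Grayson (at which point your argument essentially becomes the paper's), or supply an independent proof that the $\E(\Lambda)$-component of $A\dwedge B$ for commuting $A,B$ is generated by Milnor elements, which is not a routine computation with the $\dwedge$-relations.
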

\begin{proof}
Let $G=\GL (\Lambda )$ and $E=\E (\Lambda )$. By \cite[Theorem 7]{Den76} we have that $G\dwedge G$ is naturally isomorphic
to $(E\dwedge E)\times (G^{\rm ab}\dwedge G^{\rm ab})$. Explicitly, there is a pairing 
$\star :G\times G\to E\dwedge E\cong \St (\Lambda )$ that extends the Milnor pairing defined above. This was found by Grayson, see \cite{Gra79}
for the details.
It turns out \cite[p. 27]{Gra79} that the map $\star$ 
preserves the relations \eqref{eq:tens1}, \eqref{eq:tens2} and \eqref{eq:tens3a}, hence it induces a well defined homomorphism
$\star :G\dwedge G\to E\dwedge E$. We have that $G^{\rm ab}\dwedge G^{\rm ab}=\tilde{\HH}_2(G^{\rm ab})$, and the pairing
$\circ :G\times G\to \tilde{\HH}_2(G^{\rm ab})$ given by $a\circ b=(a\oplus 1)\gamma_2(G)\cwedge (1\oplus b)\gamma _2(G)$ induces
a homomorphism $\circ :G\dwedge G\to \tilde{\HH}^2(G^{\rm ab})$. It can be proved \cite{Den76} that $x\dwedge y=(x\star y)(x\circ y)$
for every $x,y\in G$. From the definition of $\star$ it follows that if $x$ and $y$ commute, then $x\star y\in \K_2\Lambda$, and the elements
$x\circ y$ generate  $\tilde{\HH}^2(G^{\rm ab})$. Therefore $\tilde{\M}_0(G)=\tilde{M}_0(E)\times \tilde{\HH}^2(G^{\rm ab})$. This gives
the result.
\end{proof}

One of the fundamental results in K-theory is that
if $\Lambda$ is a ring and $I$ an ideal of $\Lambda$, then the sequence
\begin{equation}
\label{eq:KI}
\xymatrix{
\K_2(\Lambda ,I)\ar[r] & \K_2\Lambda\ar[r]^\tau & \K_2(\Lambda /I)\ar[r]^\partial &
\K_1(\Lambda ,I)\ar[r] & \K_1\Lambda\ar[r] & \cdots
}
\end{equation}
is exact \cite[Theorem 6.2]{Mil71}. In the rest of the section we derive a similar sequence for $\Bt_0$.
To this end, denote ${\rm J}(\Lambda ,I)=\partial (\M_0(\Lambda /I))$ and 
${\rm T}(\Lambda ,I)=\tau ^{-1}(\M_0(\Lambda /I))$. Then we have the following.

\begin{proposition}
\label{p:Kexact}
Let $\Lambda$ be a ring and $I$ an ideal of $\Lambda$. Then the sequence
$$\xymatrix{
1\ar[r] & \frac{{\rm T}(\Lambda ,I)}{\M_0\Lambda}\ar[r] & \Bt_0\Lambda\ar[r] & \Bt_0(\Lambda /I)\ar[r] &
\frac{\K_1(\Lambda ,I)}{{\rm J}(\Lambda ,I)} \ar[r] & \K_1\Lambda\ar[r] & \cdots
%\K_1(\Lambda /I)\ar[r] & \K_0I\ar[r] &
%\K_0\Lambda\ar[r] & \K_0(\Lambda /I)
}
$$
is exact. 
\end{proposition}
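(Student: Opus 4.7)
The plan is to derive the exact sequence directly from the Milnor--Bass K-theory sequence \eqref{eq:KI} by reducing modulo the relevant subgroups of ``commuting-pair'' commutators. Using Theorem \ref{t:stein} to identify $\Bt_0\Lambda\cong \K_2\Lambda/\M_0\Lambda$ and $\Bt_0(\Lambda/I)\cong \K_2(\Lambda/I)/\M_0(\Lambda/I)$, the quotient homomorphisms at each stage of \eqref{eq:KI} should descend, or at least restrict, in a way that keeps the sequence exact.

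First I would check that the maps are well defined. The ring surjection $\Lambda\twoheadrightarrow \Lambda/I$ induces a group homomorphism $\St(\Lambda)\to\St(\Lambda/I)$ whose restriction to the centre is $\tau:\K_2\Lambda\to \K_2(\Lambda/I)$. Since group homomorphisms send commutators of commuting pairs to commutators of commuting pairs, one has $\tau(\M_0\Lambda)\subseteq \M_0(\Lambda/I)$, so $\tau$ descends to $\bar{\tau}:\Bt_0\Lambda\to\Bt_0(\Lambda/I)$. Next, the connecting map $\partial:\K_2(\Lambda/I)\to \K_1(\Lambda,I)$ descends to $\bar{\partial}:\Bt_0(\Lambda/I)\to \K_1(\Lambda,I)/{\rm J}(\Lambda,I)$ tautologically, since by definition ${\rm J}(\Lambda,I)=\partial(\M_0(\Lambda/I))$. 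Finally, the map $\K_1(\Lambda,I)/{\rm J}(\Lambda,I)\to \K_1\Lambda$ is induced from the original one because ${\rm J}(\Lambda,I)\subseteq \partial(\K_2(\Lambda/I))$ and this is killed by $\K_1(\Lambda,I)\to \K_1\Lambda$ by exactness of \eqref{eq:KI}.

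The bulk of the work is then a routine diagram chase for exactness at each position. At $\Bt_0\Lambda$: a class $x\M_0\Lambda$ lies in $\ker\bar{\tau}$ iff $\tau(x)\in\M_0(\Lambda/I)$, that is $x\in\tau^{-1}(\M_0(\Lambda/I))={\rm T}(\Lambda,I)$, giving $\ker\bar{\tau}={\rm T}(\Lambda,I)/\M_0\Lambda$. At $\Bt_0(\Lambda/I)$: given $y\in \K_2(\Lambda/I)$ with $\partial(y)\in {\rm J}(\Lambda,I)=\partial(\M_0(\Lambda/I))$, write $\partial(y)=\partial(m)$ with $m\in\M_0(\Lambda/I)$; then $y-m\in\ker\partial=\im\tau$ by \eqref{eq:KI}, so $y\M_0(\Lambda/I)\in\im\bar{\tau}$, and the reverse inclusion is immediate from $\partial\tau=0$. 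At $\K_1(\Lambda,I)/{\rm J}(\Lambda,I)$: a class $z{\rm J}(\Lambda,I)$ dies in $\K_1\Lambda$ iff $z\in\ker(\K_1(\Lambda,I)\to\K_1\Lambda)=\im\partial$, i.e.\ $z=\partial(y)$ for some $y\in \K_2(\Lambda/I)$, which is exactly the image of $\bar{\partial}$. Exactness of the tail at $\K_1\Lambda$ and beyond is identical to that of \eqref{eq:KI}, as the quotienting only affects the three terms above.

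The main obstacle, if any, is conceptual rather than technical: one must keep track of how ${\rm T}(\Lambda,I)$ and ${\rm J}(\Lambda,I)$ were defined so that the quotient maps and kernels line up correctly. Once the definitions are unwound the argument is essentially the snake-lemma-style observation that quotienting a middle term of a four-term exact sequence by a subgroup forces one to enlarge the preceding kernel term and shrink the following image term in precisely the prescribed manner.
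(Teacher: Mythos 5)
Your proposal is correct and follows essentially the same route as the paper: identify $\Bt_0$ via Theorem \ref{t:stein}, observe that $\tau$, $\partial$, $\sigma$ descend modulo $\M_0\Lambda$, ${\rm J}(\Lambda,I)$ and compute kernels and images against the exactness of \eqref{eq:KI}. The only cosmetic caveat is that the generators of $\M_0\Lambda$ are commutators $[x,y]$ in $\St(\Lambda)$ with $[\Phi(x),\Phi(y)]=1$ (so $x,y$ need not commute in $\St(\Lambda)$), but their images under $\St(\Lambda)\to\St(\Lambda/I)$ are still commutators lying in $\K_2(\Lambda/I)$, so your inclusion $\tau(\M_0\Lambda)\subseteq\M_0(\Lambda/I)$ stands.
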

\begin{proof}
The canonical homomorphism $\tau :\K_2\Lambda \to\K_2 (\Lambda /I)$ induces a homomorphism
$\tau ^\sharp :\Bt _0\Lambda\to\Bt_0 (\Lambda /I)$, whose kernel is precisely  ${\rm T}(\Lambda ,I)/\M_0\Lambda$.
By definition, the connecting homomorphism $\partial :\K_2(\Lambda /I)\to\K_1(\Lambda ,I)$ induces a natural map
$\partial^\sharp :\Bt_0(\Lambda /I)\to \K_1(\Lambda ,I)/{\rm J}(\Lambda ,I)$, and we have that 
$\ker\partial ^\sharp=\M_0(\Lambda /I)\ker\partial/\M_0(\Lambda /I)=\im\tau^\sharp$. Using the fact that the sequence
\eqref{eq:KI} is exact, we see that the canonical homomorphism $\sigma :\K_1(\Lambda ,I)\to\K_1\Lambda$ induces a
well defined homomorphism $\sigma ^\sharp :\K_1(\Lambda ,I)/{\rm J}(\Lambda ,I) \to \K_1\Lambda$. We have that 
$\im\sigma ^\sharp =\im\sigma$, and $\ker\sigma ^\sharp =\ker\sigma /{\rm J}(\Lambda ,I)=\im\partial /{\rm J}(\Lambda ,I)=\im\partial ^\sharp$.
This concludes the proof.
\end{proof}
%%%%%%%%%%%%%%%%%%%%%%%%%%%%%%%%%%%%%%%%%%%%%%%%%%%%%%%%%%%%%%%%%%%%%%%
\section{Computing $\Bt_0(G)$}
\label{s:comput}

\noindent 
A group $G$ is said to be {\it polycyclic} if it has a subnormal series
$1=G_0\triangleleft G_1\triangleleft\cdots\triangleleft G_n=G$ such that every factor $G_{i+1}/G_i$ is cyclic.
A finite group is polycyclic if and only if it is solvable. Computations with polycyclic groups are very efficient,
since several algorithmic problems are decidable within this class \cite{Sim94}.

Recently Eick and Nickel \cite{Eic08} developed efficient algorithms for computing 
nonabelian exterior squares and Schur multipliers of (possibly infinite) polycyclic groups.
Given a polycyclic group $G$, one can compute its nonabelian exterior square $G\wedge G$, the crossed pairing
$\lambda :G\times G\to G\wedge G$ given by $\lambda (x,y)=x\wedge y$, and the commutator map 
$\kappa:G\wedge G\to \gamma _2(G)$. The Schur multiplier $\HH_2(G,\mathbb{Z})$ is then computed as
$\M(G)=\ker\kappa$.

Let $G$ be a finite solvable group.
In order to compute $\Bt_0(G)$ it suffices to efficiently compute $\M_0(G)=\langle x\wedge y\mid x,y\in G, [x,y]=1\rangle$
as a subgroup of $\M(G)$. One would have to compute the set
$\mathcal{C}_G=\{ (x,y)\in G\times G\mid [x,y]=1\}$ of all commuting pairs in $G$ and then to compute $\M_0(G)$ as the group
generated by $\{\lambda (x,y)\mid (x,y)\in\mathcal{C}_G\}$. It turns out that this is computationally inefficient.
The first improvement is to observe that if $(x,y)\in\mathcal{C}_G$,
then also $({}^zx,{}^zy)\in\mathcal{C}_G$ for every $z\in G$. On the other hand, since $G$ acts trivially on $\M(G)$, we have that
${}^zx\wedge {}^zy={}^z(x\wedge y)=x\wedge y$,  
therefore it suffices to determine the conjugacy classes
$C_1,\ldots ,C_k$ and choose representatives $c_i\in C_i$, $i=1,\ldots ,k$. Then 
$\M_0(G)=\langle c_i\wedge x\mid c_i\in C_i, x\in C_G(c_i), i=1,\ldots ,k\rangle$. This can further be improved. For $x\in G$ consider the map
$\varphi :C_G(x)\to \ker\kappa$ given by
$y\mapsto x\wedge y$.
Let $y,z\in C_G(x)$. Then
$x\wedge yz=(x\wedge y)({}^yx\wedge {}^yz)=(x\wedge y)(x\wedge z)$, as $G$ acts trivially on $\ker\kappa$.
Thus $\varphi$ is a homomorphism. It follows from here that if $\mathcal{X}_i$ is a generating set of $C_G(c_i)$, $i=1,\ldots ,k$, then
$$\M_0(G)=\langle c_i\wedge x\mid c_i\in C_i, x\in \mathcal{X}_i, i=1,\ldots ,k\rangle .$$
This formula enables efficient computation of $\M_0(G)$, as it provides a reasonably small set of generators of this group. The
algorithm has been implemented in GAP \cite{GAP4}. 
It allows us to compute $\Bt_0(G)$ and $G\curlywedge G$ for finite solvable groups $G$.
A file of the GAP functions and commands for computing
$\Bt_0(G)$ can be found
at the author's website \cite{Mor10}.

Computer experiments reveal that there are no groups $G$ of order 32 with $\B_0(G)\neq 0$. This coincides with the hand calculations
done by Chu, Hu, Kang and Prokhorov \cite{Chu08}. Next, there are nine groups $G$ of order 64 with $\B_0(G)\neq 0$. If we denote
the $i$-th group in the GAP library of all groups of order $n$ by $G_n(i)$, then our computations using the above algorithm
show that we have $B(G_{64}(i))\neq 0$ for $i\in \{149,150,151,170,171,172,177,178,182\}$. In fact, in all these cases $\B_0(G_{64}(i))$ 
is isomorphic to $\Z/ 2\Z$. This confirms the calculations of Chu, Hu, Kang, and Kunyavski\u\i \ \cite[Theorem 10.8]{Chu09}.

Bogomolov \cite[Lemma 4.11]{Bog88} stated that if $G$ is a group with $G^{\rm ab}\cong \Z /p\Z\oplus \Z /p\Z$ and $\B_0(G)\neq 0$,
then $p>3$ and $|G|$ has to be at least $p^7$. His methods also imply that if $G$ is a $p$-group with $\B_0(G)\neq 0$, then $|G|\ge p^6$,
cf. \cite[Corollary 2.11]{Bog04}. On the other hand, our computations show that if $i\in\{ 28,29,30\}$, then 
$G_{243}(i)^{\rm ab}\cong  \Z /3\Z\oplus \Z /3\Z$ and $\B_0(G_{243}(i))\cong \Z /3\Z$. These can be double-checked by hand
calculations using the methods of \cite{Chu09}, and thus contradict both of the above Bogomolov's claims. We only sketch here
the relevant computations with the group $G_{243}(28)$.

\begin{example}
\label{e:243}
Denote $G=G_{243}(28)$. This group has the following polycyclic presentation:
\begin{multline*}
G=\langle g_1,g_2,g_3,g_4,g_5\mid g_1^3=1,\, g_2^3=g_4^2,\, g_3^3=g_5^2,\, g_4^3=1,\, g_5^3=1,\, [g_2,g_1]=g_3,\\
[g_3,g_1]=g_4,\, [g_3,g_2]=g_5,\, [g_4,g_1]=g_5,\, [g_i,g_j]=1\hbox{ for other } i>j\rangle .
\end{multline*}
Computations with GAP show that $G\wedge G$ is isomorphic to $G_{243}(34)$, and is generated by the set
$\{ g_2\wedge g_1,\, g_3\wedge g_1,\,
g_3\wedge g_2,\, g_4\wedge g_1\}$. Denote $w= (g_2\wedge g_3)(g_4\wedge g_1)$. We have that $|w|=9$, and
since $[g_2,g_3][g_4,g_1]=1$, it follows that $w\in\M (G)$. Further inspection of $G\wedge G$ reveals that 
$\M (G)=\langle w\rangle$ and $\M_0(G)\cong \langle w^3\rangle$, therefore
$\B_0(G)\cong \mathbb{Z}/3\mathbb{Z}$.
\end{example}

We have managed to find all solvable groups $G$ of order $\le 729$, apart from the orders 512, 576 and 640, with $\B_0(G)\neq 0$.
The numbers of such groups are given in Table \ref{tableB0}. As for the timings, it takes, for example, about seven seconds to compute
$\B_0(G)$ for a given group $G$ of order 729.
We note here that the algorithm works well even for reasonably larger solvable groups. For example, the free 2-generator Burnside group 
$B(2,4)$ of exponent 4 has order $2^{12}$, and our algorithm returns $\B_0(B(2,4))\cong \Z /2\Z$.

{\small
\begin{table}[h!tb]
\begin{tabular}{|c|c|c|}
\hline
$n$ & $\#$ of groups of order $n$ & $\#$ of $G$ with $\B_0(G)\neq 0$\\
\hline
64 & 267 & 9 \\
128 & 2328 & 230 \\
192 & 1543 & 54 \\
243 & 67 & 3 \\
256 &  56092 & 5953 \\
320 & 1640 & 54 \\
384 & 20169 & 1820\\
448 & 1396 & 54 \\
486 & 261 & 3 \\
704 & 1387 & 54 \\
729 & 504 & 85 \\
\hline
\end{tabular}
\caption{Numbers of groups $G$ with $\B_0(G)\neq 0$.}
\label{tableB0}
\end{table}
%%%%%%%%%%%%%%%%%%%%%%%%%%%%%%%%%%%%%%%%%%%%%%%%%%%%%%%%%%%%%%%%%%%%%%%
%%%%%%%%%%%%%%%%%%%%%%%%%%%%%%%%%%%%%%%%%%%%%%%%%%%%%%%%%%%%%%%%%%%%%%%

\end{document}